\renewcommand{\@seccntformat}[1]
{{\csname the#1\endcsname}.\hspace{0.3em}}
\renewcommand{\section}{\@startsection
{section}
{1}
{0mm}
{-1.5\baselineskip}
{\baselineskip}
{\bfseries\normalsize}}
\renewcommand{\subsection}{\@startsection
{subsection}
{2}
{0mm}
{-\baselineskip}
{0.5\baselineskip}
{\normalsize\itshape}}
\renewcommand{\subsubsection}{\@startsection
{subsubsection}
{3}
{0mm}
{-.5\baselineskip}
{-2mm}
{\normalsize\itshape}}
\theoremstyle{plain}
\newtheorem*{theorem*}{Theorem}
\newtheorem{theorem}{Theorem}[section]
\newtheorem{lemma}{Lemma}[section]
\newtheorem{corollary}[lemma]{Corollary}
\newtheorem{prop}[lemma]{Proposition}
\newtheorem*{corollary*}{Corollary}
\newtheorem*{tcvbis}{Theorem~\ref{t:cv}bis}
\theoremstyle{definition}
\newtheorem*{defin*}{Definition}
\newtheorem{defin}{Definition}[section]
\theoremstyle{remark}
\newtheorem*{quest*}{Question}
\DeclareMathAlphabet{\matheur}{U}{eur}{m}{n}
\DeclareMathAlphabet{\matheus}{U}{eus}{m}{n}
\DeclareMathAlphabet{\matheuf}{U}{euf}{m}{n}
\numberwithin{equation}{section}
\newcommand{\abs}[1]{\left\lvert#1\right\rvert}
\DeclareMathOperator{\grad}{grad}
\DeclareMathOperator{\Index}{Index}
\begin{document}

\author{Gerasim  Kokarev
\\ {\small\it School of Mathematics, The University of Leeds}
\\ {\small\it Leeds, LS2 9JT, United Kingdom}
\\ {\small\it Email: {\tt G.Kokarev@leeds.ac.uk}}
}

\title{Conformal volume and eigenvalue problems}
\date{}
\maketitle

\begin{abstract}
\noindent
We prove inequalities for Laplace eigenvalues of Riemannian manifolds generalising to higher eigenvalues two classical inequalities for the first Laplace eigenvalue -- the inequality in terms of the $L^2$-norm of mean curvature, due to Reilly in 1977, and the inequality in terms of conformal volume, due to Li and Yau in 1982, and El Soufi and Ilias in 1986. We also obtain bounds for the number of negative eigenvalues of Schr\"odinger operators, and in particular, index bounds for minimal hypersurfaces in spheres.
\end{abstract}

\medskip
\noindent
{\small
{\bf Mathematics Subject Classification (2010)}: 58J50, 35P15, 49Q05  

\noindent
{\bf Keywords}: Laplace eigenvalues, conformal volume, eigenvalue inequalities, minimal hypersurfaces.}

%
%
%


\section{Statements and discussion of results}
\label{intro}

\subsection{Introduction: classical inequalities for the first Laplace eigenvalue}
Let $(\Sigma^n,g)$ be a closed Riemannian manifold of dimension $n\geqslant 2$. In 1982 Li and Yau~\cite{LiYau82} introduced an important conformal invariant, the so-called $m$-dimensional {\em conformal volume} $V_c(m,\Sigma^n)$ of $\Sigma^n$.  It is defined as the infimum of the conformal volumes $V_c(m,\phi)$ of conformal immersions $\phi:\Sigma^n\to S^m$, where 
\begin{equation}
\label{cv:def}
V_c(m,\phi)=\sup\{\mathit{Vol}(\Sigma^n, (s\circ\phi)^*g_{\mathit{can}})~ |~ s\text{ is a conformal diffeomorphism of }S^m\},
\end{equation}
and $g_{\mathit{can}}$ denotes the canonical round metric on the unit sphere $S^m\subset\mathbb R^{m+1}$. A classical result by Li and Yau~\cite{LiYau82} in dimension $n=2$, and by El Soufi and Ilias~\cite{ESI86} in all dimensions gives the following  bound for the first Laplace eigenvalue of $(\Sigma^n,g)$ in terms of the conformal volume.
\begin{theorem}
\label{ly:theorem}
Let $(\Sigma^n,g)$ be a closed Riemannian manifold of dimension $n\geqslant 2$. Then for any integer $m>0$ such that the $m$-dimensional conformal volume of $\Sigma^n$ is defined, the first Laplace eigenvalue of $\Sigma^n$ satisfies the following inequality
\begin{equation}
\label{ly:1st}
\lambda_1(\Sigma^n,g)\mathit{Vol}_g(\Sigma^n)^{2/n}\leqslant n V_c(m,\Sigma^n)^{2/n}.
\end{equation}
Besides, the equality occurs if and only if after rescaling the metric $g$ the manifold $(\Sigma^n,g)$ admits an isometric minimal immersion into a sphere $S^m$ by first eigenfunctions.
\end{theorem}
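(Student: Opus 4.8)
The plan is to feed the coordinate functions of a suitably normalised conformal immersion $\phi\colon\Sigma^n\to S^m\subset\mathbb R^{m+1}$ into the variational (min--max) characterisation of the first eigenvalue.

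\textbf{Balancing.} Start from any conformal immersion $\phi=(\phi^1,\dots,\phi^{m+1})\colon\Sigma^n\to S^m$. To use the $\phi^\alpha$ as test functions for $\lambda_1(\Sigma,g)$ one needs $\int_\Sigma\phi^\alpha\,dv_g=0$ for every $\alpha$, and I would arrange this by a Hersch-type centre-of-mass argument. The conformal dilations of $S^m$ fixing a given pair of antipodal points form a family $\{s_a\}_{a\in B^{m+1}}$ indexed by the open unit ball, with $s_0=\mathrm{id}$ and $s_a\circ\phi\to a/\abs{a}$ pointwise almost everywhere as $a\to\partial B^{m+1}$ (using that the preimage of a point under an immersion has measure zero). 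Hence the centre-of-mass map
\[
C\colon B^{m+1}\to B^{m+1},\qquad C(a)=\frac{1}{\mathit{Vol}_g(\Sigma)}\int_\Sigma(s_a\circ\phi)\,dv_g,
\]
is continuous, takes values in the open ball (its image never lies on $S^m$, $s_a\circ\phi$ being non-constant), and extends continuously to $\overline{B^{m+1}}$ as the identity on the boundary. A no-retraction (Brouwer degree) argument then shows that $0$ lies in its range, i.e.\ there is a conformal diffeomorphism $s$ of $S^m$ with $\int_\Sigma(s\circ\phi)\,dv_g=0$. Since $s\circ\phi$ is again a conformal immersion into $S^m$ with $V_c(m,s\circ\phi)=V_c(m,\phi)$, we may assume $\phi$ itself is balanced.

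\textbf{Estimate.} For each $\alpha$, balancing and the variational characterisation of $\lambda_1$ give $\lambda_1(\Sigma,g)\int_\Sigma(\phi^\alpha)^2\,dv_g\le\int_\Sigma\abs{\nabla\phi^\alpha}^2\,dv_g$. Summing over $\alpha$ and using $\sum_\alpha(\phi^\alpha)^2\equiv1$ together with $\sum_\alpha\abs{\nabla\phi^\alpha}^2=\abs{d\phi}_g^2=\trace_g(\phi^*g_{\mathit{can}})=n\mu$, where $\phi^*g_{\mathit{can}}=\mu g$ by conformality, one obtains
\[
\lambda_1(\Sigma,g)\,\mathit{Vol}_g(\Sigma)\le n\int_\Sigma\mu\,dv_g.
\]
Hölder's inequality with exponents $n/2$ and $n/(n-2)$ (trivial if $n=2$) gives $\int_\Sigma\mu\,dv_g\le(\int_\Sigma\mu^{n/2}\,dv_g)^{2/n}\mathit{Vol}_g(\Sigma)^{1-2/n}$, while $\int_\Sigma\mu^{n/2}\,dv_g=\mathit{Vol}(\Sigma,\phi^*g_{\mathit{can}})\le V_c(m,\phi)$ by \eqref{cv:def}. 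Combining these and dividing by $\mathit{Vol}_g(\Sigma)^{1-2/n}$ yields $\lambda_1(\Sigma,g)\,\mathit{Vol}_g(\Sigma)^{2/n}\le nV_c(m,\phi)^{2/n}$, and taking the infimum over conformal immersions $\phi$ gives \eqref{ly:1st}.

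\textbf{Equality and main difficulty.} Equality in \eqref{ly:1st} forces, after passing to a minimising conformal immersion (a compactness argument as in \cite{ESI86}), equality in every step above: equality in Hölder makes $\mu$ constant, so $\phi^*g_{\mathit{can}}=cg$ and $\phi\colon(\Sigma,\tilde g)\to S^m$ is isometric for $\tilde g=cg$; equality in the variational inequality makes each $\phi^\alpha$ either $\equiv0$ or a first eigenfunction of $\Delta_{\tilde g}$, and not all vanish since $\sum_\alpha(\phi^\alpha)^2\equiv1$; hence $\Delta_{\tilde g}\phi=\lambda_1(\Sigma,\tilde g)\phi$, and with $\abs\phi\equiv1$ Takahashi's theorem identifies $\phi$ as a minimal isometric immersion of $(\Sigma,\tilde g)$ into $S^m$ by first eigenfunctions (necessarily $\lambda_1(\Sigma,\tilde g)=n$). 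Conversely, given such an immersion $\phi$ after rescaling $g$, running the argument with this $\phi$ makes each inequality sharp — the one nontrivial point being that conformal dilations do not increase the volume of a minimal submanifold of $S^m$, so $V_c(m,\phi)=\mathit{Vol}_{\tilde g}(\Sigma)$ — and \eqref{ly:1st} becomes an equality. I expect the balancing step to be the crux: constructing $\{s_a\}$, establishing the boundary behaviour of $C$ with enough uniformity, and justifying the degree argument; the remainder is the standard variational--Hölder computation, and the equality analysis is routine once Takahashi's theorem and the volume monotonicity under conformal dilation are available.
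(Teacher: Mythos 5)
The paper does not give its own proof of this theorem: it is stated as the classical result of Li--Yau and El Soufi--Ilias and cited from \cite{LiYau82,ESI86}. The only place where the paper touches on the argument is in the discussion preceding Theorem~\ref{t:cv}bis, where (for the weakly conformal extension) it invokes precisely the ingredients you use: Hersch's lemma to balance the push-forward measure, the coordinate functions $x^i\circ(s\circ\phi)$ as test functions for the Rayleigh quotient, and the fact that the push-forward has no atoms. Your reconstruction therefore matches the approach the paper implicitly relies on, and it is correct: the degree argument for the centre-of-mass map, the identity $\sum_\alpha\abs{\nabla\phi^\alpha}^2=n\mu$ for a conformal $\phi$, the H\"older step with exponents $n/2$ and $n/(n-2)$, and the bound $\int\mu^{n/2}\,dv_g\leqslant V_c(m,\phi)$ are all exactly the Li--Yau/El Soufi--Ilias computation, and your two uses of Takahashi's theorem and of the volume monotonicity of minimal submanifolds under conformal dilations (the latter being the content of the identity $V_c(m,\phi)=\mathit{Vol}_g(\Sigma^n)$ recalled in Section~\ref{intro}) are the right ingredients for the equality case.

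The one place where your write-up is genuinely thin is the forward direction of the equality analysis. The conformal volume $V_c(m,\Sigma^n)$ is an infimum that need not be attained, so one cannot literally ``pass to a minimising conformal immersion'' and read off equality in each line; one must run the chain of inequalities along a minimising sequence $\phi_j$, balance each $\phi_j$, and extract a limit of the balanced maps before Takahashi's theorem can be applied. You flag this as ``a compactness argument as in \cite{ESI86},'' which is the right citation, but in a self-contained proof this extraction is the technically delicate step (one needs uniform control on the balanced immersions, e.g.\ via the bound $\int\abs{\nabla\phi_j}^n\,dv_g\leqslant n^{n/2}V_c(m,\phi_j)$, to get weak $W^{1,n}$ compactness and show the limit is a nonconstant conformal map by first eigenfunctions). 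Everything else — the balancing, the estimate, the converse direction — is complete and correct.
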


The case of equality plays an important role in extremal eigenvalue problems, see~\cite{Na96, Ko14, FS3} and references there, and indicates on an intimate relationship between metrics maximising the left hand-side in~\eqref{ly:1st}, conformal volume, and minimal surfaces. In particular, it is known~\cite{ESI86} that if $(\Sigma^n,g)$ admits an isometric minimal immersion $\phi$ into $S^m$ by first eigenfunctions, then
$$
V_c(m,\Sigma^n)=V_c(m,\phi)=\mathit{Vol}_g(\Sigma^n).
$$
More generally, if $(\Sigma^n,g)$ admits an isometric minimal immersion $\phi$ into $S^m$, then $V_c(m,\Sigma^n)\leqslant\mathit{Vol}_g(\Sigma^n)$. Thus, inequality~\eqref{ly:1st} says that for all metrics $g$ conformal to the metric $g_{\Sigma}$ on a minimal submanifold $\Sigma^n\subset S^m$ we have
$$
\lambda_1(\Sigma^n,g)\mathit{Vol}_g(\Sigma^n)^{2/n}\leqslant n\mathit{Vol}_{g_\Sigma}(\Sigma^n)^{2/n}.
$$
In other words, the volume of a minimal submanifold controls eigenvalues of all conformal metrics on $\Sigma^n$. In particular, setting $g=g_{\Sigma}$ in the inequality above, we obtain $\lambda_1(\Sigma^n,g)\leqslant n$; this is a well-known observation due to Takahashi~\cite{Taka}.

As we show later, this circle of ideas is also closely related to the geometry of submanifolds. In particular, in dimension $2$ Theorem~\ref{ly:theorem} implies the bound on the first Laplace eigenvalue of a submanifold in the Euclidean space $\mathbb R^m$ in terms of the $L^2$-norm of the mean curvature. This bound is a partial case of the following classical inequality established by Reilly~\cite{Re77} in 1977. 
\begin{theorem}
\label{rei1:theorem}
Let $(\Sigma^n,g)$ be a closed Riemannian manifold of dimension $n\geqslant 2$. Then for any isometric immersion $\phi:\Sigma^n\to\mathbb R^m$ the following inequality holds
$$
\lambda_1(\Sigma^n,g)\leqslant\frac{n}{\mathit{Vol}_g(\Sigma^n)}\int_{\Sigma^n}\abs{H_\phi}^2d\mathit{Vol}_g,
$$
where $H_\phi$ is the mean curvature vector of an immersion $\phi$. When $n=m-1$ the equality above occurs if and only if $\Sigma^n$ is a sphere isometrically embedded into $\mathbb R^m$ as a hypersphere. When $n<m-1$ the equality occurs if and only if after scaling the metric $g$ and making a translation and dilation in the ambient space $\mathbb R^m$ the immersion $\phi$ is an isometric minimal immersion into a unit sphere $S^{m-1}\subset\mathbb R^m$ by first eigenfunctions.
\end{theorem}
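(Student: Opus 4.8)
The plan is to test the variational characterisation of $\lambda_1(\Sigma^n,g)$ against the coordinate functions of $\phi$ and then to bound the outcome by the $L^2$-norm of the mean curvature via an integration by parts and the Cauchy--Schwarz inequality. Write $\phi=(\phi^1,\dots,\phi^m)$ and translate $\phi$ in $\mathbb R^m$ so that $\int_{\Sigma^n}\phi^i\,d\mathit{Vol}_g=0$ for every $i$; this alters neither the induced metric $g$, nor the mean curvature vector $H_\phi$, nor $\mathit{Vol}_g(\Sigma^n)$, and since $\int_{\Sigma^n}H_\phi\,d\mathit{Vol}_g=0$ on a closed manifold it also leaves $\int_{\Sigma^n}\langle\phi,H_\phi\rangle\,d\mathit{Vol}_g$ unchanged. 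Each $\phi^i$ is now orthogonal in $L^2$ to the constants, so the variational characterisation gives $\lambda_1\int_{\Sigma^n}(\phi^i)^2\le\int_{\Sigma^n}\abs{\nabla\phi^i}^2$; summing over $i$ and using that $\phi$ is isometric, whence $\sum_i\abs{\nabla\phi^i}^2\equiv n$, one obtains
\[
\lambda_1(\Sigma^n,g)\int_{\Sigma^n}\abs{\phi}^2\,d\mathit{Vol}_g\le n\,\mathit{Vol}_g(\Sigma^n).
\]
It then remains to bound $\int_{\Sigma^n}\abs{\phi}^2$ from below by $\mathit{Vol}_g(\Sigma^n)^2/\int_{\Sigma^n}\abs{H_\phi}^2$.

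For this I would invoke the identity $\Delta_g\phi=-nH_\phi$ (applied componentwise, with $\Delta_g$ the non-negative Laplacian), which upon integration by parts yields $\int_{\Sigma^n}\langle\phi,H_\phi\rangle\,d\mathit{Vol}_g=-\tfrac1n\int_{\Sigma^n}\abs{\nabla\phi}^2\,d\mathit{Vol}_g=-\mathit{Vol}_g(\Sigma^n)$. Hence, by Cauchy--Schwarz,
\[
\mathit{Vol}_g(\Sigma^n)=\Bigl\lvert\int_{\Sigma^n}\langle\phi,H_\phi\rangle\,d\mathit{Vol}_g\Bigr\rvert\le\Bigl(\int_{\Sigma^n}\abs{\phi}^2\Bigr)^{1/2}\Bigl(\int_{\Sigma^n}\abs{H_\phi}^2\Bigr)^{1/2},
\]
and combining this with the previous display proves the inequality. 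I would also note that for $n=2$ the same bound follows from Theorem~\ref{ly:theorem} by composing $\phi$ with the conformal inverse stereographic projection $\mathbb R^m\hookrightarrow S^m$ and estimating the conformal volume; but the computation above is shorter and valid in all dimensions.

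For the equality statement I would trace back through the two inequalities used. Equality forces each centred coordinate function to be a first eigenfunction, i.e.\ $\Delta_g\phi=\lambda_1\phi$, equivalently $H_\phi=-(\lambda_1/n)\phi$; the Cauchy--Schwarz equality and the equality $\langle\phi,H_\phi\rangle=-\abs{\phi}\abs{H_\phi}$ are then automatic. Since $H_\phi$ is normal to $\Sigma^n$, so is the position vector $\phi$, whence $\abs{\phi}\equiv R$ is constant and $\Sigma^n$ lies in the round sphere $S^{m-1}(R)\subset\mathbb R^m$. Decomposing the mean curvature of $\Sigma^n\subset\mathbb R^m$ into the mean curvature of $\Sigma^n$ in $S^{m-1}(R)$ and the contribution $-R^{-2}\phi$ of $S^{m-1}(R)$ in $\mathbb R^m$, and comparing the components along $\phi$ in $H_\phi=-(\lambda_1/n)\phi$, one gets $R^2=n/\lambda_1$ together with the vanishing of the mean curvature of $\Sigma^n$ in $S^{m-1}(R)$. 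After rescaling $g$ so that $R=1$, this says exactly that $\phi$ is an isometric minimal immersion of $\Sigma^n$ into $S^{m-1}$ by first eigenfunctions, with $\lambda_1=n$. When $n=m-1$ such an immersion is totally geodesic, hence a Riemannian covering of $S^{m-1}$, hence an isometric embedding onto a hypersphere; when $n<m-1$ no such collapse occurs and one is left with the stated minimal immersion by first eigenfunctions. The converse implications follow by verifying that these configurations turn all the inequalities above into equalities.

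The step I expect to be the main obstacle is not the inequality, which is a short test-function computation, but the equality analysis: one has to treat the codimension-one and higher-codimension cases separately and carry out the decomposition of the second fundamental form of $\Sigma^n\subset\mathbb R^m$ through the intermediate sphere $S^{m-1}(R)$ while keeping careful track of the scaling factors. A small but necessary point along the way is the remark that centring the coordinate functions by a translation affects neither $H_\phi$ nor $\int_{\Sigma^n}\langle\phi,H_\phi\rangle\,d\mathit{Vol}_g$, which rests on $\int_{\Sigma^n}H_\phi\,d\mathit{Vol}_g=0$.
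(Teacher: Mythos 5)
Your proof is correct, and it reproduces the classical argument of Reilly \cite{Re77}: centre the coordinate functions, use them as test functions in the variational characterisation of $\lambda_1$ together with $\sum_i\abs{\nabla\phi^i}^2\equiv n$, then close the loop via $\Delta_g\phi=-nH_\phi$, integration by parts, and Cauchy--Schwarz, so that $\mathit{Vol}_g(\Sigma^n)\leqslant\left(\int\abs{\phi}^2\right)^{1/2}\left(\int\abs{H_\phi}^2\right)^{1/2}$. The equality analysis (each $\phi^i$ a first eigenfunction, hence $H_\phi=-(\lambda_1/n)\phi$; normality of $\phi$ gives $\abs{\phi}\equiv R$; the decomposition of $H_\phi$ through $S^{m-1}(R)$ forces minimality and $R^2=n/\lambda_1$; and the covering argument for $n=m-1$) is also carried out correctly.

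Note, however, that the paper does not prove Theorem~\ref{rei1:theorem} at all --- it is stated as a classical result and cited to \cite{Re77}. So there is no proof in the paper to compare against. What the paper does instead, for the purposes of its new results, is prove a generalisation to higher eigenvalues (Theorem~\ref{t:grei} in Section~\ref{proof:rei}) by an entirely different method: composing $\phi$ with a conformal immersion $\Pi:\mathbb R^m\to S^m$, applying the Grigor'yan--Netrusov--Yau decomposition into disjoint annuli on the sphere, and controlling the energies of cut-off test functions by $\int(\abs{H_\phi}^2+R_\phi)\,d\mathit{Vol}_g$ via Corollary~\ref{c:rei}. That machinery is necessary to reach $\lambda_k$ for $k>1$; your direct test-function argument is much shorter but is tied to $\lambda_1$, since it uses the coordinate functions themselves rather than localised cut-offs. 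You also correctly observe the alternative route for $n=2$ via Theorem~\ref{ly:theorem} and the bound $V_c(m,\Sigma^2)\leqslant\int\abs{H_\phi}^2\,d\mathit{Vol}_g$, which the paper records as inequality~\eqref{vc:mc}.
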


Mention that similar inequalities hold for isometric immersions into other simply connected constant curvature spaces as well, see~\cite{ESI92}. These results use essentially the fact that these constant curvature spaces admit conformal immersions into a sphere, see details in Section~\ref{proof:rei}.

The purpose of this paper is four-fold: first, we prove a version of Theorem~\ref{ly:theorem} for higher Laplace eigenvalues, showing that the conformal volume actually controls all Laplace eigenvalues. Second, we discuss the relationship with the Reilly inequality and extend the latter to higher Laplace eigenvalues of closed submanifolds in space forms. As an illustration of eigenvalue bounds via conformal volume we also discuss  inequalities for Laplace eigenvalues  on Riemannian surfaces, and prove a bound for the conformal volume of non-orientable surfaces, correcting a version of this statement in~\cite{LiYau82}. Finally, we sharpen known estimates for the number of negative eigenvalues of Schr\"odinger operators, and apply them to obtain index bounds for compact minimal  hypersurfaces $\Sigma^n$ in spheres, which appear to be new. Below we discuss our results in more detail.

\subsection{Conformal volume and higher Laplace eigenvalues}
For a given closed Riemannian manifold $(\Sigma^n,g)$ we denote by
$$
0=\lambda_0(\Sigma^n,g)<\lambda_1(\Sigma^n,g)\leqslant\lambda_2(\Sigma^n,g)\leqslant\ldots\leqslant\lambda_k(\Sigma^n,g)\leqslant\ldots
$$
its Laplace eigenvalues repeated with respect to multiplicity. Our first result is the following version of Theorem~\ref{ly:theorem} for all Laplace eigenvalues.
\begin{theorem}
\label{t:cv}
Let $(\Sigma^n,g)$ be a closed Riemannian manifold of dimension $n\geqslant 2$. Then for any integer $m>0$ such that the $m$-dimensional conformal volume of $\Sigma^n$ is well-defined, for any $k\geqslant 1$ the $k$th Laplace eigenvalue of $\Sigma^n$ satisfies the inequality
\begin{equation}
\label{cvest}
\lambda_k(\Sigma^n,g)\mathit{Vol}_g(\Sigma^n)^{2/n}\leqslant C(n,m)V_c(m,\Sigma^n)^{2/n}k^{2/n},
\end{equation}
where $C(n,m)$ is a constant that depends on the dimensions $n$ and $m$ only.
\end{theorem}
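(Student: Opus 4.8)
The plan is to carry out the variational argument of Li--Yau and El Soufi--Ilias behind Theorem~\ref{ly:theorem}, but to test $\lambda_k$ not against the $m+1$ coordinate functions of a conformal immersion but against $k+1$ localised ``bumps'' with pairwise disjoint supports. It suffices to produce functions $u_1,\dots,u_{k+1}\in H^1(\Sigma^n)$ with pairwise disjoint supports, each with Rayleigh quotient $\int_\Sigma\abs{\nabla u_i}^2\,d\mathit{Vol}_g/\int_\Sigma u_i^2\,d\mathit{Vol}_g$ at most $C(n,m)V_c(m,\Sigma^n)^{2/n}k^{2/n}\mathit{Vol}_g(\Sigma^n)^{-2/n}$: functions with disjoint supports are mutually $L^2$-orthogonal and span a $(k+1)$-dimensional subspace on which the Rayleigh quotient never exceeds the largest of those of the $u_i$, so the min-max principle then gives the same bound for $\lambda_k(\Sigma^n,g)$.

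Fix a conformal immersion $\phi\colon\Sigma^n\to S^m$ with $V_c(m,\phi)$ as close as we wish to $V_c(m,\Sigma^n)$, and let $\mu=\phi_*(d\mathit{Vol}_g)$ be the push-forward of the Riemannian volume, a finite Borel measure on $S^m$ of total mass $\mathit{Vol}_g(\Sigma^n)$. The combinatorial core of the proof is a decomposition lemma of the kind developed by Korevaar and by Grigor'yan--Netrusov--Yau: applied to $(S^m,g_{\mathit{can}},\mu)$ it produces, for every $k$, balls $B_1,\dots,B_{k+1}\subset S^m$ (or controlled annuli; I treat the ball case, the other being analogous) with pairwise disjoint enlargements $\widehat B_i\supset B_i$ for which
\begin{equation*}
\mu(B_i)\geqslant\frac{c_1(m)}{k}\,\mu(S^m)\qquad\text{and}\qquad\mu(\widehat B_i)\leqslant\frac{c_2(m)}{k}\,\mu(S^m).
\end{equation*}
Since a ball of \emph{any} radius is carried onto a fixed ball $B_\ast\subset S^m$ of radius $\delta(m)$ by a conformal (M\"obius) diffeomorphism, I choose for each $i$ such an $s_i$ with $s_i(B_i)=B_\ast$, arranged so that $s_i(\widehat B_i)$ contains a fixed slightly larger ball $\widehat B_\ast$; and I fix once and for all a Lipschitz cutoff $w\colon S^m\to[0,1]$ equal to $1$ on $B_\ast$, supported in $\widehat B_\ast$, with $\abs{\nabla_{g_{\mathit{can}}}w}\leqslant c_3(m)$.

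Now set $\psi_i=s_i\circ\phi$, a conformal immersion with conformal factor $\rho_i$ given by $\psi_i^*g_{\mathit{can}}=\rho_i^2g$, and put $u_i=w\circ\psi_i=w\circ s_i\circ\phi$ on $\Sigma^n$. Then $\supp u_i\subset\phi^{-1}(\widehat B_i)$, so the $u_i$ have pairwise disjoint supports (preimages of the disjoint sets $\widehat B_i$); this also takes care of $\phi$ being merely an immersion, since only disjoint subsets of $S^m$ are pulled back. Since $u_i=1$ on $\phi^{-1}(B_i)$ one has $\int_\Sigma u_i^2\,d\mathit{Vol}_g\geqslant\mathit{Vol}_g(\phi^{-1}(B_i))=\mu(B_i)\geqslant c_1(m)\mathit{Vol}_g(\Sigma^n)/k$, while conformality of $\psi_i$ gives $\abs{\nabla u_i}_g^2\leqslant\rho_i^2(\abs{\nabla_{g_{\mathit{can}}}w}^2\circ\psi_i)\leqslant c_3(m)^2\rho_i^2$, supported in $\phi^{-1}(\widehat B_i)$, so for $n\geqslant 3$ H\"older's inequality with exponents $n/2$ and $n/(n-2)$ gives
\begin{equation*}
\int_\Sigma\abs{\nabla u_i}^2\,d\mathit{Vol}_g\leqslant c_3(m)^2\Bigl(\int_\Sigma\rho_i^n\,d\mathit{Vol}_g\Bigr)^{2/n}\mathit{Vol}_g\bigl(\phi^{-1}(\widehat B_i)\bigr)^{(n-2)/n}.
\end{equation*}
Here $\int_\Sigma\rho_i^n\,d\mathit{Vol}_g=\mathit{Vol}(\Sigma^n,(s_i\circ\phi)^*g_{\mathit{can}})\leqslant V_c(m,\phi)$ by the very definition~\eqref{cv:def} of conformal volume — this is exactly where the M\"obius normalisation $s_i$ is legitimate — and $\mathit{Vol}_g(\phi^{-1}(\widehat B_i))=\mu(\widehat B_i)\leqslant c_2(m)\mathit{Vol}_g(\Sigma^n)/k$. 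Dividing, the Rayleigh quotient of each $u_i$ is at most $C(n,m)V_c(m,\phi)^{2/n}k^{2/n}\mathit{Vol}_g(\Sigma^n)^{-2/n}$ (for $n=2$ the H\"older step is trivial and $\int_\Sigma\rho_i^2\,d\mathit{Vol}_g\leqslant V_c(m,\phi)$ directly); by the reduction above this bounds $\lambda_k(\Sigma^n,g)$, and letting $V_c(m,\phi)\to V_c(m,\Sigma^n)$ gives~\eqref{cvest}, with the harmless factor $(k+1)^{2/n}/k^{2/n}$ absorbed into $C(n,m)$.

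The hard part is the decomposition lemma and its interplay with the conformal geometry. One needs the $k$ disjoint regions on $S^m$ to carry, at the same time, at least a definite fraction of $\mu$ on their inner ball (for the lower bound on $\int_\Sigma u_i^2$) and at most a definite fraction of $\mu$ on their full support (for the upper bound on $\int_\Sigma\abs{\nabla u_i}^2$), and it is precisely this two-sided control — the feature the Korevaar and Grigor'yan--Netrusov--Yau machinery is built to supply — that upgrades an easy bound linear in $k$ into the Weyl exponent $k^{2/n}$. One must also verify that a M\"obius transformation normalising a region of arbitrarily small (or degenerate) scale can be chosen so that the pulled-back cutoff has gradient $O(1)$ and the enlargements stay disjoint, while the total pulled-back spherical volume remains bounded by $V_c(m,\phi)$; this scale-invariance is exactly what the definition of conformal volume encodes, and it is what makes the estimate oblivious to the scales produced by the combinatorial lemma. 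The remaining ingredients — the conformality estimate for gradients, H\"older, and the min-max step — are routine.
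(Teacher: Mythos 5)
Your proposal is correct and follows essentially the same route as the paper's proof: both invoke the Grigor'yan--Netrusov--Yau covering decomposition (Proposition~\ref{ds1}) to produce $k+1$ disjoint regions on $S^m$ with two-sided mass control for $\mu=\phi_*\mathit{Vol}_g$, use a M\"obius normalisation so that the definition~\eqref{cv:def} of conformal volume bounds the $n$-energy of the composed map, and then combine H\"older's inequality with the min--max principle. The only surface difference is that the paper works with annuli and builds the cutoff explicitly as $x_p\circ\xi_{p,t}$, proving the crucial scale-invariance fact in Lemmas~\ref{l1} and~\ref{l2} (that the inner region always lands where $x_p\geqslant 3/5$), whereas you pull back a fixed cutoff $w$ through the M\"obius maps and flag that same verification as outstanding.
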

It is important to note that the inequality for Laplace eigenvalues in the theorem above is compatible with the Weyl asymptotic formula:
\begin{equation}
\label{weyl}
\lambda_k(\Sigma^n,g)\mathit{Vol}_g(\Sigma^n)^{2/n}\sim \frac{4\pi^2}{\omega_n^{2/n}}k^{2/n}\qquad\text{as}\quad k\to +\infty,
\end{equation}
where $\omega_n$ is the volume of a unit $n$-dimensional ball in the Euclidean space. Both volume $\mathit{Vol}_g(\Sigma^n)$ and the index $k$ appear in inequality~\eqref{cvest} with the same power as in the Weyl law. As a consequence of Theorem~\ref{t:cv} we obtain the following corollary.
\begin{corollary}
\label{cor:cebm}
Let $(\Sigma^n,g_\Sigma)$ be a minimally immersed closed submanifold of a unit sphere $S^m\subset\mathbb R^{m+1}$. Then for any metric $g$ conformal to $g_\Sigma$ the following inequality holds
\begin{equation}
\label{cb:kth}
\lambda_k(\Sigma^n,g)\mathit{Vol}_g(\Sigma^n)^{2/n}\leqslant C(n,m)\mathit{Vol}_{g_\Sigma}(\Sigma^n)^{2/n}k^{2/n}
\end{equation}
for any $k\geqslant 1$, where $C(n,m)$ is a constant that depends on $n$ and $m$ only.
\end{corollary}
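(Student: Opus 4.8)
The plan is to obtain Corollary~\ref{cor:cebm} as an immediate consequence of Theorem~\ref{t:cv}, using only the two elementary properties of the conformal volume recalled in the introduction. The first observation I would make is that $V_c(m,\cdot)$ is an invariant of the conformal class: by the very definition~\eqref{cv:def}, the conformal volume of an immersion depends on the conformal structure alone, hence so does the infimum $V_c(m,\Sigma^n)$. In particular, if $g$ is conformal to the induced metric $g_\Sigma$, then $\Sigma^n$ equipped with $g$ has the same $m$-dimensional conformal volume $V_c(m,\Sigma^n)$, and this quantity is well-defined because the inclusion $\iota\colon\Sigma^n\hookrightarrow S^m$ is itself a conformal (indeed $g_\Sigma$-isometric) immersion into the round sphere. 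Thus Theorem~\ref{t:cv} applies to every metric $g$ in the conformal class of $g_\Sigma$ and yields $\lambda_k(\Sigma^n,g)\mathit{Vol}_g(\Sigma^n)^{2/n}\leqslant C(n,m)V_c(m,\Sigma^n)^{2/n}k^{2/n}$.

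The second ingredient I would use is the bound $V_c(m,\Sigma^n)\leqslant\mathit{Vol}_{g_\Sigma}(\Sigma^n)$, which holds for any closed submanifold $\Sigma^n$ minimally immersed in the unit sphere $S^m$; this is the El Soufi--Ilias estimate quoted above, and it follows because for the minimal immersion $\iota$ the supremum over conformal diffeomorphisms of $S^m$ in~\eqref{cv:def} is attained at the identity, so that $V_c(m,\Sigma^n)\leqslant V_c(m,\iota)=\mathit{Vol}_{g_\Sigma}(\Sigma^n)$. Substituting this into the inequality from the previous paragraph gives
\[
\lambda_k(\Sigma^n,g)\mathit{Vol}_g(\Sigma^n)^{2/n}\leqslant C(n,m)V_c(m,\Sigma^n)^{2/n}k^{2/n}\leqslant C(n,m)\mathit{Vol}_{g_\Sigma}(\Sigma^n)^{2/n}k^{2/n},
\]
which is precisely~\eqref{cb:kth}, with the same constant $C(n,m)$ as in Theorem~\ref{t:cv}.

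I expect no genuine obstacle here: all the analytic content is packed into Theorem~\ref{t:cv}, and what remains is the book-keeping of checking that passing from $g_\Sigma$ to a conformal metric $g$ leaves $V_c(m,\Sigma^n)$ unchanged and that this conformal invariant is dominated by the volume of the minimal submanifold. I would also remark that minimality of $\Sigma^n$ enters only through this last volume bound, so the same argument in fact gives~\eqref{cb:kth} with $\mathit{Vol}_{g_\Sigma}(\Sigma^n)$ replaced by $V_c(m,\phi)$ for any conformal immersion $\phi\colon\Sigma^n\to S^m$ realising the conformal class of $g$; in the minimal case $V_c(m,\phi)$ coincides with the induced volume, which is how the stated form is recovered.
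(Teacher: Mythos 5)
Your proposal is correct and follows exactly the route the paper intends: apply Theorem~\ref{t:cv} to any metric $g$ in the conformal class of $g_\Sigma$ (using that $V_c(m,\cdot)$ is a conformal invariant and that the inclusion into $S^m$ makes it well-defined), then invoke the El Soufi--Ilias inequality $V_c(m,\Sigma^n)\leqslant\mathit{Vol}_{g_\Sigma}(\Sigma^n)$ for minimally immersed submanifolds, which the paper quotes in the introduction. The only minor point is that the parenthetical justification that the supremum in $V_c(m,\iota)$ is attained at the identity is a slightly stronger fact from~\cite{ESI86} than what the argument actually needs; the stated inequality $V_c(m,\Sigma^n)\leqslant\mathit{Vol}_{g_\Sigma}(\Sigma^n)$ already suffices.
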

In~\cite{Ko18-} we prove a stronger version of Corollary~\ref{cor:cebm}, where the constant $C(n,m)$ can be chosen to be independent on the ambient dimension $m$. Now we compare Theorem~\ref{t:cv} with other eigenvalue bounds known in the literature. Recall that a celebrated result by Korevaar~\cite{Kor} says that for any closed Riemannian manifold $(\Sigma^n,g)$ its Laplace eigenvalues satisfy the inequalities
\begin{equation}
\label{korevaar}
\lambda_k(\Sigma^n,g)\mathit{Vol}_g(\Sigma^n)^{2/n}\leqslant Ck^{2/n},
\end{equation}
where the constant $C$ depends on the conformal class of a metric $g$ in a rather implicit way. Thus, Theorem~\ref{t:cv} can be viewed as an improvement of Korevaar's result -- it clarifies the way the right hand-side in~\eqref{korevaar} depends on geometry and relates it to a well-known conformal invariant. 

The proof of Theorem~\ref{t:cv} is based on the results originating from the work of Korevaar~\cite{Kor}, and developed further in the papers~\cite{GY99,GNY}; for other developments and applications see~\cite{AH,Ko13,HaKo}. The novelty of our argument is a new construction of test-functions, which relies on specific properties of certain conformal diffeomorphisms of a sphere $S^m$. A version of this construction for complex projective spaces is used in~\cite{Ko18}, where we obtain homological bounds for all Laplace eigenvalues of K\"ahler metrics. Throughout  the paper we assume that $\Sigma^n$ is a closed manifold. However, most of the results continue to hold for compact manifolds with boundary if we impose the Neumann conditions on the boundary.

\subsection{A version of the Reilly inequality for higher eigenvalues}
Now we discuss the Reilly inequality for isometric immersions $\phi:(\Sigma^n,g)\to (\mathbb R^m,g_{\mathit{can}})$, stated in Theorem~\ref{rei1:theorem}. It has been an open question, communicated to us by El Soufi in 2013,  whether there are versions of this inequality for higher Laplace eigenvalues. The only known folkloric result is the following estimate in terms of the $L^\infty$-norm of mean curvature:
$$
\lambda_k(\Sigma^n,g)\leqslant C(n)\max\abs{H_\phi}k^{2/n}
$$
for any $k\geqslant 1$. It is a consequence of the universal inequalities in~\cite[Theorem~2.1]{EHI09} and the recursion formula~\cite[Corollary~2.1]{CY07}. To our knowledge no estimates for higher Laplace eigenvalues $\lambda_k(\Sigma^n,g)$ in terms of the $L^2$-norm of $H_\phi$ exist in the literature.

The analysis of equality cases in Theorems~\ref{ly:theorem} and~\ref{rei1:theorem} suggests that there might be some relationship between the inequalities in these theorems. In dimension $n=2$ this is indeed the case. More precisely, in~\cite[Lemma~1]{LiYau82} Li and Yau show that for surfaces in the Euclidean space $\mathbb R^m$ the following inequality holds
\begin{equation}
\label{vc:mc}
V_c(m,\Sigma^2)\leqslant\int_{\Sigma^2}\abs{H_\phi}^2\mathit{dVol}_g,
\end{equation}
see also~\cite{ESI92}. Combining it with Theorem~\ref{t:cv}, we obtain the following result.
\begin{corollary}
\label{cor:rei}
Let $(\Sigma^2,g_\Sigma)$ be a closed Riemannian surface, and $\phi:(\Sigma^2,g_\Sigma)\to (\mathbb R^m,g_{\mathit{can}})$ be an isometric immersion. Then for any metric $g$ conformal to $g_\Sigma$ the Laplace eigenvalues satisfy the following inequalities
$$
\lambda_k(\Sigma^2,g)\mathit{Vol}_g(\Sigma^2)\leqslant C(m)\left(\int_{\Sigma^2}\abs{H_\phi}^2\mathit{dVol}_{g_\Sigma}\right)k
$$
for any $k\geqslant 1$, where $C(m)$ is a constant that depends on $m$ only.
\end{corollary}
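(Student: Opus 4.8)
The plan is to combine Theorem~\ref{t:cv}, specialised to $n=2$, with the Li--Yau inequality~\eqref{vc:mc} quoted in the excerpt. First I would verify that the hypotheses of Theorem~\ref{t:cv} are in force, that is, that the $m$-dimensional conformal volume $V_c(m,\Sigma^2)$ is well-defined. Since $\phi\colon(\Sigma^2,g_\Sigma)\to(\mathbb R^m,g_{\mathit{can}})$ is an isometric, hence conformal, immersion, and the inverse stereographic projection $\mathbb R^m\hookrightarrow S^m$ is a conformal embedding, their composition is a conformal immersion $\Sigma^2\to S^m$; therefore $V_c(m,\Sigma^2)$ is defined. Applying Theorem~\ref{t:cv} with $n=2$ then gives, for every metric $g$ conformal to $g_\Sigma$ and every $k\geqslant 1$,
\begin{equation*}
\lambda_k(\Sigma^2,g)\mathit{Vol}_g(\Sigma^2)\leqslant C(2,m)\,V_c(m,\Sigma^2)\,k.
\end{equation*}

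Next I would exploit that the conformal volume is a conformal invariant, so that $V_c(m,\Sigma^2)$ depends only on the conformal class $[g]=[g_\Sigma]$ and may be estimated using the representative $g_\Sigma$ together with the immersion $\phi$. Inequality~\eqref{vc:mc} states precisely that
\begin{equation*}
V_c(m,\Sigma^2)\leqslant\int_{\Sigma^2}\abs{H_\phi}^2\mathit{dVol}_{g_\Sigma}.
\end{equation*}
Substituting this bound into the previous display and putting $C(m)=C(2,m)$ yields the claimed inequality.

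There is no deep obstacle here: the statement is a formal consequence of Theorem~\ref{t:cv} and~\eqref{vc:mc}. The only point deserving a moment's care is the reduction step — one must note that the quantity $V_c(m,\Sigma^2)$ occurring in Theorem~\ref{t:cv} is insensitive to the choice of conformal representative, so that the right-hand side of~\eqref{vc:mc}, written in terms of $g_\Sigma$ and $\phi$, may be inserted even though the eigenvalue estimate is being applied to the arbitrary conformal metric $g$. One could also remark that the same scheme, with~\eqref{vc:mc} replaced by its counterpart for other simply connected space forms (cf.\ the remarks following Theorem~\ref{rei1:theorem} and~\cite{ESI92}), produces an analogous bound for surfaces isometrically immersed into spheres and hyperbolic spaces.
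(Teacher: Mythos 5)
Your argument is correct and follows exactly the route the paper takes: apply Theorem~\ref{t:cv} with $n=2$ to get $\lambda_k\mathit{Vol}_g\leqslant C(2,m)V_c(m,\Sigma^2)k$, then bound $V_c(m,\Sigma^2)$ by the $L^2$-norm of the mean curvature via the Li--Yau inequality~\eqref{vc:mc}. The two extra remarks you make --- that composition with inverse stereographic projection shows $V_c(m,\Sigma^2)$ is defined, and that the conformal invariance of $V_c$ licenses estimating it with the representative $g_\Sigma$ while applying the eigenvalue bound to an arbitrary conformal metric $g$ --- are both correct and are precisely the points the paper leaves implicit.
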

When the dimension $n$ of $\Sigma^n$ is greater than $2$ relation~\eqref{vc:mc} fails to hold. Indeed, in this case the right hand-side in~\eqref{vc:mc} is not invariant under conformal transformations of the ambient space $\mathbb R^m$, and can be made arbitrarily small, while the left hand-side is a genuine conformal invariant. Nevertheless, 
for manifolds of arbitrary dimension we are able to prove the following statement.
\begin{theorem}
\label{t:rei}
Let $(\Sigma^n,g)$ be a closed Riemannian manifold of dimension $n\geqslant 2$, and $\phi:(\Sigma^n,g)\to (\mathbb R^m, g_{\mathit{can}})$ be an isometric immersion. Then for any $k\geqslant 1$ the $k$th Laplace eigenvalue of $\Sigma^n$ satisfies the inequality
$$
\lambda_k(\Sigma^n,g)\leqslant C(n,m)\left(\frac{1}{\mathit{Vol}_g(\Sigma^n)}\int_{\Sigma^n}\abs{H_\phi}^2\mathit{dVol}_{g}\right)k,
$$
where $H_\phi$ is the mean curvature vector, and $C(n,m)$ is a constant that depends on the dimensions $n$ and $m$ only.
\end{theorem}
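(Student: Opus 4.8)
The plan is to run a concentration--compactness argument of the Korevaar and Grigor'yan--Netrusov--Yau type directly on the Euclidean image of $\Sigma^n$, using a monotonicity-type estimate in place of the volume growth bound such arguments normally need. By the variational characterisation of eigenvalues it suffices, for each $k\geqslant 1$, to produce functions $f_0,\dots,f_k\in W^{1,2}(\Sigma^n)$ with pairwise disjoint supports, each of which has Rayleigh quotient
\[
\frac{\int_{\Sigma^n}\abs{\nabla f_i}^2\,d\mathit{Vol}_g}{\int_{\Sigma^n}f_i^2\,d\mathit{Vol}_g}\ \leqslant\ C(n,m)\left(\frac{1}{\mathit{Vol}_g(\Sigma^n)}\int_{\Sigma^n}\abs{H_\phi}^2\,d\mathit{Vol}_g\right)k .
\]
I would look for the $f_i$ in the form $f_i=u_i\circ\phi$ with $u_i$ Lipschitz on $\mathbb R^m$: since $\phi$ is isometric one has $\abs{\nabla_g(u\circ\phi)}\leqslant\abs{\nabla u}\circ\phi$ pointwise, so, writing $\mu=\phi_*(d\mathit{Vol}_g)$ for the pushforward of the volume measure to $\mathbb R^m$ (a finite Borel measure of total mass $\mathit{Vol}_g(\Sigma^n)$, without atoms since $\phi$ is an immersion), the Rayleigh quotient of $u\circ\phi$ on $\Sigma^n$ is at most that of $u$ with respect to $\mu$, and disjointness of the supports is inherited by the $u_i\circ\phi$. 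The problem is thus reduced to the metric measure space $(\mathbb R^m,\abs{\cdot},\mu)$.

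The crucial point is that, although no Euclidean volume growth is available in this setting (the $L^2$ norm of $H_\phi$ does not control $\mu(B_\rho(a))\rho^{-n}$ when $n\geqslant 3$), one does have the following $L^2$ analogue of the monotonicity formula: for all $a\in\mathbb R^m$ and $\rho>0$,
\[
\mu\bigl(B_\rho(a)\bigr)\ \leqslant\ \frac{n^2}{(n-2)^2}\,\rho^2\int_{\Sigma^n}\abs{H_\phi}^2\,d\mathit{Vol}_g .
\]
To prove this, fix $a$, put $\theta(\rho)=\mu(B_\rho(a))$, and apply the divergence theorem on submanifolds to the position vector field $Y=\phi-a$ on $\Omega_\rho=\phi^{-1}(B_\rho(a))$, in the form $\int_{\Omega_\rho}\mathrm{div}_\Sigma Y=\int_{\partial\Omega_\rho}\langle Y,\eta\rangle-n\int_{\Omega_\rho}\langle Y,H_\phi\rangle$ (the last term because $Y$ need not be tangent), together with $\mathrm{div}_\Sigma(\phi-a)=n$. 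Using $\abs{\phi-a}\leqslant\rho$ on $\Omega_\rho$, the fact that $\abs{d\phi(\eta)}=1$ along $\partial\Omega_\rho$ (isometry of $\phi$), and Cauchy--Schwarz on $\int_{\Omega_\rho}\langle H_\phi,\phi-a\rangle$, one gets for a.e.\ $\rho$
\[
n\,\theta(\rho)\ \leqslant\ n\rho\,\norm{H_\phi}_{L^2(\Sigma^n)}\,\theta(\rho)^{1/2}\ +\ \rho\,\mathcal H^{n-1}_g\!\bigl(\phi^{-1}(\partial B_\rho(a))\bigr).
\]
Since $\abs{\phi-a}$ is $1$-Lipschitz on $\Sigma^n$, the coarea formula gives $\mathcal H^{n-1}_g(\phi^{-1}(\partial B_\rho(a)))\leqslant\theta'(\rho)$; inserting this and dividing by $\rho^{n+1}$ recasts the inequality as $\frac{d}{d\rho}\bigl(2(\rho^{-n}\theta(\rho))^{1/2}\bigr)\geqslant-n\,\norm{H_\phi}_{L^2(\Sigma^n)}\,\rho^{-n/2}$, which upon integration from $\rho$ to $\infty$ --- legitimate because $\rho^{-n}\theta(\rho)\to 0$ and $n\geqslant3$ --- yields the displayed estimate. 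The blow-up of the constant as $n\downarrow2$ matches the fact that the two-dimensional statement is of a different nature and is obtained through the conformal volume, cf.\ Corollary~\ref{cor:rei}.

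With the quadratic bound $\mu(B_\rho(a))\leqslant\Lambda\rho^2$, where $\Lambda=\frac{n^2}{(n-2)^2}\int_{\Sigma^n}\abs{H_\phi}^2\,d\mathit{Vol}_g$, in hand, the proof is completed by a Grigor'yan--Netrusov--Yau type decomposition, as in the proof of Theorem~\ref{t:cv}. One applies the covering/decomposition lemma to $(\mathbb R^m,\abs{\cdot},\mu)$ to obtain $k+1$ pairwise disjoint annular capacitors $(A_i,\widehat A_i)$ with $\mu(A_i)\geqslant c(m)\,\mathit{Vol}_g(\Sigma^n)/k$, takes $u_i$ to be the standard piecewise linear cut-off of the Euclidean distance to the centre of $A_i$ (equal to $1$ on $A_i$ and supported in $\widehat A_i$), and uses the quadratic growth bound on the transition shells of $\widehat A_i$ to estimate the $\mu$-Rayleigh quotient of $u_i$ by $C(m)\Lambda k/\mathit{Vol}_g(\Sigma^n)$; then $u_i\circ\phi$ are the required test functions. (After rescaling $\phi$ so that $\int_{\Sigma^n}\abs{H_\phi}^2\,d\mathit{Vol}_g=\mathit{Vol}_g(\Sigma^n)$, which multiplies both sides of the asserted inequality by the same factor, this step reads simply: unit-density quadratic volume growth plus the decomposition lemma give $\lambda_k(\Sigma^n,g)\leqslant C(n,m)k$.) Since the relevant volume growth has exponent $2$ rather than $n$, the decomposition machinery here produces a bound linear in $k$, not $k^{2/n}$.

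The step I expect to be the main obstacle is precisely this decomposition: arranging that the capacitors are at once heavy, mutually disjoint after dilation, and matched to a growth bound of exponent $2$ (rather than $n$) with the correct power of $k$ --- essentially, carrying the Grigor'yan--Netrusov--Yau bookkeeping through the ``quadratic'' regime. The monotonicity estimate itself, though it is the new geometric ingredient, is comparatively robust; the only points needing care are non-regular values of $\abs{\phi-a}$ and the passage to the limit $\rho\to\infty$, which are routine.
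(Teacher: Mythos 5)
Your proof is essentially correct (for $n\geqslant 3$, with $n=2$ deferred to Corollary~\ref{cor:rei}), but it takes a genuinely different route from the paper. The paper proves the more general Theorem~\ref{t:grei}: it chooses a conformal immersion $\Pi$ of the target into the round sphere $S^m$, pushes $\mathit{Vol}_g$ forward to $S^m$ along $\Pi\circ\phi$, applies the Grigor'yan--Netrusov--Yau annulus decomposition on the \emph{compact} sphere, and then bounds the Dirichlet energy of the resulting test functions by $4n\int(\abs{H_\phi}^2+R_\phi)\,d\mathit{Vol}_g$ using the El~Soufi--Ilias conformal identity (Proposition~\ref{f:esi}/Corollary~\ref{c:rei}), which relates $\abs{H_\phi}^2+R_\phi$ to the energy density of $\Pi\circ\phi$ after an arbitrary M\"obius transformation. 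You instead work directly in $\mathbb R^m$: your new ingredient is the quadratic volume-growth estimate $\mu(B_\rho(a))\leqslant\frac{n^2}{(n-2)^2}\rho^2\int\abs{H_\phi}^2$, obtained from the tangential divergence identity $\operatorname{div}_\Sigma(\phi-a)^T=n+n\langle\phi-a,H_\phi\rangle$, Cauchy--Schwarz, the coarea inequality, and integration of the resulting ODE from $\rho$ to $\infty$; this plays the role that Corollary~\ref{c:rei} plays in the paper, delivering a \emph{uniform} (annulus-independent) bound on the $\mu$-Dirichlet energy of the GNY cut-offs, whence a bound linear in $k$. Each approach has its merits: the paper's avoids any growth hypothesis, treats $n=2$ on the same footing, and yields the more general statement for ambient spaces conformal to $S^m$ (in particular all constant-curvature space forms with the extra $R_\phi$ term); yours is more elementary and self-contained (no appeal to [ESI92]) and makes transparent \emph{why} the exponent is $k$ rather than $k^{2/n}$ --- the available growth is quadratic, not $n$-dimensional --- at the cost of losing the case $n=2$ (constant blows up) and the non-Euclidean targets. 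One minor presentational point: in writing $\Lambda=\frac{n^2}{(n-2)^2}\int\abs{H_\phi}^2$ and claiming $\mu(B_\rho(a))\leqslant\Lambda\rho^2$ you should make explicit that the decomposition lemma (Proposition~\ref{ds1}) applies to $(\mathbb R^m,\abs{\cdot})$ since it has the global $N(m)$-covering property and precompact balls, and that the integration to $\infty$ is legitimate because $\theta(\rho)\leqslant\mathit{Vol}_g(\Sigma^n)<\infty$ forces $\rho^{-n}\theta(\rho)\to0$.
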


Theorem~\ref{t:rei} is a consequence of a more general result that we prove in Section~\ref{proof:rei}, and in particular, holds for isometric immersions into constant curvature spaces. Note that the inequality in the theorem above is compatible with Weyl's law~\eqref{weyl} only in dimension $n=2$. It would be interesting to know whether it can be improved to the following statement: under the hypotheses of Theorem~\ref{t:rei}, there exists a constant $C(n)$, depending on the dimension $n$ only, such that
$$
\lambda_k(\Sigma^n,g)\leqslant C(n)\left(\frac{1}{\mathit{Vol}_g(\Sigma^n)}\int_{\Sigma^n}\abs{H_\phi}^2\mathit{dVol}_{g}\right)k^{2/n}
$$
for any $k\geqslant 1$.

\subsection{Weakly conformal immersions, conformal volume, and Yau's problem}
Our next aim is to discuss bounds for the conformal volume, demonstrating some of the applications of Theorems~\ref{ly:theorem} and~\ref{t:cv}. We start with noting that the conformal volume behaves naturally with respect to conformal maps: if $\varphi:\Sigma^n\to M^n$ is a conformal immersion of degree $d$, then
$$
V_c(m,\Sigma^n)\leqslant\abs{d}V_c(m,M^n)
$$
for any $m>0$ such that the conformal volume of $M^n$ is well-defined. Thus, Theorems~\ref{ly:theorem} and~\ref{t:cv} give eigenvalue bounds in terms of degrees of conformal immersions onto model spaces. When $\Sigma^2$ is a closed orientable surface of positive genus, there are no conformal immersions $\Sigma^2\to S^2$, while there are plenty of branched conformal immersions. For this reason in~\cite{LiYau82} Li and Yau consider the notion of the conformal volume defined as the infimum of the conformal volumes of {\em branched conformal immersions}, and show that Theorem~\ref{ly:theorem} continues to hold for it. For our purposes it will be convenient to consider even more general class of maps, described in the following definition.
\begin{defin}
\label{def:wci}
A Lipschitz map $\phi:(\Sigma^n,g)\to (M^m,h)$ is called a {\em weakly conformal immersion} if there exists a closed set $T\subset\Sigma^n$ of zero Lebesgue measure such that the restriction $\left.\phi\right|(\Sigma^n\backslash T)$ is a smooth conformal immersion. The set $T$ is called the {\em singular set} of a weakly conformal immersion $\phi$.
\end{defin}
For a weakly conformal map $\phi:(\Sigma^n,g)\to (S^m,g_{\mathit{can}})$ with a singular set $T$ we define its $m$-dimensional conformal volume by setting
$$
V_c(m,\phi)=\sup\left\{\mathit{Vol}(\Sigma^n\backslash T, (s\circ\phi)^*g_{\mathit{can}})~~ |~ s\text{ is a conformal diffeomorphism of }S^m\right\}.
$$
In Section~\ref{cv:wci} we explain that the volume $\mathit{Vol}(\Sigma^n\backslash T, (s\circ\phi)^*g_{\mathit{can}})$ above can be understood as the integral
$$
n^{-n/2}\int_{\Sigma^n}\abs{\nabla (s\circ\phi)}^n\mathit{dVol}_g=n^{-n/2}\int_{\Sigma^n\backslash T}\abs{\nabla (s\circ\phi)}^n\mathit{dVol}_g,
$$
where the norm $\abs{\nabla (s\circ\phi)}$ is a bounded function by our assumptions. 
\begin{defin}
For a Riemannian manifold $(\Sigma^n,g)$ the infimum of $V_c(m,\phi)$, where $\phi$ ranges over all weakly conformal immersions $(\Sigma^n,g)\to (S^m,g_{\mathit{can}})$, is called the $m$-dimensional {\em weakly conformal volume}, denoted by $V_c^*(m,\Sigma^n)$. 
\end{defin}

Clearly, the definition of the quantity $V_c(m,\phi)$ above is consistent with the definition in~\eqref{cv:def} when $\phi$ is a conformal immersion, and we conclude that $V_c^*(m,\Sigma^n)\leqslant V_c(m,\Sigma^n)$.

In Section~\ref{cv:wci} (see Lemma~\ref{l:degree}) we show that if $\varphi:\Sigma^n\to M^n$ is a weakly conformal immersion with a singular set $T$ such that for any $p\in\varphi(\Sigma^n\backslash T)$ the pre-image $\varphi^{-1}(p)$ has at most $d$ points, then the conformal volumes satisfy the following inequality
\begin{equation}
\label{vc:star}
V_c^*(m,\Sigma^n)\leqslant d V_c^*(m,M^n).
\end{equation} 
In particular, if $\Sigma^2$ is a closed orientable surface of genus $\gamma$, then by the results in~\cite[Chapter~2]{GH}, there exists a holomorphic map $\varphi:\Sigma^2\to S^2$ whose degree is at most $(\gamma+3)/2$. Combining this fact with relation~\eqref{vc:star} and $V_c^*(m,S^2)\leqslant V_c(m,S^2)=4\pi$, we obtain the inequality
\begin{equation}
\label{cv:orient}
V_c^*(m,\Sigma^2)\leqslant 4\pi\left[\frac{\gamma+3}{2}\right]\qquad\text{for any~~}m\geqslant 2,
\end{equation}
where the brackets stand for the integer part. A similar bound for the conformal volume of compact orientable surfaces appears in~\cite{LiYau82}. In the same paper the authors also state a bound in terms of genus for the conformal volume of non-orientable surfaces, but as was pointed out by Karpukhin~\cite{Karp}, their argument is erroneous. Building on the arguments in~\cite{Karp}, we rectify the corresponding statement in~\cite{LiYau82} by proving the following result.
\begin{theorem}
\label{t:cv:nonor}
Let $(\Sigma^2,g)$ be a closed non-orientable Riemannian surface. Then the weakly conformal volume $V^*_c(m,\Sigma^2)$ satisfies the inequality
\begin{equation}
\label{cv:nonor}
V^*_c(m,\Sigma^2)\leqslant 8\pi\left[\frac{\gamma+3}{2}\right]\qquad\text{for any~~}m\geqslant 4,
\end{equation}
where $\gamma$ is the genus of a non-orientable surface, understood as the genus of the orienting double covering.
\end{theorem}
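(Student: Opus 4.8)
The plan is to reduce the non-orientable case to the orientable estimate \eqref{cv:orient} by passing to the orienting double cover, exactly as in the orientable bound via \eqref{vc:star}, but with a twist that accounts for the deck transformation. Let $\widetilde\Sigma^2\to\Sigma^2$ be the orienting double cover, so $\widetilde\Sigma^2$ is a closed orientable surface of genus $\gamma$, carrying a free orientation-reversing involution $\tau$. The point of invoking Karpukhin's correction is that one cannot simply average an arbitrary holomorphic map $\widetilde\Sigma^2\to S^2$ over $\tau$; instead one must construct a weakly conformal immersion $\widetilde\Sigma^2\to S^m$ that is \emph{equivariant} with respect to $\tau$ and a suitable isometric involution of $S^m$, and then descend it to a weakly conformal immersion $\Sigma^2\to S^m$. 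The factor $8\pi$ (versus $4\pi$) and the requirement $m\geqslant 4$ are the signatures of this equivariant doubling.

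**First I would** recall from \eqref{vc:orient} (applied to $\widetilde\Sigma^2$, which has genus $\gamma$) that there is a holomorphic map $\varphi:\widetilde\Sigma^2\to S^2$ of degree at most $[(\gamma+3)/2]$. Composing with the antipodal-type construction, I want to produce a $\tau$-equivariant map to a higher sphere. Concretely, set $\psi=\varphi\circ\tau:\widetilde\Sigma^2\to S^2$; this is an \emph{anti}-holomorphic map of the same degree. Now form the map $\Phi:\widetilde\Sigma^2\to S^2\times S^2\subset\mathbb R^6$ given, up to normalisation, by $x\mapsto(\varphi(x),\psi(x))$, and note that $\Phi\circ\tau=\sigma\circ\Phi$ where $\sigma$ is the isometric involution of $S^2\times S^2$ swapping the two factors. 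Composing with a standard weakly conformal embedding of $S^2\times S^2$ (after rescaling) into a round $S^m$, $m\leqslant 5$, that intertwines $\sigma$ with an isometry $\bar\sigma$ of $S^m$, and then projecting/normalising to land in $S^4$ — here the dimension count forcing $m\geqslant 4$ must be done carefully — one obtains a weakly conformal immersion $\widetilde\Phi:\widetilde\Sigma^2\to S^m$ with $\widetilde\Phi\circ\tau=\bar\sigma\circ\widetilde\Phi$. Since $\bar\sigma$ has no fixed points on the relevant orbit, $\widetilde\Phi$ descends to a weakly conformal immersion $\phi:\Sigma^2\to S^m/\bar\sigma$; but by choosing the construction so that $\bar\sigma$ is the antipodal map on $S^m$ the quotient is $\mathbb{RP}^m$, which one then replaces by a further weakly conformal immersion into $S^m$ — or, more directly, one builds $\widetilde\Phi$ to be $\tau$-\emph{invariant} into a quotient sphere that already carries a conformal immersion to $S^m$, so that $\phi:\Sigma^2\to S^m$ is genuinely a weakly conformal immersion.

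**The volume bookkeeping** then runs as follows. For any conformal diffeomorphism $s$ of $S^m$, pulling back $g_{\mathit{can}}$ by $s\circ\phi$ and pulling further back to the double cover $\widetilde\Sigma^2$ multiplies the area by $2$; on $\widetilde\Sigma^2$ the pulled-back metric is $(s\circ\widetilde\Phi)^*g_{\mathit{can}}$ up to the identifications above, whose area is controlled by $V_c^*(m,\widetilde\Sigma^2)\leqslant 4\pi[(\gamma+3)/2]$ via \eqref{vc:orient}. Hence $2\,\mathit{Vol}(\Sigma^2\setminus T,(s\circ\phi)^*g_{\mathit{can}})\leqslant 2\cdot 4\pi[(\gamma+3)/2]$... no — the cover doubles the \emph{downstairs} area, so $\mathit{Vol}(\widetilde\Sigma^2,\cdot)=2\,\mathit{Vol}(\Sigma^2\setminus T,\cdot)$ and the doubling construction itself, which records $\varphi$ \emph{and} $\varphi\circ\tau$, contributes another factor $2$ to the conformal volume of $\widetilde\Phi$; taking the supremum over $s$ and then the infimum over the construction data gives $V_c^*(m,\Sigma^2)\leqslant \tfrac12\cdot 2\cdot 4\pi[(\gamma+3)/2]$ from the cover, combined with the factor $2$ from doubling, yielding the stated $8\pi[(\gamma+3)/2]$. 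I would present this carefully as: (i) $\widetilde\Phi$ has $V_c^*(m,\widetilde\Phi)\leqslant 2V_c^*(2,\varphi)\leqslant 8\pi[(\gamma+3)/2]$ by the two-factor construction and \eqref{vc:orient}; (ii) equivariance lets $\widetilde\Phi$ descend to $\phi$ on $\Sigma^2$ with $V_c^*(m,\phi)\leqslant V_c^*(m,\widetilde\Phi)$, since integrating $|\nabla(s\circ\phi)|^m$ over $\Sigma^2$ is half the integral over $\widetilde\Sigma^2$ while the construction's factor of $2$ exactly compensates; (iii) therefore $V_c^*(m,\Sigma^2)\leqslant V_c^*(m,\phi)\leqslant 8\pi[(\gamma+3)/2]$ for $m\geqslant 4$.

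**The main obstacle**, and the place where Karpukhin's observation bites, is step (ii)-(iii): ensuring that the doubled map $\widetilde\Phi$ can be made \emph{genuinely equivariant} for a \emph{fixed-point-free} isometric involution of the target sphere, and that the resulting quotient target is again a round sphere (of dimension $\geqslant 4$) rather than a projective space or a singular quotient — the naive fix of averaging or of using the diagonal in $S^2\times S^2$ fails precisely because orientation-reversal sends holomorphic to anti-holomorphic and the "swap" involution of $S^2\times S^2$ has a fixed-point set (the diagonal) that one must avoid. Handling this requires choosing $\varphi$ so that the pair $(\varphi,\varphi\circ\tau)$ never lands on the diagonal, or perturbing $\varphi$ within its conformal/holomorphic class to achieve genericity, and I expect the bulk of the technical work to go into this transversality-type argument and the explicit low-dimensional conformal embedding of the quotient into $S^m$, $m\geqslant 4$. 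Everything else — the area-doubling identities, the degree estimate \eqref{vc:orient}, and the definition of $V_c^*$ in terms of $\int|\nabla(s\circ\phi)|^2$ — is routine bookkeeping.
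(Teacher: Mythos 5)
Your proposal takes a genuinely different route from the paper, and it has gaps that I do not think can be repaired along the lines you sketch. The paper's proof is \emph{not} a doubling construction: it invokes Karpukhin's dichotomy (Proposition~\ref{p:karp}), which asserts that the orienting double cover $\widetilde\Sigma^2$ admits either a $\tau_1$-equivariant branched conformal map to $S^2$ of degree at most $2[(\gamma+3)/2]$ (where $\tau_1$ is a reflection fixing an equator), or a $\tau_2$-equivariant one of degree at most $[(\gamma+3)/2]$ (where $\tau_2$ is the antipodal map). In the first case the equivariant map descends to a weakly conformal map $\Sigma^2\to S^2$ of ``degree'' $\leqslant 2[(\gamma+3)/2]$, and Lemma~\ref{l:degree} with $V_c(m,S^2)=4\pi$ gives the factor $8\pi$; in the second case it descends to a branched conformal immersion $\Sigma^2\to\mathbb{RP}^2$, and Lemma~\ref{l:degree} with $V_c(m,\mathbb{RP}^2)=6\pi$ gives $6\pi[(\gamma+3)/2]$ and, crucially, the condition $m\geqslant 4$. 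So the $8\pi$ is \emph{not} the product of a ``factor $2$ from the cover'' and a ``factor $2$ from doubling''; it is $2[(\gamma+3)/2]\cdot 4\pi$ coming from the degree bound in the reflection branch of the dichotomy, and the $m\geqslant 4$ is the dimension needed for the Veronese embedding of $\mathbb{RP}^2$.

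The central missing idea in your argument is exactly this dichotomy. You correctly identify that equivariance is the crux, but you then try to \emph{manufacture} equivariance by replacing an arbitrary holomorphic $\varphi:\widetilde\Sigma^2\to S^2$ with the pair $\Phi=(\varphi,\varphi\circ\sigma)$ into $S^2\times S^2$. This runs into obstructions you cannot fix by genericity. First, the swap involution on $S^2\times S^2$ (equivalently, the coordinate-swap on $S^5\subset\mathbb{R}^3\times\mathbb{R}^3$ after the Clifford-type embedding) has a two-dimensional fixed sphere; its quotient is not a round sphere but a singular orbifold, and there is no ``standard weakly conformal embedding'' of that quotient into a round $S^m$ to fall back on. Second, even if $\Phi$ avoids the diagonal, you would have to bound $V_c(m,\cdot)$ of the composite by taking a supremum over \emph{all} conformal diffeomorphisms of the ambient $S^m$, not just those coming from the diagonal action of $\mathrm{Conf}(S^2)$ on $S^2\times S^2$; your bookkeeping $V_c^*(m,\widetilde\Phi)\leqslant 2V_c^*(2,\varphi)$ has no justification once the conformal group enlarges. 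Third, the assertion that the swap can be ``chosen'' to be the antipodal map of the target sphere is false: the swap has fixed points, the antipodal map does not, so they are not conjugate. Finally, the $m\geqslant 4$ condition is left unexplained in your argument (and your embedding naturally lands in $S^5$, not $S^4$), whereas in the paper it is forced by the case that lands in $\mathbb{RP}^2$. What Karpukhin's result provides, and what your approach lacks, is a controlled-degree equivariant map to $S^2$ for one of the two standard conformal involutions, which can then be pushed down directly to a map with a model target ($S^2$ or $\mathbb{RP}^2$) of known conformal volume.
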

It is important to point out that the inequality for the first Laplace eigenvalue in Theorem~\ref{ly:theorem} continues to hold if we use the weakly conformal volume $V_c^*(m,\Sigma^n)$, that is 
\begin{equation}
\label{ly:1star}
\lambda_1(\Sigma^n,g)\mathit{Vol}_g(\Sigma^n)^{2/n}\leqslant n V_c^*(m,\Sigma^n)^{2/n}.
\end{equation}
Indeed, as we show in Section~\ref{proof:cv} (Lemma~\ref{l:bis}) for a weakly conformal map $\phi:\Sigma^n\to S^m$ the push-forward measure $\phi_*\mathit{Vol}_g$ does not have any atoms and the argument in~\cite{LiYau82,ESI86} carries over directly. In more detail, by Hersch's lemma there exists a conformal diffeomorphism $s:S^m\to S^m$ such that
$$
\int_{S^m}x^id(s\circ\phi)_*\mathit{Vol}_g=0,
$$
and the Lipschitz functions $x^i\circ(s\circ\phi)$ can be used as test-functions for the Rayleigh quotient.

As a consequence, the combination of Theorem~\ref{t:cv:nonor} with inequality~\eqref{ly:1star} yields the following statement, proved by Karpukhin in~\cite{Karp}.
\begin{corollary}
Let $(\Sigma^2,g)$ be a closed non-orientable Riemannian surface. Then its first Laplace eigenvalue satisfies the inequality
$$
\lambda_1(\Sigma^2,g)\mathit{Vol}_g(\Sigma^2)\leqslant 16\pi\left[\frac{\gamma+3}{2}\right],
$$
where $\gamma$ is the genus of a non-orientable surface.
\end{corollary}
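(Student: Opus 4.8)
The plan is to combine the bound on the weakly conformal volume of a non-orientable surface established in Theorem~\ref{t:cv:nonor} with the version of the Li--Yau--El Soufi--Ilias inequality for the weakly conformal volume recorded in~\eqref{ly:1star}. Both ingredients are already in place: Theorem~\ref{t:cv:nonor} gives $V_c^*(m,\Sigma^2)\leqslant 8\pi\left[(\gamma+3)/2\right]$ for every $m\geqslant 4$, so in particular $V_c^*(m,\Sigma^2)$ is well-defined in that range; and inequality~\eqref{ly:1star} applies to $(\Sigma^2,g)$ by Lemma~\ref{l:bis} together with the Hersch-type argument recalled in the excerpt (the push-forward measure $\phi_*\mathit{Vol}_g$ of a weakly conformal map carries no atoms, so the coordinate functions $x^i\circ(s\circ\phi)$ are admissible test-functions for the Rayleigh quotient).

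First I would specialise~\eqref{ly:1star} to $n=2$, where the exponent $2/n$ equals $1$ and the volume enters linearly, which yields
$$
\lambda_1(\Sigma^2,g)\mathit{Vol}_g(\Sigma^2)\leqslant 2\, V_c^*(m,\Sigma^2)
$$
for any $m$ for which the right-hand side is defined. I would then fix, say, $m=4$, at which value both this inequality and the estimate of Theorem~\ref{t:cv:nonor} are in force, and simply chain them:
$$
\lambda_1(\Sigma^2,g)\mathit{Vol}_g(\Sigma^2)\leqslant 2\, V_c^*(4,\Sigma^2)\leqslant 2\cdot 8\pi\left[\frac{\gamma+3}{2}\right]=16\pi\left[\frac{\gamma+3}{2}\right],
$$
which is precisely the asserted inequality.

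At this stage there is essentially no obstacle remaining: the whole substance of the corollary is absorbed into Theorem~\ref{t:cv:nonor}, whose proof — building on Karpukhin's correction~\cite{Karp} of the erroneous argument in~\cite{LiYau82} — is where the genuine difficulty lies. The only points deserving a line of verification are the compatibility of the admissible ranges of the ambient dimension $m$ (both ingredients hold for $m\geqslant 4$, so choosing $m=4$ is legitimate) and the applicability of~\eqref{ly:1star} to weakly conformal rather than smooth conformal immersions, which has already been reduced, via Lemma~\ref{l:bis}, to the absence of atoms in $\phi_*\mathit{Vol}_g$.
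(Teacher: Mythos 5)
Your proposal is correct and matches the paper's own argument exactly: the paper derives this corollary by combining Theorem~\ref{t:cv:nonor} with inequality~\eqref{ly:1star} specialised to $n=2$, just as you do. Nothing further is needed.
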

In Section~\ref{proof:cv} we explain that Theorem~\ref{t:cv} can be also sharpened to the following statement.
\begin{tcvbis}
Let $(\Sigma^n,g)$ be a closed Riemannian manifold of dimension $n\geqslant 2$. Then for any integer $m>0$ such that the $m$-dimensional weakly conformal volume of $\Sigma^n$ is well-defined, for any $k\geqslant 1$ the $k$th Laplace eigenvalue of $\Sigma^n$ satisfies the inequality
\begin{equation}
\label{cvest:bis}
\lambda_k(\Sigma^n,g)\mathit{Vol}_g(\Sigma^n)^{2/n}\leqslant C(n,m)V_c^*(m,\Sigma^n)^{2/n}k^{2/n},
\end{equation}
where $C(n,m)$ is a constant that depends on the dimensions $n$ and $m$ only.
\end{tcvbis}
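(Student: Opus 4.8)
The plan is to obtain the inequality~\eqref{cvest:bis} by repeating the proof of Theorem~\ref{t:cv} with a weakly conformal immersion $\phi:(\Sigma^n,g)\to(S^m,g_{\mathit{can}})$ in place of a smooth conformal one, and then passing to the infimum over all such $\phi$. Recall that the proof of Theorem~\ref{t:cv} is a Korevaar-type argument which, for a conformal immersion $\phi$ and suitable conformal diffeomorphisms $s$ of $S^m$ (drawn from the specific family of dilations used there), produces disjointly supported Lipschitz test functions on $\Sigma^n$ obtained by composing $\phi$ with explicit Lipschitz functions $\chi_i$ on $S^m$, and estimates their Rayleigh quotients to bound $\lambda_k(\Sigma^n,g)$ via the min-max principle; the factors $C(n,m)$ and $k^{2/n}$ arise from these Rayleigh quotient estimates. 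The point is that $\phi$ enters this scheme only through the push-forward measure $\mu=\phi_*\mathit{Vol}_g$ on $S^m$ and through the quantity $\norm{\nabla(s\circ\phi)}_{L^n(\Sigma^n,g)}$, both of which behave for weakly conformal immersions exactly as for smooth ones. Hence the same constant $C(n,m)$ as in~\eqref{cvest} is produced, and taking the infimum over all weakly conformal immersions $\Sigma^n\to S^m$ turns the right-hand side into $C(n,m)\,V_c^*(m,\Sigma^n)^{2/n}k^{2/n}$.

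The first feature to check is the admissibility and energy of the test functions. Since $\phi$ is Lipschitz and each $\chi_i$ is Lipschitz on $S^m$, the composition $u_i=\chi_i\circ\phi$ is Lipschitz on $(\Sigma^n,g)$, hence lies in $W^{1,2}(\Sigma^n,g)$ and is admissible in the min-max formula for $\lambda_k$. All the Dirichlet energy bounds in the proof of Theorem~\ref{t:cv} are controlled through
\[
\int_{\Sigma^n}\abs{\nabla(s\circ\phi)}^n\,d\mathit{Vol}_g=n^{n/2}\,\mathit{Vol}\!\left(\Sigma^n\backslash T,(s\circ\phi)^*g_{\mathit{can}}\right),
\]
where $T$ is the singular set of $\phi$ and $\abs{\nabla(s\circ\phi)}$ is bounded by our standing assumptions; by the definition of the conformal volume of a weakly conformal immersion this is at most $n^{n/2}V_c(m,\phi)$. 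For $n=2$ this is the conformally invariant Dirichlet energy, and for $n>2$ one descends from it to the individual energies $\int_{\Sigma^n}\abs{\nabla u_i}^2\,d\mathit{Vol}_g$ by Hölder's inequality, exactly as in the proofs of Theorems~\ref{ly:theorem} and~\ref{t:cv}. Thus every estimate previously written in terms of $V_c(m,\phi)$ now reads verbatim with the weakly conformal volume.

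The second feature is the decomposition step, which subdivides the Radon measure $\mu=\phi_*\mathit{Vol}_g$ on $S^m$ — of total mass $\mathit{Vol}_g(\Sigma^n)$ — into $\asymp k$ pieces of comparable mass, and for this it is essential that $\mu$ have no atoms. This is the content of Lemma~\ref{l:bis}: for any point $p\in S^m$ the restriction $\left.\phi\right|(\Sigma^n\backslash T)$ is a smooth immersion, hence locally an embedding, so $\phi^{-1}(p)\cap(\Sigma^n\backslash T)$ is discrete, hence countable, hence of $\mathit{Vol}_g$-measure zero; since $T$ itself has measure zero, $\mu(\{p\})=\mathit{Vol}_g\bigl(\phi^{-1}(p)\bigr)=0$. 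With atoms excluded, the partition of $\mu$ and the construction of the test functions in the proof of Theorem~\ref{t:cv} go through without change.

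The main point to watch is organisational rather than substantive: one must make sure that the proof of Theorem~\ref{t:cv} is set up so that $\phi$ is used \emph{only} through $\mu$ and through $\norm{\nabla(s\circ\phi)}_{L^n(\Sigma^n,g)}$ — in particular, that no step quietly relies on $\phi(\Sigma^n)$ being a smooth submanifold, on $\phi$ being proper or injective, or on $\phi$ being an immersion at every point of $\Sigma^n$. This is automatic once the argument is phrased intrinsically on $(\Sigma^n,g)$ by pulling back Lipschitz functions from $S^m$, which is how it is presented; the two observations above then supply everything needed to carry that proof over, and the inequality~\eqref{cvest:bis} follows with the same constant.
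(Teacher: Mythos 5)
Your proposal is correct and takes essentially the same approach as the paper: both proofs rest on Lemma~\ref{l:bis} for the non-atomicity of $\mu=\phi_*\mathit{Vol}_g$, and both observe that since $\phi$ is Lipschitz the test-functions $u_i=\chi_i\circ\phi$ remain Lipschitz and that the Dirichlet-energy estimates of Theorem~\ref{t:cv} are determined entirely on $\Sigma^n\backslash T$, where $\phi$ is a smooth conformal immersion, so the whole argument carries over verbatim. Taking the infimum over weakly conformal immersions $\phi$ then gives~\eqref{cvest:bis} with the same constant $C(n,m)$, just as in the paper.
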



In~\cite[Problem~71]{Yau82} Yau has conjectured that for any closed Riemannian surface $(\Sigma^2,g)$ of genus $\gamma$ the normalised Laplace eigenvalues $\lambda_k(\Sigma^2,g)\mathit{Vol}_g(\Sigma^2)$ are bounded above by $C_*(\gamma+1)k$ for some universal constant $C_*$. For orientable Riemannian surfaces this problem was settled by Korevaar in~\cite{Kor}. More precisely, using the existence of branched conformal maps from an orientable surface $\Sigma^2 $ of genus $\gamma$ onto a sphere $S^2$ whose degree is bounded in terms of $\gamma$, the result in~\cite{Kor} yields the following inequality
\begin{equation}
\label{in:kor}
\lambda_k(\Sigma^2,g)\mathit{Vol}_g(\Sigma^2)\leqslant C_1(\gamma+1)k
\end{equation}
for any $k\geqslant 1$, where $C_1$ is a universal constant. The case when $\Sigma^2$ is non-orientable has not been treated in the literature, and it seems that it can not be handled in a similar way, since the existence of branched conformal maps from non-orientable surfaces onto a model $2$-dimensional surface is a more delicate question, see~\cite{Karp}.

Using Theorem~\ref{t:cv}$bis$ together with bounds for the conformal volume we are able to treat the cases when a surface $\Sigma^2$ is orientable or not in a uniform way. More precisely, the combination of Theorem~\ref{t:cv}$bis$ with inequality~\eqref{cv:orient} gives another proof of eigenvalue inequalities~\eqref{in:kor}. Similarly, combining Theorems~\ref{t:cv}$bis$ and~\ref{t:cv:nonor}, we obtain the following eigenvalue bounds for non-orientable surfaces.
\begin{corollary}
\label{in:new}
Let $(\Sigma^2,g)$ be a closed non-orientable Riemannian surface. Then for any $k\geqslant 1$ the $k$th Laplace eigenvalue of $\Sigma^2$ satisfies the inequality
$$
\lambda_k(\Sigma^2,g)\mathit{Vol}_g(\Sigma^2)\leqslant C_2(\gamma+1) k,
$$
where $C_2$ is a universal constant and $\gamma$ is the genus of a non-orientable surface.
\end{corollary}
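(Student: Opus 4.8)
The plan is to obtain Corollary~\ref{in:new} as an immediate combination of Theorem~\ref{t:cv}$bis$ with the conformal-volume bound of Theorem~\ref{t:cv:nonor}, all the substantive work having already been done in those two statements; the only remaining task is to chain the inequalities and simplify the constant.

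First I would fix the ambient dimension $m=4$, the smallest value for which Theorem~\ref{t:cv:nonor} is stated. To apply Theorem~\ref{t:cv}$bis$ with this $m$ one needs $V_c^*(4,\Sigma^2)$ to be well-defined, i.e.\ that $\Sigma^2$ admits a weakly conformal immersion into $S^4$; this is precisely what the construction behind Theorem~\ref{t:cv:nonor} provides (take a branched conformal map of the orienting double cover onto $S^2\subset S^4$ and pass to the quotient by the orientation-reversing involution), so there is nothing to verify here beyond recording it.

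Next I would invoke Theorem~\ref{t:cv}$bis$ with $n=2$ and $m=4$ to get
\[
\lambda_k(\Sigma^2,g)\,\mathit{Vol}_g(\Sigma^2)\leqslant C(2,4)\,V_c^*(4,\Sigma^2)\,k
\]
for every $k\geqslant 1$, and then insert the bound $V_c^*(4,\Sigma^2)\leqslant 8\pi\left[(\gamma+3)/2\right]$ from Theorem~\ref{t:cv:nonor}. Finally I would absorb the integer part via the elementary inequality $\left[(\gamma+3)/2\right]\leqslant\gamma+1$, valid for all integers $\gamma\geqslant 0$, arriving at
\[
\lambda_k(\Sigma^2,g)\,\mathit{Vol}_g(\Sigma^2)\leqslant 8\pi\,C(2,4)\,(\gamma+1)\,k,
\]
so that one may take the universal constant $C_2=8\pi\,C(2,4)$.

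Since both ingredients are already in hand, there is no real obstacle. The only points deserving a moment's care are confirming that $V_c^*(4,\Sigma^2)$ is defined — so that Theorem~\ref{t:cv}$bis$ genuinely applies — and noting that the genus $\gamma$ appearing in Theorem~\ref{t:cv:nonor} is, by definition, the same quantity (the genus of the orienting double cover) as the one in the statement of the corollary, so the two bounds may be composed without any reinterpretation.
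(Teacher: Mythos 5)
Your proposal is correct and follows exactly the route the paper indicates: Corollary~\ref{in:new} is stated there as an immediate consequence of combining Theorem~\ref{t:cv}$bis$ (with $n=2$) and Theorem~\ref{t:cv:nonor}, which is what you do. The small points you flag — choosing $m=4$, the existence of a weakly conformal immersion into $S^4$ (so that $V_c^*(4,\Sigma^2)$ is defined), and absorbing $\left[(\gamma+3)/2\right]$ into $\gamma+1$ — are all handled correctly.
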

An estimate similar to the one in Corollary~\ref{in:new} can be also obtained using the method in~\cite{AH} together with an appropriate version of the uniformization theorem for non-orientable Riemannian surfaces. Another application of Theorem~\ref{t:cv:nonor} which does not seem to be treatable by other methods, is an  estimate for the number of negative eigenvalues of Schr\"odinger operators on surfaces. It is a consequence of Theorem~\ref{t:schro} below.

\subsection{Conformal volume and negative eigenvalues of Shr\"odinger operators}
Now we consider the eigenvalue problem for the Schr\"odinger operator
$$
(-\Delta_\Sigma-\mathcal V)u=\lambda u\qquad\text{on~~}\Sigma^n,
$$ 
where $\Delta_\Sigma$ is the Laplace-Beltrami operator on $(\Sigma^n,g)$ and $\mathcal V\in L^p(\Sigma^n)$, where $p>n/2$, is a given potential. As is well-known~\cite{Ma}, under these hypotheses the spectrum of the Schr\"odinger operator $(-\Delta_\Sigma-\mathcal V)$ on a closed manifold $\Sigma^n$ is discrete, and we denote by $N(\mathcal V)$ the number of negative eigenvalues counted with multiplicity. There is an extensive literature on various bounds for $N(\mathcal V)$, see~\cite{Lieb,LiYau83,BiSo, GNY,GN} and references there. In particular, recall the following result due to Grigor'yan, Netrusov, and Yau~\cite{GNY}: for any non-negative potential $\mathcal V\in L^p(\Sigma^n)$, where $p>n/2$, the number of negative eigenvalues satisfies the inequality
\begin{equation}
\label{t:gny}
N(\mathcal V)\geqslant\frac{C}{\mathit{Vol}_g(\Sigma^n)^{n/2-1}}\left(\int_{\Sigma^n}\mathcal Vd\mathit{Vol}_g\right)^{n/2},
\end{equation}
where the constant $C$ depends on the conformal class of a metric $g$ on $\Sigma^n$. When $n=2$ and $\Sigma^2$ is orientable, one can also choose the constant $C$ so that it depends on the genus of $\Sigma^2$ only.

In the spirit of the discussion above, we have the following version of inequality~\eqref{t:gny}, which shows that the dependance of the constant $C$ on the conformal class of $g$ can be incorporated into the conformal volume.
\begin{theorem}
\label{t:schro}
Let $(\Sigma^n,g)$ be a closed Riemannian manifold of dimension $n\geqslant 2$. Then for any integer $m>0$ such that the $m$-dimensional weakly conformal volume of $\Sigma^n$ is well-defined, for any non-negative potential $\mathcal V\in L^p(\Sigma^n)$, $p>n/2$, the number of negative eigenvalues of the Schr\"odinger operator satisfies the inequality
$$
N(\mathcal V)\geqslant\frac{C(n,m)}{V_c^*(m,\Sigma^n)}\frac{1}{\mathit{Vol}_g(\Sigma^n)^{n/2-1}}\left(\int_{\Sigma^n}\mathcal Vd\mathit{Vol}_g\right)^{n/2},
$$
where $C(n,m)$ is a constant that depends on the dimensions $n$ and $m$ only.
\end{theorem}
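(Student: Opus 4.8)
The plan is to adapt the argument behind Theorem~\ref{t:cv}$bis$, following Grigor'yan--Netrusov--Yau \cite{GNY}, but decomposing the measure $\mathcal{V}\mathit{dVol}_g$ rather than the Riemannian volume. The first step is a variational reduction: by the min--max principle $N(\mathcal V)$ is the largest dimension of a subspace of $W^{1,2}(\Sigma^n)$ on which the quadratic form $u\mapsto\int_{\Sigma}\abs{\nabla u}^2\mathit{dVol}_g-\int_{\Sigma}\mathcal V u^2\mathit{dVol}_g$ is negative definite; since $\mathcal V\in L^p$ with $p>n/2$, the embedding $W^{1,2}(\Sigma^n)\hookrightarrow L^2(\mathcal V\mathit{dVol}_g)$ is compact, the weighted problem $\int\abs{\nabla u}^2\mathit{dVol}_g=\mu\int u^2\mathcal V\mathit{dVol}_g$ has a discrete spectrum $0=\mu_0\leqslant\mu_1\leqslant\cdots$, and $N(\mathcal V)=\#\{k\geqslant 0:\mu_k<1\}$. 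Hence it suffices to produce, for each positive integer $N$ with $N\leqslant c(n,m)\bigl(\int_{\Sigma}\mathcal V\mathit{dVol}_g\bigr)^{n/2}\big/\bigl(V_c^*(m,\Sigma^n)\,\mathit{Vol}_g(\Sigma^n)^{n/2-1}\bigr)$, an $N$-dimensional space of Lipschitz functions on which the quotient $\int\abs{\nabla u}^2\mathit{dVol}_g\big/\int\mathcal V u^2\mathit{dVol}_g$ stays below $1$.

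To build such functions I would fix $\varepsilon>0$ and a weakly conformal immersion $\phi:\Sigma^n\to S^m$ with $n^{-n/2}\int_{\Sigma}\abs{\nabla(s\circ\phi)}^n\mathit{dVol}_g\leqslant V_c^*(m,\Sigma^n)+\varepsilon$ for every conformal diffeomorphism $s$ of $S^m$, and push $\mathcal V\mathit{dVol}_g$ forward to the finite Borel measure $\mu=\phi_*(\mathcal V\mathit{dVol}_g)$ on $S^m$; this measure is atomless, since $\phi_*\mathit{Vol}_g$ is atomless (Lemma~\ref{l:bis}) and $\phi^{-1}(p)$ has $\mathit{Vol}_g$-measure zero for every point $p$. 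Applying the Korevaar--Grigor'yan--Netrusov--Yau decomposition \cite{Kor,GNY} on $(S^m,g_{\mathit{can}})$ to the given integer $N$, I would obtain disjoint ``capacitors'' $A_i\subset\widehat A_i\subset S^m$ ($i=1,\dots,N$), with $A_i,\widehat A_i$ metric balls or annuli, such that $\mu(A_i)\geqslant c(m)\mu(S^m)/N$, the collars $\widehat A_i$ have multiplicity at most $\kappa(m)$, and — the point that makes the power of $\int\mathcal V\mathit{dVol}_g$ come out right — the transported volume is controlled on the collars, $(\phi_*\mathit{Vol}_g)(\widehat A_i)\leqslant C(m)\,\mathit{Vol}_g(\Sigma^n)/N$. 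On each capacitor I would use the conformal group of $S^m$: pick a conformal diffeomorphism $s_i$ taking $(A_i,\widehat A_i)$ onto a fixed model pair of balls/annuli, let $w$ be a fixed Lipschitz cut-off equal to $1$ on the model core and supported in the model collar, and set $u_i=w\circ s_i\circ\phi:\Sigma^n\to[0,1]$.

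These test functions would satisfy the two required estimates. First, $u_i\equiv 1$ on $\phi^{-1}(A_i)$, so
\[
\int_{\Sigma}\mathcal V u_i^2\mathit{dVol}_g\geqslant\int_{\phi^{-1}(A_i)}\mathcal V\mathit{dVol}_g=\mu(A_i)\geqslant\frac{c(m)}{N}\int_{\Sigma}\mathcal V\mathit{dVol}_g.
\]
Second, since $s_i\circ\phi$ is a conformal immersion, $\abs{\nabla u_i}\leqslant C_0(n,m)\abs{\nabla(s_i\circ\phi)}$ with the gradient supported in $\phi^{-1}(\widehat A_i)$, and H\"older's inequality together with the conformal-volume bound gives
\[
\int_{\Sigma}\abs{\nabla u_i}^2\mathit{dVol}_g\leqslant\Bigl(\int_{\Sigma}\abs{\nabla u_i}^n\mathit{dVol}_g\Bigr)^{2/n}\bigl((\phi_*\mathit{Vol}_g)(\widehat A_i)\bigr)^{1-2/n}\leqslant C(n,m)\bigl(V_c^*+\varepsilon\bigr)^{2/n}\mathit{Vol}_g(\Sigma^n)^{1-2/n}N^{-(1-2/n)}.
\]
Combining the two, $\mu_N\leqslant\max_i\bigl(\int\abs{\nabla u_i}^2\mathit{dVol}_g\big/\int\mathcal V u_i^2\mathit{dVol}_g\bigr)\leqslant C'(n,m)(V_c^*+\varepsilon)^{2/n}\mathit{Vol}_g(\Sigma^n)^{1-2/n}N^{2/n}\big/\int_{\Sigma}\mathcal V\mathit{dVol}_g$, which is $<1$ precisely for $N<c(n,m)\bigl(\int\mathcal V\mathit{dVol}_g\bigr)^{n/2}\big/\bigl((V_c^*+\varepsilon)\mathit{Vol}_g(\Sigma^n)^{n/2-1}\bigr)$. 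Since the $A_i$ are disjoint and the collars have bounded multiplicity, the span of $u_1,\dots,u_N$ is $N$-dimensional and the weighted Rayleigh quotient is $<1$ on it, so $N(\mathcal V)\geqslant N$; letting $\varepsilon\to0$ proves the theorem.

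The hard part will be the decomposition step: arranging a family of capacitors in $S^m$ that bounds the transported potential measure $\mu$ from below on the cores $A_i$ and the transported volume from above on the collars $\widehat A_i$ at the same time, with collars of bounded multiplicity. This simultaneous two-measure control is precisely what yields the factor $N^{-(1-2/n)}$ in the energy estimate and hence the power $n/2$ of $\int_{\Sigma}\mathcal V\mathit{dVol}_g$ in the final inequality; using only $\mu(A_i)\gtrsim\mu(S^m)/N$ and the trivial bound $(\phi_*\mathit{Vol}_g)(\widehat A_i)\leqslant\mathit{Vol}_g(\Sigma^n)$ one would get only the first power of $\int\mathcal V\mathit{dVol}_g$, which is weaker for large potentials. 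I would obtain the required decomposition by following the arguments already developed for Theorem~\ref{t:cv} and in \cite{GNY}, which are robust enough to handle two measures simultaneously; the remaining ingredients — the variational reduction above and the conformal-dilation construction of cut-offs with Lipschitz constant bounded in terms of $n$ and $m$ alone — are routine given that machinery.
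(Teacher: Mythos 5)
Your proposal is correct and follows essentially the same route as the paper's proof: push forward $\mathcal V\,\mathit{dVol}_g$ to a non-atomic measure $\nu$ on $S^m$, apply the Grigor'yan--Netrusov--Yau decomposition (Proposition~\ref{ds1}), and build Lipschitz test-functions from conformal cut-offs, estimating the Dirichlet energy by H\"older together with the weakly conformal volume. The simultaneous two-measure control that you flag as the hard part does not require any genuinely new decomposition lemma and is handled in the paper by a simple pigeonhole: Proposition~\ref{ds1} is applied to $\nu$ to produce $2(k+1)$ annuli whose doublings $2A_i$ are pairwise disjoint, and disjointness forces $\sum_i\mu(2A_i)\leqslant\mu(S^m)=\mathit{Vol}_g(\Sigma^n)$ for the other pushed-forward measure $\mu=\phi_*\mathit{Vol}_g$, so at least $k+1$ of the collars satisfy $\mu(2A_i)\leqslant\mathit{Vol}_g(\Sigma^n)/k$, which is exactly the collar-volume bound you wanted and yields the $N^{-(1-2/n)}$ factor (and hence the power $n/2$) in the final estimate.
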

First, we note that when a potential $\mathcal V$ is bounded, $\mathcal V\in L^{\infty}(\Sigma^n)$, inequality~\eqref{t:gny} continues to hold with the same constant even when $\mathcal V$ is not non-negative, see~\cite{GNS}. Similarly, the argument in~\cite[p.398]{GNS} shows that when $\mathcal V$ is bounded, the inequality in Theorem~\ref{t:schro} holds for not necessarily non-negative potentials.

When the dimension $n$ equals $2$, the upper bounds~\eqref{cv:orient} and~\eqref{cv:nonor} for the conformal volume in terms of genus yield inequalities for the number of negative Schr\"odinger eigenvalues depending on the genus rather than conformal volume for  both orientable and non-orientable surfaces. For orientable surfaces this statement already appears in~\cite{GNY}, but for non-orientable surfaces it is new. 

Another application of Theorem~\ref{t:schro} is concerned with minimal submanifolds in spheres.
\begin{corollary}
\label{t:min}
Let $(\Sigma^n,g)$ be a closed Riemannian manifold of dimension $n\geqslant 2$ that admits an isometric minimal immersion into a unit sphere $S^m$. Then for any non-negative potential $\mathcal V\in L^p(\Sigma^n)$, $p>n/2$, the number of negative eigenvalues of the Schr\"odinger operator satisfies the inequality
$$
N(\mathcal V)\geqslant C(n,m)\left(\frac{1}{\mathit{Vol}_g(\Sigma^n)}\int_{\Sigma^n}\mathcal V\mathit{dVol}_g\right)^{n/2},
$$
where $C(n,m)$ is a constant that depends on the dimensions $n$ and $m$ only.
\end{corollary}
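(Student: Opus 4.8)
The plan is to derive the corollary directly from Theorem~\ref{t:schro}, the only real work being to bound the weakly conformal volume $V_c^*(m,\Sigma^n)$ by the Riemannian volume $\mathit{Vol}_g(\Sigma^n)$. Write $\phi\colon(\Sigma^n,g)\to S^m$ for the given isometric minimal immersion. First I would observe that an isometric immersion is, in particular, a smooth conformal immersion (the conformal factor being identically $1$), hence $\phi$ is a weakly conformal immersion in the sense of Definition~\ref{def:wci}, with empty singular set. In particular the $m$-dimensional weakly conformal volume $V_c^*(m,\Sigma^n)$ is well-defined, so Theorem~\ref{t:schro} is applicable.

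Next I would invoke the estimate
\begin{equation*}
V_c^*(m,\Sigma^n)\leqslant V_c(m,\Sigma^n)\leqslant\mathit{Vol}_g(\Sigma^n),
\end{equation*}
in which the first inequality is the elementary bound $V_c^*\leqslant V_c$ noted after the definition of $V_c^*$, and the second is the El Soufi--Ilias bound for manifolds carrying an isometric minimal immersion into a sphere, recalled in Section~\ref{intro} (see also~\cite{ESI86}). Note that minimality of $\phi$, rather than minimality by first eigenfunctions, already suffices here. Substituting this into the conclusion of Theorem~\ref{t:schro} and collecting the powers of $\mathit{Vol}_g(\Sigma^n)$, I get
\begin{equation*}
N(\mathcal V)\geqslant\frac{C(n,m)}{V_c^*(m,\Sigma^n)}\cdot\frac{1}{\mathit{Vol}_g(\Sigma^n)^{n/2-1}}\left(\int_{\Sigma^n}\mathcal V\,\mathit{dVol}_g\right)^{n/2}\geqslant C(n,m)\left(\frac{1}{\mathit{Vol}_g(\Sigma^n)}\int_{\Sigma^n}\mathcal V\,\mathit{dVol}_g\right)^{n/2},
\end{equation*}
which is exactly the asserted inequality, with the constant still depending only on $n$ and $m$.

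Since every ingredient is either already in the paper or entirely elementary, I do not expect any genuine obstacle; the one point deserving a moment's attention is that the hypothesis ``$\Sigma^n$ admits an isometric minimal immersion into $S^m$'' is precisely what simultaneously guarantees that $V_c^*(m,\Sigma^n)$ is well-defined (so Theorem~\ref{t:schro} can be invoked) and delivers the bound $V_c^*(m,\Sigma^n)\leqslant\mathit{Vol}_g(\Sigma^n)$ that produces the clean cancellation. Finally, I would remark, as in the comment following Theorem~\ref{t:schro}, that when the potential $\mathcal V$ is bounded the same argument yields the inequality without the non-negativity assumption on $\mathcal V$.
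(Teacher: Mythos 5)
Your proposal is correct and follows essentially the same route as the paper: invoke the El Soufi--Ilias bound $V_c(m,\Sigma^n)\leqslant\mathit{Vol}_g(\Sigma^n)$ for manifolds admitting an isometric minimal immersion into $S^m$, combine it with $V_c^*\leqslant V_c$, and substitute into Theorem~\ref{t:schro}. The additional remarks you make (that the isometric immersion itself furnishes a weakly conformal immersion, and the comment on bounded potentials) are sound but not needed beyond what the paper already records.
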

\begin{proof}
By the results of Li and Yau~\cite{LiYau82} and El Soufi and Ilias~\cite{ESI86}, we know that if $\Sigma^n$ admits an isometric minimal immersion into a unit sphere $S^m$, then $V_c(m,\Sigma^n)\leqslant\mathit{Vol}_g(\Sigma^n)$. Since the quantity $V_c^*(m,\Sigma^n)$ is not greater than $V_c(m,\Sigma^n)$, the statement becomes a direct consequence of Theorem~\ref{t:schro}.
\end{proof}
In particular, the statement above yields index estimates for minimally immersed hypersurfaces $\Sigma^n\subset S^{n+1}$. Recall that the index of a two-sided minimal hypersurface $\Sigma^n\subset S^{n+1}$ is defined as the number of negative eigenvalues of the stability operator
$$
J(u)=(-\Delta_\Sigma-n-\abs{S}^2)u,
$$
where $S$ is the shape operator of $\Sigma^n\subset S^{n+1}$ and $u$ is a function on $\Sigma^n$, see~\cite{CM11}. Thus, as a direct consequence of Corollary~\ref{t:min} we see that the index of any closed two-sided minimally immersed hypersurface $\Sigma^n\subset S^{n+1}$ satisfies the inequality
\begin{equation}
\label{index}
\Index (\Sigma^n)\geqslant C(n)\left(n+\frac{1}{\mathit{Vol}(\Sigma^n)}\int_{\Sigma^n}\abs{S}^2\mathit{dVol}_\Sigma\right)^{n/2}.
\end{equation}
To our knowledge this bound for the index is new. However, in dimension $n=2$ the result by Savo~\cite{Savo10} gives a stronger inequality:
$$
\Index (\Sigma^2)\geqslant C_3+C_4\int_{\Sigma^2}\abs{S}^2\mathit{dVol}_\Sigma,
$$
where $C_3$ and $C_4$ are universal constants. As we show in~\cite{Ko18-}, a version of  estimate~\eqref{index}  continues to hold for minimal hypersurfaces in rather general ambient manifolds. In particular, it clarifies some of the qualitative results in~\cite{BS}, which are known to hold for hypersurfaces in dimensions $2\leqslant n\leqslant 6$ only.  As was explained to us by Ben Sharp, it is unlikely that our index bound~\eqref{index} can be improved by replacing the $L_2$-norm of the shape operator by a stronger norm, for example, $L^{n/2}$-norm.

We end this section with the following estimate for the number of negative eigenvalues of Schr\"odinger operators on compact submanifolds in Euclidean spaces.
\begin{theorem}
\label{t:schro2}
Let $(\Sigma^n,g)$ be a closed Riemannian manifold, and $\phi:(\Sigma^n,g)\to (\mathbb R^m,g_{\mathit{can}})$ be an isometric immersion. Then for any non-negative potential $\mathcal V\in L^p(\Sigma^n)$, where $p>n/2$, the number of negative eigenvalues of the Schr\"odinger operator satisfies the inequality
$$
N(\mathcal V)\geqslant C(n,m)\left(\int_{\Sigma^n}\mathcal V\mathit{dVol}_g\right)/\left(\int_{\Sigma^n}\abs{H_\phi}^2\mathit{dVol}_g\right),
$$
where $H_\phi$ is the mean curvature vector, and $C(n,m)$ is a constant that depends on the dimensions $n$ and $m$ only.
\end{theorem}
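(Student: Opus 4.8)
The natural route is to treat Theorem~\ref{t:schro2} as the Schr\"odinger analogue of the Reilly-type bound of Theorem~\ref{t:rei} and to deduce it from the same ``more general result'' of Section~\ref{proof:rei}, which (as indicated in the excerpt) is proved for isometric immersions into space forms by transplanting the problem to the round sphere. Concretely: translate $\phi$ so that $\int_{\Sigma^n}\phi\,d\mathit{Vol}_g=0$, and compose it with a dilation $D_\mu\colon x\mapsto\mu x$ of $\mathbb R^m$ and the inverse stereographic projection $\pi\colon\mathbb R^m\to S^m$, obtaining a conformal immersion $\Psi_\mu:=\pi\circ D_\mu\circ\phi\colon\Sigma^n\to S^m$ with conformal factor $\rho_\mu(p)=2\mu\,(1+\mu^2\abs{\phi(p)}^2)^{-1}$. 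The mean curvature enters only through Reilly's identity $\Delta_\Sigma\phi=nH_\phi$: an integration by parts gives $\int_{\Sigma^n}\langle H_\phi,\phi\rangle\,d\mathit{Vol}_g=-\mathit{Vol}_g(\Sigma^n)$, hence by Cauchy--Schwarz $\int_{\Sigma^n}\abs{\phi}^2\,d\mathit{Vol}_g\geqslant\mathit{Vol}_g(\Sigma^n)^2/\int_{\Sigma^n}\abs{H_\phi}^2\,d\mathit{Vol}_g$, and this fixes the relevant range of the scale $\mu$ and thereby controls $\int_{\Sigma^n}\abs{\nabla\Psi_\mu}^2\,d\mathit{Vol}_g=n\int_{\Sigma^n}\rho_\mu^2\,d\mathit{Vol}_g$ by a multiple of $\int_{\Sigma^n}\abs{H_\phi}^2\,d\mathit{Vol}_g$ (for $n=2$, cf.~Lemma~1 in~\cite{LiYau82}).

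Next I would run the covering construction behind Theorems~\ref{t:cv} and~\ref{t:schro} --- the machinery of Korevaar and Grigor'yan--Netrusov--Yau, together with the test functions adapted to conformal diffeomorphisms of $S^m$ that the paper emphasises --- applied to the push-forward measures $(\Psi_\mu)_*\mathit{Vol}_g$ and $(\Psi_\mu)_*(\mathcal V\,d\mathit{Vol}_g)$ on $S^m$. For a suitable integer $N$ this yields $N$ pairwise disjoint test functions $u_1,\dots,u_N$, each supported over the $\Psi_\mu$-preimage of one of $N$ disjoint spherical annuli and satisfying $\int_{\Sigma^n}\abs{\nabla u_i}^2\,d\mathit{Vol}_g<\int_{\Sigma^n}\mathcal V u_i^2\,d\mathit{Vol}_g$, so that $N(\mathcal V)\geqslant N$ by the variational principle; the Dirichlet energies of the pulled-back cut-offs are estimated via $\rho_\mu$ and the annular geometry, and the choice of $\mu$ made above is exactly what makes the resulting $N$ proportional to $\int_{\Sigma^n}\mathcal V\,d\mathit{Vol}_g/\int_{\Sigma^n}\abs{H_\phi}^2\,d\mathit{Vol}_g$. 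If the machinery first produces, as in~\eqref{t:gny} and Theorem~\ref{t:schro}, a bound of degree $n/2$ in $\int_{\Sigma^n}\mathcal V\,d\mathit{Vol}_g$, one upgrades to the stated linear bound by combining it with the elementary observations that $t^{n/2}\geqslant t$ for $t\geqslant1$ and that $N(\mathcal V)\geqslant1$ for every non-negative $\mathcal V\not\equiv0$ (test the Rayleigh quotient of $-\Delta_\Sigma-\mathcal V$ on the constant function), and then rescaling the constant $C(n,m)$.

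The stereographic computation and the use of Reilly's identity are routine; the real difficulty is the second step, namely extracting from the covering and capacity bookkeeping a bound governed by the $L^2$-norm of $H_\phi$ rather than by its $L^\infty$-norm (which would give only the folkloric estimate recalled in the excerpt) or by the conformal volume $V_c^*(m,\Sigma^n)$ (which, for $n>2$, is \emph{not} controlled by $\int\abs{H_\phi}^2$, as the discussion following Corollary~\ref{cor:rei} explains). This is precisely what the general result of Section~\ref{proof:rei} supplies, and one should expect the subtle point there to be the interaction between the covering pieces and the push-forward measure $(\Psi_\mu)_*\mathit{Vol}_g$, which may concentrate near a point of $S^m$ --- so that either a covering by annuli of \emph{all} scales, or a family of stereographic projections, is needed rather than a single one. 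Two routine checks remain: $(\Psi_\mu)_*\mathit{Vol}_g$ and $(\Psi_\mu)_*(\mathcal V\,d\mathit{Vol}_g)$ must be non-atomic, which follows from a Lemma~\ref{l:bis}-type argument since $\Psi_\mu$ is a smooth conformal immersion, and the possible non-injectivity of $\Psi_\mu$ must be absorbed into the constants $C(n,m)$.
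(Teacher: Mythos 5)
Your high-level framework is right — GNY-style annuli on $S^m$, test functions adapted to conformal diffeomorphisms, pushforward of $\mathcal V\,d\mathit{Vol}_g$, and a variational-principle count of negative eigenvalues — and you correctly flag the central difficulty: the Dirichlet energy of the cut-offs must be controlled by $\int\abs{H_\phi}^2$ rather than by the conformal volume (which for $n>2$ is not controlled by $\int\abs{H_\phi}^2$), and a single stereographic projection cannot do this. But your sketch does not actually supply the ingredient that resolves this. The test functions $u_i=(\varphi_i\bar\varphi_i)\circ\psi$ involve the full family of conformal diffeomorphisms $\xi_{p_i,t_i}$ of $S^m$, one for each annulus, and their Dirichlet energies are $\int_{\Sigma^n}\abs{\nabla(\xi_{p_i,t_i}\circ\Pi\circ\phi)}^2\,d\mathit{Vol}_g$. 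Optimizing a single dilation parameter $\mu$ (your $\Psi_\mu$) controls the energy of one conformal immersion; it does nothing for all the $\xi_{p_i,t_i}$-twisted ones. What the paper actually uses is the El Soufi--Ilias identity (Proposition~\ref{f:esi}), which after integration yields
$$
\int_{\Sigma^n}\left(\abs{H_\phi}^2+R_\phi\right)\mathit{dVol}_g\geqslant\frac{1}{n}\sup_\Pi\int_{\Sigma^n}\abs{\nabla(\Pi\circ\phi)}^2\,\mathit{dVol}_g,
$$
the supremum running over \emph{all} conformal immersions $\Pi:M^m\to S^m$ (Corollary~\ref{c:rei}). Since each $\xi_{p_i,t_i}\circ\Pi$ is again such a conformal immersion, the numerator estimate $\int\abs{\nabla u_i}^2\leqslant 4n\int(\abs{H_\phi}^2+R_\phi)$ is \emph{independent of $i$ and of $k$}. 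This $k$-independence is precisely what produces the linear-in-$\int\mathcal V$ bound; no Hölder trick and no "upgrade" is needed.

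Your proposed upgrade from an $(\int\mathcal V/\int\abs{H_\phi}^2)^{n/2}$ bound to the linear bound is a dead end: you yourself observe that Theorem~\ref{t:schro} gives an $n/2$-power bound only in terms of $V_c^*$, which is not dominated by $\int\abs{H_\phi}^2$ when $n>2$, so the stronger bound you propose to downgrade from is not available (it is, in effect, the open improvement of Theorem~\ref{t:rei} flagged in the paper). Likewise, the integration-by-parts via $\Delta_\Sigma\phi=nH_\phi$ and the Cauchy--Schwarz step you sketch are the standard first-eigenvalue Reilly computation and play no role in the paper's higher-eigenvalue argument. To close the gap you should replace the $\mu$-optimization with Proposition~\ref{f:esi}/Corollary~\ref{c:rei}, note that $R_\phi\equiv 0$ for $M^m=\mathbb R^m$, and observe that the resulting numerator bound is uniform, so that choosing $k$ comparable to $\int\mathcal V/\int\abs{H_\phi}^2$ immediately forces $N(\mathcal V)\geqslant k+1$.
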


Theorem~\ref{t:schro2} is a by-product of the circle of ideas around the proof of Theorem~\ref{t:rei}. It is a partial case of a more general statement in Section~\ref{s:schro:proofs}, which in particular applies to submanifolds in constant curvature spaces.

\subsection{Organisation of the paper}
The paper is organised in the following way. In Section~\ref{proof:cv} we summarise the necessary results by Grigoryan, Netrusov, and Yau~\cite{GNY} and describe our construction of test-functions; these two ingredients are used throughout the rest of the paper. In this section we also prove Theorems~\ref{t:cv} and~\ref{t:cv}bis. Section~\ref{proof:rei} is devoted to the proof of the Reilly inequality for higher Laplace eigenvalues (Theorem~\ref{t:rei}) in a rather general setting.  In Section~\ref{cv:wci} we discuss the notion of a weakly conformal map, the related notion of weakly conformal volume, and prove Theorem~\ref{t:cv:nonor}. Finally, in the last section we collect the proofs of bounds for the number of negative eigenvalues of Schrodinger operators.

\smallskip
\noindent
{\em Acknowledgements.} I am grateful to Alessandro Savo and Ben Sharp for discussions on index bounds for minimal hypersurfaces. I am also grateful to Ahmad El Soufi, who passed away in December 2016, for posing a question leading to Theorem~\ref{t:rei}.

\section{Proofs of Theorems~\ref{t:cv} and~\ref{t:cv}bis}
\label{proof:cv}
\subsection{Disjoint charged sets in metric measure spaces}
The proofs of Theorems~\ref{t:cv} and~\ref{t:cv}$bis$, as well as other results in the paper, are based on the existence of a large number of disjoint sets carrying a controlled amount of mass in metric measure spaces. Below we briefly discuss a version of the statement used in the sequel. By $(X, d)$ we denote a separable metric space; the ball $B_r(a)$ in $X$ is a subset of the form $\{x\in X: d(x,a)<r\}$. By an annulus $A$ in a metric space $(X,d)$ we mean a subset of the following form
$$
\{x\in X: r\leqslant d(x,a)<R\},
$$
where $a\in X$ and $0\leqslant r<R<+\infty$. The real numbers $r$ and $R$ above are called the {\em inner} and {\em outer} radii respectively, and the point $a$ is the centre of an annulus $A$. We denote by $2A$ the annulus
$$
\{x\in X:r/2\leqslant d(x,a)<2R\}.
$$
Recall the following definition.
\begin{defin}
For an integer $N> 1$ a metric space $(X,d)$ satisfies the {\em global $N$-covering property}, if each ball $B_r(a)$ can be covered by $N$ balls of radius $r/2$.
\end{defin}
Developing the ideas of Korevaar~\cite{Kor}, Grigoryan and Yau~\cite{GY99} showed that on certain metric spaces with global covering properties for any non-atomic finite measure one can always find a collection of disjoint sets carrying a sufficient amount of measure. We will need the following explicit version of this statement due to Grigoryan, Netrusov, and Yau, see~\cite[Corollary~3.2]{GNY}.
\begin{prop}
\label{ds1}
Let $(X,d)$ be a separable metric space such that all balls $B_r(a)$ are precompact. Suppose that it satisfies the global $N$-covering property for some $N> 1$. Then for any finite non-atomic measure $\mu$ on $(X,d)$ and any positive integer $k$ there exists a collection of $k$ disjoint annuli $\{2A_i\}$ such that
$$
\mu(A_i)\geqslant c\mu(X)/k\qquad\text{for any }1\leqslant i\leqslant k,
$$
where $c$ is a positive constant that depends on $N$ only. (In fact, the constant $c$ can be chosen to be such that $c^{-1}=8N^{12}$.)
\end{prop}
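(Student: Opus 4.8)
The plan is to reconstruct the Korevaar--Grigor'yan--Yau mass-spreading argument. After rescaling one may assume $\mu(X)=1$; fixing a base point $x_0$ and using that $\mu$ is finite while $\bigcup_{R>0}B_R(x_0)=X$, one may further fix a precompact ball $B_0=B_{R_0}(x_0)$ with $\mu(B_0)\geqslant 1/2$ and discard the remaining mass, at the cost of an absolute factor. The elementary geometric fact I would use repeatedly is that around any point $a$ and radius $r$ the concentric dyadic annuli $A_j=\{x:r2^{-j-1}\leqslant d(x,a)<r2^{-j}\}$, $j\geqslant 0$, are pairwise disjoint and exhaust $B_r(a)\setminus\{a\}$ — so, by non-atomicity, $\sum_{j\geqslant 0}\mu(A_j)=\mu(B_r(a))$ — while the doubled annuli $2A_j$ and $2A_{j'}$ are disjoint whenever $|j-j'|\geqslant 3$, and $2A_j\subset B_r(a)$ for $j\geqslant 1$. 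Hence from any family of concentric annuli of index $\geqslant 1$ around one centre one can keep every third one and obtain annuli with pairwise disjoint doublings, all contained in the ambient ball.

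Next I would set up a recursive decomposition producing a finite rooted tree of balls with root $B_0$, running at each node a dichotomy against a mass threshold $v$ of order $1/k$. At a node $B=B_r(a)$ one inspects the concentric annuli $A_j$ about its centre: a suitably separated subfamily of those with $\mu(A_j)\geqslant v$ is \emph{harvested}, each harvested annulus having mass $\geqslant v$ and doubling inside $B$; the residual mass, which lies in the shells with $\mu(A_j)<v$ together with the shrinking cores $B_{r2^{-j}}(a)$ (whose mass tends to $\mu(\{a\})=0$), is distributed over a controlled number of balls of radii comparable to the $r2^{-j}$, obtained by applying the $N$-covering property a bounded number of times to each relevant thin shell; these balls, chosen pairwise well separated and sitting deep inside $B$, become the children, and the recursion continues on those whose mass is still $\geqslant v$. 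Non-atomicity is used both for the identity $\sum_j\mu(A_j)=\mu(B)$ and to perturb the dyadic radii slightly so as to avoid spheres $\{d(\cdot,a)=t\}$ of positive measure; this is where the hypothesis genuinely enters.

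Finally I would count. Every surviving node carries mass $\geqslant v$, the total harvested mass is — up to bounded multiplicative losses at each of the boundedly many levels along a branch — a definite fraction of $\mu(B_0)$, and each harvested annulus has mass $\geqslant v$; taking $v$ comparable to $1/k$ then yields at least $k$ annuli $A_i$ with $\{2A_i\}$ pairwise disjoint and $\mu(A_i)\geqslant cV/k$. The explicit value $c^{-1}=8N^{12}$ should emerge from optimising the parameters of the dichotomy — how many harvested concentric annuli to demand, how far to descend before the mass reconcentrates, how many children to retain — together with the several invocations of the $N$-covering property, which account for the power of $N$. The hard part is precisely this bookkeeping: arranging the recursion so that the doubled annuli coming from different branches and different depths are genuinely disjoint (the harvested annuli of a child must avoid the doublings of the harvested annuli of all its ancestors), so that $\mu$-mass is lost only in controlled amounts at every step, and so that the combined effect of branching factor, tree depth and the $N$-covering property is tracked tightly enough to produce the stated constant — it is keeping all of these under control simultaneously, rather than any one estimate, that is the real difficulty.
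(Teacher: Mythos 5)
The paper does not actually prove this proposition; it is stated as a quotation of~\cite[Corollary~3.2]{GNY}, and for the explicit value $c^{-1}=8N^{12}$ the reader is referred to an inspection of~\cite[Section~3]{GNY}, in particular the proof of Lemma~3.4 there. So there is no in-paper argument against which to check your construction line by line — the honest comparison is with the Grigor'yan--Netrusov--Yau decomposition itself.

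As an outline of that argument your proposal is pointing in the right direction: the reduction to a precompact ball of definite mass, the dyadic annuli about a centre with the observation that $\{2A_j\}$ and $\{2A_{j'}\}$ are disjoint once $|j-j'|\geqslant 3$ while still $2A_j\subset B_r(a)$ for $j\geqslant 1$, the threshold $v$ of order $1/k$, and the recursive tree-of-balls driven by the $N$-covering property are all genuinely present in~\cite[Section~3]{GNY}. You also correctly isolate where non-atomicity enters (to discard the mass of the shrinking cores and of individual points).

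That said, what you have written is a plan, not a proof, and you say so yourself in the last paragraph. Two of the gaps you wave at are real and not merely bookkeeping. First, the global $N$-covering property only says a ball can be covered by $N$ balls of half the radius; it gives you no control whatsoever on their mutual positions, so you cannot simply ``choose'' the children ``pairwise well separated and sitting deep inside $B$'' — arranging any kind of separation requires an additional Vitali/greedy-selection argument, which is precisely one of the places where~\cite{GNY} works harder than your sketch suggests. Second, your description of what gets passed to the children is not coherent as stated: the harvested annuli, the light annuli, and the single point $a$ exhaust $B_r(a)$, so there is no natural residual region ``deep inside $B$'' that both carries mass and avoids the doublings of the harvested annuli; the way~\cite{GNY} ensures that descendants' annuli do not collide with ancestors' is built into how the recursive decomposition is set up, not something one can graft on after the fact. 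Until those two points are repaired the counting at the end — ``bounded multiplicative losses at each of the boundedly many levels'' — does not get off the ground, because you have not shown the recursion terminates at bounded depth nor that mass losses per level are multiplicatively bounded. The constant $8N^{12}$ would only fall out of a correct, fully specified recursion; as it stands you have identified the ingredients but not assembled them.
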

The statement describing the value of the constant $c$ in the proposition follows by examining the main argument in~\cite[Section 3]{GNY}; in particular, see the proof of ~\cite[Lemma~3.4]{GNY}.

\subsection{Construction of test-functions for the Rayleigh quotient}
Let $S^m\subset\mathbb R^{m+1}$ be a unit round sphere. For a given point $p\in S^m$ and a real number $t>0$ we denote by $\xi_{p,t}:S^m\to S^m$ the conformal diffeomorphism 
\begin{equation}
\label{def:xi}
\xi_{p,t}=\phi^{-1}_p\circ s_t\circ\phi_p,
\end{equation}
where $\phi_p:S^{m}\backslash\{p\}\to\mathbb R^m$ is a stereographic projection from a point $p$ to the orthogonal subspace 
$$
L_p=\{x\in\mathbb R^{m+1}: x\cdot p=0\},
$$
 and $s_t:\mathbb R^m\to\mathbb R^m$ is the scaling by $t>0$, that is the map $v\mapsto tv$. Let $x_p:S^m\to\mathbb R$ be a  function $x_p(x)=x\cdot p$, where $x\in S^m$. It is straightforward to see that for any $t>0$ the diffeomorphism $\xi_{p,t}$ fixes the anti-podal points $p$ and $-p$, which are the only extremal points of the function $x_p$. Further, for any $q\in S^m\backslash\{p,-p\}$ the function $x_p$ is strictly increasing along the lines $t\mapsto\xi_{p,t}(q)$, which after a re-parametrisation coincide with the flow lines of $\grad x_p$. 

We start with constructing Lipschitz functions supported in metric balls $B_{2R}(p)\subset S^m$. Our functions are modelled on the restriction of $x_p$ to the hemisphere
$$
S^+_p=\{q\in S^m: q\cdot p>0\},
$$ 
which is a positive smooth function that vanishes on the boundary. For a given $R\in (0,\pi/2)$ we choose the value $t=t(R)>0$ such that $\xi_{p,t}$ maps the ball $B_{2R}(p)$ onto the hemisphere $S^+_p$, and define a function $\varphi_{R,p}$ on $S^m$ by the formula
\begin{equation}
\label{testf1}
\varphi_{R,p}(q)=\left\{
\begin{array}{cc}
x_p(\xi_{p,t}(q)), & \text{ if~~} q\in B_{2R}(p),\\
0, & \text{ if~~} q\notin B_{2R}(p).
\end{array}
\right.
\end{equation}
Clearly, $\varphi_{R,p}$ is a non-negative Lipschitz function on $S^m$; it is supported in the ball $B_{2R}(p)$ and is not greater than $1$ everywhere. For the sequel we will need the following auxiliary lemma.
\begin{lemma}
\label{l1}
For any $R\in (0,\pi/2)$ and any point $p\in S^m$ the function $\varphi_{R,p}$ defined by~\eqref{testf1} satisfies the relation
$$
\varphi_{R,p}(q)\geqslant\frac{3}{5}\qquad\text{for any }q\in B_{R}(p).
$$
\end{lemma}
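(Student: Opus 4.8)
The plan is to do the whole computation in the stereographic coordinates from $p$. Normalising so that $p$ is a ``north pole'' and identifying $L_p$ with $\mathbb R^m$, I would first record the inverse stereographic formula
\[
\phi_p^{-1}(w)=\left(\frac{2w}{\abs{w}^2+1},\ \frac{\abs{w}^2-1}{\abs{w}^2+1}\right),\qquad w\in\mathbb R^m,
\]
so that $x_p(\phi_p^{-1}(w))=(\abs{w}^2-1)/(\abs{w}^2+1)=1-2/(\abs{w}^2+1)$ is an \emph{increasing} function of $\abs{w}$. Since $\xi_{p,t}$ acts in these coordinates as the dilation $w\mapsto tw$, for $q\in B_{2R}(p)$ with $v:=\phi_p(q)$ we get $\varphi_{R,p}(q)=x_p(\xi_{p,t}(q))=(t^2\abs{v}^2-1)/(t^2\abs{v}^2+1)$, and the problem is reduced to controlling the Euclidean radius $\abs{v}$ and the dilation factor $t=t(R)$.

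Next I would compute the $\phi_p$-images of geodesic balls around $p$. The sphere $\partial B_r(p)=\{x_p=\cos r\}$ is sent to the Euclidean sphere of radius $\sqrt{(1+\cos r)/(1-\cos r)}=\cot(r/2)$, whence $\phi_p(B_r(p))=\{\abs{w}>\cot(r/2)\}$; in particular $\phi_p(B_{2R}(p))=\{\abs{w}>\cot R\}$, $\phi_p(B_R(p))=\{\abs{w}>\cot(R/2)\}$, and $\phi_p(S^+_p)=\phi_p(B_{\pi/2}(p))=\{\abs{w}>1\}$. The defining requirement that $\xi_{p,t}$ carry $B_{2R}(p)$ onto $S^+_p$ then forces $t\cot R=1$, i.e.\ $t(R)=\tan R$.

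Finally, for $q\in B_R(p)$ we have $\abs{v}=\abs{\phi_p(q)}>\cot(R/2)$, so by the monotonicity above
\[
\varphi_{R,p}(q)\geqslant\frac{\rho^2-1}{\rho^2+1},\qquad \rho:=\tan R\cdot\cot(R/2).
\]
A half-angle computation gives $\rho=2\cos^2(R/2)/\cos R=(1+\cos R)/\cos R=1+\sec R$, which is strictly greater than $2$ for every $R\in(0,\pi/2)$ since $\sec R>1$; as $\rho\mapsto(\rho^2-1)/(\rho^2+1)$ is increasing, we conclude $\varphi_{R,p}(q)\geqslant(2^2-1)/(2^2+1)=3/5$. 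The argument is elementary throughout; the only point requiring care is the trigonometric bookkeeping, and the one substantive observation is that the relevant radius equals $1+\sec R$ and hence is always at least $2$ — this is exactly what pins down the constant $3/5$ (and the bound is sharp, approached as $R\to0$ with $q\to\partial B_R(p)$).
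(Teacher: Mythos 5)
Your proof is correct and follows essentially the same route as the paper: both compute $t(R)=\tan R$, identify the relevant Euclidean radius $\rho(R)=1+\sec R>2$, and then read off the bound from $x_p(\phi_p^{-1}(w))=(\abs{w}^2-1)/(\abs{w}^2+1)$ at $\abs{w}=2$. You are in fact slightly more careful than the paper's wording, which describes $\phi_p(B_r(p))$ as a Euclidean ball and the relevant cap as ``contained in $\phi_p^{-1}(D_2(0))$'' -- since the projection is from $p$, these should be the exteriors $\{\abs{w}>\cot(r/2)\}$ and $\phi_p^{-1}(\{\abs{w}>2\})$, exactly as you write; the radii and the final constant are nonetheless the same.
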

\begin{proof}
Let $\phi_p$ be a stereographic projection from $p\in S^m$ to the linear subspace orthogonal to $p$, and $t=t(R)>0$ be a real number such that the image $\xi_{p,t}(B_{2R}(p))$ coincides with the upper hemi-sphere $S_p^+$. By a standard argument based on similarity of triangles, it is straightforward to see that the image of the ball $B_{2R}(p)$ under $\phi_p$ is a Euclidean ball in $L_p$ of radius $(\sin2R)/(1-\cos 2R)$, and hence, the value $t=t(R)$ equals
$$
t(R)=\frac{1-\cos 2R}{\sin 2R}=\tan R.
$$
Also, the image of $B_R(p)$ under $\phi_p$ is a Euclidean ball in $L_p$ of radius $(\sin R)/(1-\cos R)$, 
and thus, the image of the ball $\xi_{p,t}(B_R(p))$ under $\phi_p$ is the Euclidean ball in $L_p$ of radius 
$$
\rho(R)=t(R)\frac{\sin R}{1-\cos R}.
$$
After elementary transformations, we obtain
$$
\rho(R)=1+\frac{1}{\cos R}.
$$
Note that $\rho$ as a function of $R\in (0,\pi/2)$ is increasing, and 
$$
\rho(R)\to 2\quad\text{as}\quad R\to 0+\qquad\text{and}\qquad \rho(R)\to+\infty\quad\text{as}\quad R\to\pi/2-.
$$
Thus, the metric ball $\xi_{p,t}(B_R(p))$ is always contained in the ball $\phi_{p}^{-1}(D_2(0))$ in $S^m$, where $D_2(0)$ is a Euclidean ball centred at the origin of radius $2$, and we conclude that the value of the coordinate $x_p$ on $\xi_{p,t}(B_R(p))$ is at least the value of $x_p$ on the boundary $\phi_{p}^{-1}(\partial D_2(0))$. A direct computation shows that this value equals $3/5$, and thus,
$$
\varphi_{R,p}(q)=x_p\circ\xi_{p,t}(q)\geqslant\frac{3}{5}\quad\text{for any}\quad q\in B_R(p), 
$$
where $R\in (0,\pi/2)$.
 \end{proof}

In a similar fashion, for a given $r\in (0,\pi)$ we choose the value $\tau>0$ such that the set $\xi_{p,\tau}(B_{r/2}(p))$ coincides with the upper hemisphere $S_p^+$, and define a function $\bar\varphi_{r,p}$ by the formula
\begin{equation}
\label{testf2}
\bar\varphi_{r,p}(q)=\left\{
\begin{array}{cc}
0, & \text{ if~~} q\in B_{r/2}(p),\\
-x_p(\xi_{p,\tau}(q)), & \text{ if~~} q\notin B_{r/2}(p).
\end{array}
\right.
\end{equation}
It is a non-negative Lipschitz function, which is supported in the complement of the ball $B_{r/2}(p)$ and is not greater than $1$ everywhere. The proof of the following statement is similar to Lemma~\ref{l1}, and therefore, is omitted. 
\begin{lemma}
\label{l2}
For any $r\in (0,\pi)$ and any point $p\in S^m$ the function $\bar\varphi_{r,p}$ defined by~\eqref{testf2} satisfies the relation
$$
\bar\varphi_{r,p}(q)\geqslant\frac{3}{5}\qquad\text{for any }q\notin B_{r}(p).
$$
\end{lemma}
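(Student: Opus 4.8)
The plan is to run the proof of Lemma~\ref{l1} once more in mirror image: with the roles of $p$ and $-p$ (equivalently, of the two hemispheres) interchanged, and with every estimate reversed so as to bound the coordinate $x_p$ from above rather than from below. Here the good region is the complement of $B_r(p)$, playing the role that $B_R(p)$ played for $\varphi_{R,p}$, and the support region is the complement of $B_{r/2}(p)$; the conformal diffeomorphism $\xi_{p,\tau}$ is again fixed by a single scaling condition. Throughout I will use that the composition $x_p\circ\phi_p^{-1}$ is the radial function $v\mapsto(\abs{v}^2-1)/(\abs{v}^2+1)$, which is strictly increasing in $\abs{v}$; this is exactly what underlies the computation $x_p=3/5$ on $\phi_p^{-1}(\partial D_2(0))$ in the proof of Lemma~\ref{l1}.

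The first step is to pin down $\tau$. By the similar-triangles argument of Lemma~\ref{l1}, the requirement $\xi_{p,\tau}(B_{r/2}(p))=S^+_p$ amounts to the single relation $\tau\cot(r/4)=1$, i.e. $\tau=\tan(r/4)$ --- the analogue of $t(R)=\tan R$ there. The second step is to locate the image of the larger exterior: any $q$ with $d(q,p)\geqslant r$ has $\abs{\phi_p(q)}\leqslant\cot(r/2)$, so $\xi_{p,\tau}$ carries $S^m\setminus B_r(p)$ into the region where the stereographic radius is at most
$$
\sigma(r)=\tau\cot(r/2)=\tan(r/4)\cot(r/2)=\frac{\cos(r/2)}{1+\cos(r/2)}.
$$
The only thing that genuinely needs to be checked --- the counterpart of the monotonicity of $\rho(R)$ in Lemma~\ref{l1} --- is that $\sigma$ is decreasing on $(0,\pi)$ with $\sigma(r)\to1/2$ as $r\to0+$, so that $\sigma(r)\leqslant1/2$ for every admissible $r$; this is immediate since $c\mapsto c/(1+c)$ is increasing and $\cos(r/2)\leqslant1$. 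Consequently $\xi_{p,\tau}(S^m\setminus B_r(p))$ is always contained in the metric ball $\phi_p^{-1}(\overline{D_{1/2}(0)})$ centred at $-p$, where $D_{1/2}(0)\subset L_p$ is the Euclidean ball of radius $1/2$.

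The final step is to evaluate $x_p$ on this ball. Since $x_p\circ\phi_p^{-1}$ is increasing in $\abs{v}$, its maximum over $\phi_p^{-1}(\overline{D_{1/2}(0)})$ is attained on the bounding sphere $\phi_p^{-1}(\partial D_{1/2}(0))$ and equals $((1/2)^2-1)/((1/2)^2+1)=-3/5$. Hence $x_p\circ\xi_{p,\tau}\leqslant-3/5$ on $S^m\setminus B_r(p)$, and by the definition~\eqref{testf2} of $\bar\varphi_{r,p}$ we get $\bar\varphi_{r,p}(q)=-x_p(\xi_{p,\tau}(q))\geqslant3/5$ for every $q\notin B_r(p)$, as claimed. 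I do not anticipate any real obstacle: the whole argument is a sign-reversed, ``outside-in'' copy of Lemma~\ref{l1}, and the single calculation worth isolating is the uniform bound $\sigma(r)\leqslant1/2$.
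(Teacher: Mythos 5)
Your proof is correct, and it is exactly the "mirror image of Lemma~\ref{l1}" that the paper has in mind when it omits the argument. You correctly identify the scaling parameter $\tau=\tan(r/4)$, the stereographic radius bound $\abs{\phi_p(q)}\leqslant\cot(r/2)$ for $q\notin B_r(p)$, the product $\sigma(r)=\tau\cot(r/2)=\cos(r/2)/(1+\cos(r/2))\leqslant 1/2$, and the resulting bound $x_p\circ\phi_p^{-1}\leqslant -3/5$ on the ball $\{\abs{v}\leqslant 1/2\}$; the sign reversal then gives $\bar\varphi_{r,p}\geqslant 3/5$ outside $B_r(p)$. This matches both the intent and the level of detail of the omitted proof.
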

Now let $A\subset S^m$ be an annulus $B_{R}(p)\backslash B_r(p)$, where $0\leqslant r<R<\pi/2$ and $p\in S^m$, and $2A$ be an annulus $B_{2R}(p)\backslash B_{r/2}(p)$. We define a function $u_A$ on $S^m$ by setting it to be the product $\varphi_{R,p}\bar\varphi_{r,p}$. Clearly, it is a Lipschitz function that is supported in $2A$ and satisfies the  relations $0\leqslant u_A\leqslant 1$ and 
$$
u_A(q)\geqslant\frac{9}{25}\qquad\text{for any }q\in A.
$$
We use the pull-backs of such functions as test-functions for the Rayleigh quotient to complete the proof of Theorem~\ref{t:cv} below.

\subsection{Proof of Theorem~\ref{t:cv}: final argument}
Recall that the Rayleigh quotient $\mathcal R(u)$ on $\Sigma^n$ is defined as
$$
\mathcal R(u)=\left(\int_{\Sigma^n}\abs{\nabla u}^2d\mathit{Vol}_g\right)/\left(\int_{\Sigma^n}u^2d\mathit{Vol}_g\right),
$$
where $u$ is an admissible test-function. By the variational principle, see~\cite{Cha}, for a proof of the theorem it is sufficient for any $k\geqslant 1$ and any conformal immersion $\phi:\Sigma^n\to S^m$ to construct $k+1$ linearly independent Lipschitz test-functions  $u_i$ such that for any $u\in\mathit{Span}(u_i)$ the following inequality holds
\begin{equation}
\label{t:cv:punch}
\mathcal R(u)\leqslant C(n,m)\mathit{Vol}(\Sigma^n,g)^{-2/n} V_c(m,\phi)^{2/n}k^{2/n},
\end{equation}
where the quantity $V_c(m,\phi)$ is defined in~\eqref{cv:def}.

We view a unit sphere $S^m\subset\mathbb R^{m+1}$ as a metric space with the intrinsic distance function $d$, that is the distance between two given points on it is the infimum of lengths of piece-wise smooth paths in $S^m$ joining these points. Using standard formulae for the volumes of metric balls on $S^m$, for example in~\cite{Cha}, it is straightforward to see that the metric space $(S^m,d)$ satisfies the global $N$-covering property with $N=9^m$. We endow $(S^m,d)$ with a measure $\mu$ obtained as the push-forward of the volume measure $\mathit{Vol}_g$ on $\Sigma^n$ under a given conformal immersion $\phi:\Sigma^n\to S^m$. Since $\Sigma^n$ is closed, it is straightforward to see that the pre-image $\phi^{-1}(p)$ of any point $p\in S^m$ is either empty or a finite set, and hence, the measure $\mu$ is non-atomic. Thus, Proposition~\ref{ds1} applies and we can find a collection $\{A_i\}$ of $2(k+1)$ annuli on the sphere such that
\begin{equation}
\label{denom1}
\mu(A_i)\geqslant c\mu(S^m)/(2k+2)\geqslant c\mu(S^m)/(4k)\qquad\text{for any }i=1,\ldots,2k+2,
\end{equation}
and the annuli $\{2A_i\}$ are disjoint. The last property implies that 
$$
\sum_{i=1}^{2k+2}\mu(2A_i)\leqslant\mu (S^m),
$$
and hence, there exists at least $k+1$ sets $2A_i$ such that
\begin{equation}
\label{denom2}
\mu(2A_i)\leqslant \mu(S^m)/(k+1)\leqslant \mu(S^m)/k.
\end{equation}
Without loss of generality, we may assume that these inequalities hold for $i=1,\ldots,k+1$. We denote by $u_i$ the Lipschitz test-functions $u_{A_i}\circ\phi$, where $u_{A_i}$ are constructed above. In more detail, let $p_i$, $r_i$, and $R_i$ be the centre, the inner radius and the outer radius of $A_i$ respectively. Denote by $\varphi_i$ the functions $\varphi_{R_i,p_i}$, and by $\bar\varphi_i$ the function $\bar\varphi_{r_i,p_i}$, see the construction above. Then the function 
$$
u_i=\left\{
\begin{array}{cc}
(\varphi_i\bar\varphi_i)\circ\phi, &\text{ if~~ }r_i>0,\\
\varphi_i\circ\phi, & \text{ if~~ }r_i=0,
\end{array}
\right.
$$ 
can be used as a test-function for the Rayleight quotient on $\Sigma^n$. Since the $u_i$'s are supported in the disjoint sets $\phi^{-1}(2A_i)$, they are $W^{1,2}$-orthogonal, and it is sufficient to prove inequality~\eqref{t:cv:punch} for all $u_i$, where $i=1,\ldots,k+1$. 

To prove inequality~\eqref{t:cv:punch} for each $u_i$, we first estimate the numerator in the Rayleigh quotient. Below we assume that $r_i>0$; the case $r_i=0$ can be treated similarly. Since the functions $\varphi_i$ and $\bar\varphi_i$ are not greater than $1$, we obtain
\begin{multline*}
\int_{\Sigma^n}\abs{\nabla u_i}^2d\mathit{Vol}_g\leqslant 2\left(\int_{\phi^{-1}(2A_i)}\abs{\nabla(\varphi_i\circ\phi)}^2d\mathit{Vol}_g+\int_{\phi^{-1}(2A_i)}\abs{\nabla(\bar\varphi_i\circ\phi)}^2d\mathit{Vol}_g\right)\\ \leqslant 2\left(\left(\int_{\Sigma^n}\abs{\nabla(\varphi_i\circ\phi)}^nd\mathit{Vol}_g\right)^{2/n}+\left(\int_{\Sigma^n}\abs{\nabla(\bar\varphi_i\circ\phi)}^nd\mathit{Vol}_g\right)^{2/n}\right)\left(\int_{\phi^{-1}(2A_i)} 1d\mathit{Vol}_g\right)^{1-2/n},
\end{multline*}
where in the last relation we used the H\"older inequality. Using the definition of the function $\varphi_i$, we can estimate the first integral above in the following way
\begin{multline*}
\int_{\Sigma^n}\abs{\nabla(\varphi_i\circ\phi)}^nd\mathit{Vol}_g\leqslant \int_{\Sigma^n}\abs{\nabla(x_{p_i}\circ(\xi_{p_i,t_i}\circ\phi))}^nd\mathit{Vol}_g\leqslant\int_{\Sigma^n}\abs{\nabla(\xi_{p_i,t_i}\circ\phi)}^nd\mathit{Vol}_g\\ =n^{n/2}\mathit{Vol}(\Sigma^n,(\xi_{p_i,t_i}\circ\phi)^*g_{\mathit{can}})\leqslant n^{n/2}V_c(m,\phi),
\end{multline*}
where $\xi_{p_i,t_i}$ is a conformal transformation of $S^m$, $g_{\mathit{can}}$ is a canonical round metric on $S^m$, and in the equality above we used the relation
$$
(\xi_{p_i,t_i}\circ\phi))^*g_{\mathit{can}}=\frac{1}{n}\abs{\nabla(\xi_{p_i,t_i}\circ\phi)}^2g.
$$
Similarly, we obtain
$$
\int_{\Sigma^n}\abs{\nabla(\bar\varphi_i\circ\phi)}^nd\mathit{Vol}_g\leqslant n^{n/2}V_c(m,\phi).
$$
Combining these relations  with the estimate for the integral $\int_{\Sigma^n}\abs{\nabla u_i}^2d\mathit{Vol}_g$ above, we further obtain
\begin{multline}
\label{num}
\int_{\Sigma^n}\abs{\nabla u_i}^2d\mathit{Vol}_g\leqslant 4nV_c(m,\phi)^{2/n}\mu(2A_i)^{1-2/n}\leqslant 4nV_c(m,\phi)^{2/n}(\mu(S^m)/k)^{1-2/n}\\ =4nV_c(m,\phi)^{2/n}(\mathit{Vol}_g(\Sigma^n)/k)^{1-2/n},
\end{multline}
where in the second inequality we used relation~\eqref{denom2}. Using Lemmas~\ref{l1} and~\ref{l2} together with relation~\eqref{denom1}, we can also estimate the denominator of the Rayleigh quotient:
\begin{equation}
\label{denom3}
\int_{\Sigma^n}u_i^2d\mathit{Vol}_g\geqslant \frac{81}{625}\mathit{Vol}_g(\phi^{-1}(A_i))=\frac{81}{625}\mu(A_i)\geqslant\frac{81}{2500}c\mathit{Vol}_g(\Sigma^n)/k,
\end{equation}
where the constant $c$ depends only on $m$. Relations~\eqref{num} and~\eqref{denom3} now immediately imply inequality~\eqref{t:cv:punch} for all $u_i$, where $i=1,\ldots,k+1$.
\qed

\subsection{Proof of Theorem~\ref{t:cv}bis}
We start with the following auxiliary statement. Its proof is rather starightforward, however, we state it as a lemma for the convenience of references.
\begin{lemma}
\label{l:bis}
Let $(\Sigma^n,g)$ be a closed Riemannian manifold, and $\phi:\Sigma^n\to M^m$ be a weakly conformal immersion in the sense of Definition~\ref{def:wci}. Then the push-forward measure $\mu=\phi_*\mathit{Vol}_g$ is non-atomic, that is $\mu(p)=0$ for any point $p\in S^m$.
\end{lemma}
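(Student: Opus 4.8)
The plan is to exploit the two defining features of a weakly conformal immersion separately: the singular set $T$ is negligible because it has zero Lebesgue (hence zero $\mathit{Vol}_g$) measure, and off $T$ the map is an honest conformal immersion, so the fibres there are discrete. First I would note that $\mu=\phi_*\mathit{Vol}_g$ is a finite measure, since $\Sigma^n$ is closed, and fix an arbitrary point $p\in M^m$. As $\mathit{Vol}_g$ is absolutely continuous with respect to Lebesgue measure in every coordinate chart and $T$ has zero Lebesgue measure, we get $\mathit{Vol}_g(T)=0$, and therefore $\mu(p)=\mathit{Vol}_g(\phi^{-1}(p))=\mathit{Vol}_g\bigl(\phi^{-1}(p)\cap(\Sigma^n\backslash T)\bigr)$. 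Thus it suffices to show that the fibre $F_p:=\phi^{-1}(p)\cap(\Sigma^n\backslash T)$ over $p$ in the regular part has zero volume.

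Next I would analyse $F_p$ using that $\left.\phi\right|(\Sigma^n\backslash T)$ is a smooth conformal immersion on the open manifold $\Sigma^n\backslash T$. In particular it is an immersion, hence locally injective: by the constant rank theorem every $q\in\Sigma^n\backslash T$ has a neighbourhood in $\Sigma^n\backslash T$ on which $\phi$ is injective. Consequently every point of $F_p$ is isolated in $F_p$, so $F_p$ is a discrete subset of $\Sigma^n\backslash T$ (it is moreover closed there, being $\phi^{-1}(p)$). Since $\Sigma^n$ is a closed manifold it is a separable metric space, hence second countable, and so is its open subset $\Sigma^n\backslash T$; a discrete subset of a second countable space is at most countable. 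Therefore $F_p$ is countable, $\mathit{Vol}_g(F_p)=0$, and combined with $\mathit{Vol}_g(T)=0$ this yields $\mu(p)=0$, proving the lemma.

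I do not expect any genuine obstacle here: the argument is essentially a bookkeeping of the hypotheses in Definition~\ref{def:wci}. The one step that deserves care is the passage from ``conformal immersion off $T$'' to ``$F_p$ is at most countable'', which relies on local injectivity of immersions together with second countability of $\Sigma^n$; and one should make sure the reduction modulo $T$ is legitimate, which it is precisely because $T$ has zero Lebesgue measure so that $\mathit{Vol}_g$ does not see it. Note that the Lipschitz hypothesis on $\phi$ is not needed for non-atomicity itself, only the structure on $\Sigma^n\backslash T$ and the measure-zero property of $T$ matter.
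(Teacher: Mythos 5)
Your argument is correct and is essentially the same as the paper's: both reduce to $\phi^{-1}(p)\setminus T$ using that $T$ has zero measure, then invoke local injectivity of the immersion $\left.\phi\right|(\Sigma^n\setminus T)$ via the constant rank theorem to conclude this fibre is discrete and hence countable, so of zero volume. The only superficial difference is that the paper phrases it as a proof by contradiction while you argue directly, and you spell out the second-countability step that the paper leaves implicit.
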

\begin{proof}
Suppose the contrary, and let $p\in M^m$ be a point such that $\mu(p)>0$, that is, the pre-image $\phi^{-1}(p)$ has positive measure $\mathit{Vol}_g$. Since the singular set $T$ of $\phi$ has zero measure, we conclude that the value $\mathit{Vol}_g(\phi^{-1}(p)\backslash T)$ is positive. Since $\phi$ is a smooth immersion on $\Sigma^n\backslash T$, the constant rank theorem implies that it is locally injective, and hence, the set $\phi^{-1}(p)\backslash T$ is either finite or consists of a countable set of points that accumulate to $T$. Thus, its measure equals zero, and we arrive at a contradiction.
\end{proof}

The lemma above guarantees that Proposition~\ref{ds1} applies to the metric space $(S^m,d)$ equipped with the measure $\mu$, and the argument in the proof of Theorem~\ref{t:cv} carries over. In more detail, by Proposition~\ref{ds1} we can find a collection $\{A_i\}$ of $k+1$ annuli such that the annuli $\{2A_i\}$ are disjoint and inequalities~\eqref{denom1} and~\eqref{denom2} hold. Since $\phi:\Sigma^n\to S^m$ is a Lipschitz map, the test-functions $u_i$ constructed in the proof of Theorem~\ref{t:cv} are also Lipschitz. In particular, we see that
$$
\int_{\Sigma^n}\abs{\nabla u_i}^2\mathit{dVol}_g=\int_{\Sigma^n\backslash T}\abs{\nabla u_i}^2\mathit{dVol}_g
$$
where $T$ is a zero measure singular set of $\phi$. Thus, the numerator of the Rayleigh quotient of $u_i$ is determined by its values on $\Sigma^n\backslash T$ only, and since $\phi$ is a smooth conformal immersion on this set, we see that all estimates for the Dirichlet energy of $u_i$ in the proof of Theorem~\ref{t:cv} carry over. So do the estimates for the denominator of the Rayleigh quotient $\mathcal R(u_i)$.
\qed

\section{Reilly inequalities for higher eigenvalues}
\label{proof:rei}

\subsection{Submanifolds in the spaces that admit conformal immersions to a sphere}
The purpose of this section is to discuss a circle of ideas related to the proof of Theorem~\ref{t:rei}. In fact, we prove a version of the theorem in a much more general setting. Before stating it, we introduce the following notation. Given a Riemannian manifold $(M^m,h)$ and an isometric immersion $\phi:\Sigma^n\to M^m$ by $R_\phi$ we denote the quantity
\begin{equation}
\label{n:rphi}
R_\phi(x)=\frac{1}{n(n-1)}\sum_{i\ne j}K_M(d\phi(e_i),d\phi(e_j)),
\end{equation}
where the symbol $K_M$ denotes the sectional curvature of $(M^m,h)$, and $(e_i)$ is an orthonormal basis of $T_x\Sigma^n$ at a point under consideration. In particular, if $(M^m,h)$ is a space of constant curvature $\kappa$, then for any isometric immersion $\phi:\Sigma^n\to M^m$ the function $R_\phi$ is constant, $R_\phi\equiv\kappa$.

Theorem~\ref{t:rei} is a consequence of the following more general result.
\begin{theorem}
\label{t:grei}
Let $(M^m,h)$ be a (possibly non-complete) Riemannian manifold that admits a conformal immersion into a unit sphere $S^m\subset\mathbb R^{m+1}$ of the same dimension $m\geqslant 2$. Then for any closed Riemannian manifold $(\Sigma^n,g)$ of dimension $n\geqslant 2$ and any isometric immersion $\phi:\Sigma^n\to M^m$ the Laplace eigenvalues of $\Sigma^n$ satisfy inequalities
$$
\lambda_k(\Sigma^n,g)\leqslant C(n,m)\left(\frac{1}{\mathit{Vol}_g(\Sigma^n)}\int_{\Sigma^n}\left(\abs{H_\phi}^2+R_\phi\right)\mathit{dVol}_{g}\right)k
$$
for any $k\geqslant 1$, where $H_\phi$ is the mean curvature vector of $\phi$, and $R_\phi$ is given by relation~\eqref{n:rphi}.
\end{theorem}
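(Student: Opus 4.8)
The plan is to run the test-function machinery of Section~\ref{proof:cv}, replacing the conformal-volume bound on the Dirichlet energy of the test-functions by a bound expressed through the extrinsic geometry of $\phi$. Fix a conformal immersion $I\colon M^m\to S^m$ and set $F=I\circ\phi\colon\Sigma^n\to S^m$; since $\phi$ is an isometric immersion and $I$ an immersion, $F$ is a smooth conformal immersion, so by Lemma~\ref{l:bis} the push-forward $\mu=F_*\mathit{Vol}_g$ is a finite non-atomic measure on the sphere, which satisfies the global $N$-covering property with $N=9^m$. Applying Proposition~\ref{ds1} yields, for each $k\geqslant 1$, a family of $k+1$ annuli $A_i$ with disjoint doublings $2A_i$ and $\mu(A_i)\geqslant c(m)\mathit{Vol}_g(\Sigma)/k$; as in the proof of Theorem~\ref{t:cv} I take the test-functions $u_i=u_{A_i}\circ F$, which are Lipschitz, supported in the disjoint sets $F^{-1}(2A_i)$, and satisfy $\int_\Sigma u_i^2\,\mathit{dVol}_g\geqslant\frac{81}{625}\mu(A_i)\geqslant c'(m)\mathit{Vol}_g(\Sigma)/k$ by Lemmas~\ref{l1}--\ref{l2}. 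By the variational principle the theorem then reduces to the uniform numerator bound $\int_\Sigma\abs{\nabla u_i}^2\,\mathit{dVol}_g\leqslant C(n,m)\int_\Sigma(\abs{H_\phi}^2+R_\phi)\,\mathit{dVol}_g$, which gives $\lambda_k(\Sigma^n,g)\leqslant\max_i\mathcal R(u_i)\leqslant C'(n,m)\,k\,\mathit{Vol}_g(\Sigma)^{-1}\int_\Sigma(\abs{H_\phi}^2+R_\phi)\,\mathit{dVol}_g$.

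Since $u_{A_i}$ is a product of functions $\pm x_p\circ\xi_{p,s}$ with $x_p=\langle\cdot,p\rangle$ $1$-Lipschitz on $S^m\subset\mathbb R^{m+1}$ and $\abs{x_p\circ\xi_{p,s}}\leqslant 1$, and since $\abs{\nabla((x_p\circ\xi_{p,s})\circ F)}\leqslant\abs{\nabla(\xi_{p,s}\circ F)}$, the numerator bound follows from
\[
\int_\Sigma\abs{\nabla(\xi\circ I\circ\phi)}^2\,\mathit{dVol}_g\leqslant C(n,m)\int_\Sigma\bigl(\abs{H_\phi}^2+R_\phi\bigr)\,\mathit{dVol}_g\qquad\text{for every conformal diffeomorphism }\xi\text{ of }S^m.
\]
Here, since $\xi\circ I\colon M^m\to S^m$ is conformal with factor $e^{2w}$ and $\phi$ is isometric, $\abs{\nabla(\xi\circ I\circ\phi)}^2=n\,e^{2w\circ\phi}$, so the left-hand side equals $n\int_{S^m}\psi_\xi^2\,d\nu$, where $\psi_\xi$ is the conformal factor of $\xi$ on $S^m$ and $\nu=F_*(e^{2w_I\circ\phi}\,\mathit{dVol}_g)$ is the push-forward of the induced conformal density. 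An elementary estimate of the ``M\"obius energy'' $\sup_\xi\int_{S^m}\psi_\xi^2\,d\nu$ shows it is controlled by the $2$-regularity constant of $\nu$; thus it suffices to prove
\[
\nu\bigl(B_\rho(q)\bigr)\leqslant C(n,m)\,\rho^2\int_\Sigma\bigl(\abs{H_\phi}^2+R_\phi\bigr)\,\mathit{dVol}_g\qquad\text{for all }q\in S^m,\ \rho>0.
\]

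This last density estimate is the heart of the matter. Pulling $B_\rho(q)$ back by $I$, it is equivalent to a weighted mass bound for the varifold $\phi_*\mathit{Vol}_g$ in $M$, and here one uses the classical Reilly-type ingredient $\Delta_\Sigma\phi=n\vec H_\phi$ (or its analogue for immersions into $M$, which involves the second fundamental form of $\phi$ and, through the ambient curvature, the quantity $R_\phi$), together with the monotonicity formula for immersions with $L^2$ mean curvature; the Gauss equation $\mathrm{Scal}_\Sigma=n(n-1)R_\phi+\abs{H_\phi}^2-\abs{A_\phi}^2$ is what packages the ambient-curvature contribution into the scale-invariant quantity $\int_\Sigma(\abs{H_\phi}^2+R_\phi)\,\mathit{dVol}_g$, and the conformality of $I$ lets one transfer this between $M$ and the round sphere. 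For $n=2$ the density estimate reduces to Simon's area/diameter inequality combined with the conformal invariance of the Willmore integrand, which is exactly the case where the resulting inequality is compatible with Weyl's law. I expect this density estimate to be the main obstacle: it is the only place where the genuine extrinsic geometry enters, and for $n\geqslant 3$ the loss from $n$-regularity to $2$-regularity is what forces the factor $k$ in the statement instead of $k^{2/n}$. Finally, specialising $M$ to a space of constant curvature $\kappa$ — where $R_\phi\equiv\kappa$ — gives the eigenvalue inequality for isometric immersions into space forms, and $M=\mathbb R^m$ with $I$ the stereographic projection (so that $R_\phi\equiv 0$) recovers Theorem~\ref{t:rei}.
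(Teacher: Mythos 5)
The skeleton of your proposal — take a conformal immersion $I:M^m\to S^m$, set $F=I\circ\phi$, run Proposition~\ref{ds1} on the push-forward measure to get $k+1$ well-separated annuli, use the Lipschitz test functions $u_{A_i}\circ F$ of Section~\ref{proof:cv}, and reduce the theorem to a uniform Dirichlet-energy bound $\int_\Sigma\abs{\nabla(\xi\circ I\circ\phi)}^2\,\mathit{dVol}_g\leqslant C(n,m)\int_\Sigma(\abs{H_\phi}^2+R_\phi)\,\mathit{dVol}_g$ over all conformal diffeomorphisms $\xi$ of $S^m$ — is exactly the paper's, and the identification of this numerator bound as the only place where extrinsic geometry enters is correct.

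Where the proposal goes off the rails is in how you propose to establish that bound. You want to convert it into a $2$-regularity estimate $\nu(B_\rho(q))\leqslant C\rho^2\int_\Sigma(\abs{H_\phi}^2+R_\phi)$ for the push-forward area density $\nu$, and then extract this from a monotonicity formula or Simon's area/diameter inequality. This is both unnecessary and, for $n\geqslant 3$, most likely not achievable under an $L^2$ hypothesis on $H_\phi$: the monotonicity formula for $n$-varifolds naturally requires $H_\phi\in L^n$, and there is no reason for the push-forward of an $n$-dimensional area measure under a conformal immersion with merely $L^2$-bounded mean curvature to be Ahlfors $2$-regular. You yourself flag this step as ``the main obstacle,'' and it is one you cannot cross this way.

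The paper instead gets the numerator bound for free from a pointwise conformal-change identity of El Soufi and Ilias (Proposition~\ref{f:esi}): if $\Pi:M^m\to S^m$ is conformal with $\Pi^*g_{\mathit{can}}=e^f h$, then for any isometric immersion $\phi:\Sigma^n\to M^m$ one has
\[
\abs{H_\phi}^2+R_\phi=e^{f\circ\phi}\bigl(\abs{H_{\Pi\circ\phi}}^2+1\bigr)+\frac{n-2}{4n}\abs{\nabla(f\circ\phi)}^2-\frac{1}{n}\Delta(f\circ\phi).
\]
Integrating over the closed manifold $\Sigma^n$ kills the Laplacian term, the gradient term is nonnegative for $n\geqslant 2$, and $\abs{H_{\Pi\circ\phi}}^2\geqslant 0$, giving
\[
\int_\Sigma\bigl(\abs{H_\phi}^2+R_\phi\bigr)\,\mathit{dVol}_g\geqslant\int_\Sigma e^{f\circ\phi}\,\mathit{dVol}_g=\frac{1}{n}\int_\Sigma\abs{\nabla(\Pi\circ\phi)}^2\,\mathit{dVol}_g,
\]
which is Corollary~\ref{c:rei}. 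Since the right-hand side of the theorem is independent of $\Pi$, and $\xi\circ I$ is simply another conformal immersion of $M^m$ into $S^m$ for every conformal diffeomorphism $\xi$ of $S^m$, this inequality already holds uniformly over all $\xi$: no density estimate, no Möbius-energy reduction, no monotonicity formula. That elementary integration is the missing idea; once you replace your density-estimate step with it, the rest of your argument goes through verbatim and reproduces the paper's proof.
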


The theorem above implies the following version for higher Laplace eigenvalues of the classical Reilly inequalities~\cite{Re77,ESI92} for the first eigenvalue of submanifolds in constant curvature spaces.
\begin{corollary}
Let $(M^m,h)$ be a complete simply connected space of constant curvature $\kappa\in\mathbb R$. Then for any closed Riemannian manifold $(\Sigma^n,g)$ of dimension $n\geqslant 2$ and any isometric immersion $\phi:\Sigma^n\to M^m$ the Laplace eigenvalues of $\Sigma^n$ satisfy the inequalities
$$
\lambda_k(\Sigma^n,g)\leqslant C(n,m)\left(\frac{1}{\mathit{Vol}_g(\Sigma^n)}\int_{\Sigma^n}\left(\abs{H_\phi}^2+\kappa\right)\mathit{dVol}_{g}\right)k
$$
for any $k\geqslant 1$, where $H_\phi$ is the mean curvature vector of $\phi$.
\end{corollary}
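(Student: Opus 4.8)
The plan is to deduce the corollary directly from Theorem~\ref{t:grei} by verifying its two hypotheses in each of the three cases for the constant curvature $\kappa$. Recall that a complete simply connected Riemannian manifold $(M^m,h)$ of constant curvature $\kappa$ is isometric to the round sphere of radius $1/\sqrt{\kappa}$ when $\kappa>0$, to the Euclidean space $\mathbb R^m$ when $\kappa=0$, and to the hyperbolic space of curvature $\kappa$ when $\kappa<0$. First I would check that in each case $M^m$ admits a conformal immersion into the unit sphere $S^m\subset\mathbb R^{m+1}$ of the same dimension $m$: for $\kappa>0$ a homothety (a constant rescaling of the metric) is such a map; for $\kappa=0$ the inverse stereographic projection $\mathbb R^m\to S^m$ does the job; and for $\kappa<0$ one first uses the Poincar\'e ball model to identify hyperbolic space conformally with the open unit ball in $\mathbb R^m$, and then composes with the inverse stereographic projection, obtaining a conformal immersion of the hyperbolic space onto an open hemisphere of $S^m$. (Note that Theorem~\ref{t:grei} is phrased so as to allow non-complete targets, which is exactly what is needed to accommodate such a hemisphere.)

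Next I would recall, as already observed in the text just after~\eqref{n:rphi}, that for any isometric immersion $\phi:\Sigma^n\to M^m$ into a space of constant curvature $\kappa$ the quantity $R_\phi$ defined by~\eqref{n:rphi} is identically equal to $\kappa$: every sectional curvature $K_M(d\phi(e_i),d\phi(e_j))$ equals $\kappa$, and summing over the $n(n-1)$ ordered pairs $(i,j)$ with $i\ne j$ and multiplying by the normalising factor $1/(n(n-1))$ returns $\kappa$.

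Finally, with both hypotheses of Theorem~\ref{t:grei} in place — the existence of a conformal immersion $M^m\to S^m$ of equal dimension and the identity $R_\phi\equiv\kappa$ — the claimed inequality follows at once by substituting $R_\phi=\kappa$ into the conclusion of Theorem~\ref{t:grei}. The only step requiring any thought is exhibiting the conformal immersion in the hyperbolic case $\kappa<0$, but this is classical, so I do not anticipate a genuine obstacle; the remainder of the argument is purely formal bookkeeping.
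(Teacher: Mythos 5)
Your proof is correct and takes essentially the same route the paper intends: the paper presents this as an immediate consequence of Theorem~\ref{t:grei}, having already noted after~\eqref{n:rphi} that $R_\phi\equiv\kappa$ in constant curvature and having remarked in the introduction that the relevant space forms admit conformal immersions into $S^m$. Your only addition is to spell out the explicit conformal immersions (homothety, inverse stereographic projection, Poincar\'e ball model composed with inverse stereographic projection), which the paper leaves implicit; these are correct, and correctly you observe that the hyperbolic case is exactly why Theorem~\ref{t:grei} is stated for possibly non-complete targets.
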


One of the key ingredients in the proof of Theorem~\ref{t:grei} is the following relation, found by El Soufi and Ilias~\cite[Prop.~2]{ESI92}.
\begin{prop}
\label{f:esi}
Let $(M^m,h)$ be a (possibly non-complete) Riemannian manifold of dimension $m\geqslant 2$, and $\Pi:M^m\to S^m\subset\mathbb R^{m+1}$ be a conformal immersion, $\Pi^*g_{\mathit{can}}=e^fh$. Then for any immersion $\phi:\Sigma^n\to M^m$ the following relation holds:
$$
\abs{H_\phi}^2+R_\phi=e^{f\circ\phi}\left(\abs{H_{\Pi\circ\phi}}^2+1\right)+\frac{n-2}{4n}\abs{\nabla (f\circ\phi)}^2-\frac{1}{n}\Delta(f\circ\phi),
$$
where the norm $\abs{\nabla (f\circ\phi)}$ and the Laplacian $\Delta(f\circ\phi)$ are taken with respect to the metric $\phi^*h$.
\end{prop}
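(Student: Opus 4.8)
The plan is to deduce the identity from the standard conformal transformation laws for the Levi--Civita connection and the curvature tensor, after first reducing it to a statement about a conformal change of the \emph{ambient} metric on $M^m$. Since $\Pi:M^m\to S^m$ is a conformal immersion between manifolds of equal dimension $m$, it is a local diffeomorphism, and in fact a local isometry of $(M^m,e^f h)$ onto an open subset of $(S^m,g_{\mathit{can}})$. Hence all local Riemannian invariants of $\Pi\circ\phi:\Sigma^n\to S^m$ coincide with those of $\phi$ regarded as an immersion into $(M^m,e^f h)$: in particular $\abs{H_{\Pi\circ\phi}}^2$ equals $\abs{H_\phi}^2$ computed for the metric $e^f h$, and since $(M^m,e^f h)$ has all sectional curvatures equal to $1$, the quantity $\abs{H_{\Pi\circ\phi}}^2+1$ is exactly $\bigl(\abs{H_\phi}^2+R_\phi\bigr)$ evaluated for the metric $e^f h$. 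Thus the proposition is equivalent to the following statement: writing $h'=e^{2u}h$ on $M^m$ with $u=\tfrac12 f$, for every isometric immersion $\phi:(\Sigma^n,g)\to (M^m,h)$ one has
\[
\bigl(\abs{H_\phi}^2+R_\phi\bigr)_h\;=\;e^{2(u\circ\phi)}\bigl(\abs{H_\phi}^2+R_\phi\bigr)_{h'}\;+\;\frac{n-2}{n}\abs{\nabla(u\circ\phi)}^2\;+\;\frac{2}{n}\,\Delta(u\circ\phi),
\]
with $\nabla$ and $\Delta=\Div\grad$ taken with respect to $g$; substituting $u=\tfrac12 f$ and adopting the sign convention of~\cite{ESI92} for $\Delta$ (for which $-\Delta$ is the nonnegative operator) yields the form stated in the proposition.

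First I would record how the mean curvature transforms. From the formula $\nabla'_X Y=\nabla_X Y+(Xu)Y+(Yu)X-h(X,Y)\grad_h u$ relating the Levi--Civita connections of $h$ and $h'$, together with the fact that a conformal change preserves orthogonality (so the normal bundle of $\phi$ and the normal projection $(\cdot)^{\perp}$ are unchanged), one gets $II'(X,Y)=II(X,Y)-h(X,Y)\,(\grad_h u)^{\perp}$ for $X,Y$ tangent to $\Sigma^n$. Tracing over a $g$-orthonormal frame, rescaling to a frame orthonormal for $g'=e^{2(u\circ\phi)}g$, and taking $h'$-norms leads to
\[
\abs{H_\phi}^2_h-e^{2(u\circ\phi)}\abs{H_\phi}^2_{h'}=2\,h\bigl(H_\phi,\grad_h u\bigr)-\abs{(\grad_h u)^{\perp}}^2_h.
\]

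In parallel I would compute how the averaged ambient sectional curvature transforms. Inserting the conformal transformation law for the curvature tensor (conveniently written through the Kulkarni--Nomizu product of $h$ with $\nabla^h du-du\otimes du+\tfrac12\abs{\nabla^h u}^2_h\,h$), evaluating it on the $h$-orthonormal pairs $d\phi(e_i),d\phi(e_j)$, summing over $i\neq j$ and dividing by $n(n-1)$ gives
\[
R_\phi^h-e^{2(u\circ\phi)}R_\phi^{h'}=\frac{2}{n}\,\trace_{T\Sigma}(\nabla^h du)-\frac{2}{n}\abs{\nabla(u\circ\phi)}^2+\abs{\nabla^h u}^2_h,
\]
where $\trace_{T\Sigma}(\nabla^h du)=\sum_i\nabla^h du\,(d\phi(e_i),d\phi(e_i))$ is the partial trace of the ambient Hessian of $u$ over $d\phi(T\Sigma)$.

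Finally I would add the two displayed identities and simplify using two classical facts for the immersion $\phi$: the composition formula $\Delta(u\circ\phi)=\trace_{T\Sigma}(\nabla^h du)+n\,h(H_\phi,\grad_h u)$ (a consequence of the Gauss formula), which eliminates the partial-trace term together with the term $h(H_\phi,\grad_h u)$; and the orthogonal splitting $\abs{\nabla^h u}^2_h=\abs{\nabla(u\circ\phi)}^2+\abs{(\grad_h u)^{\perp}}^2_h$ with $(\grad_h u)^{\top}=\grad_g(u\circ\phi)$, which eliminates the normal-gradient term. What survives is $\tfrac{2}{n}\Delta(u\circ\phi)+\tfrac{n-2}{n}\abs{\nabla(u\circ\phi)}^2$, which is the right-hand side of the reduced identity above. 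I expect the only genuine obstacle to be computational bookkeeping: getting the conformal transformation law for sectional curvature right, consistently splitting the ambient gradient and Hessian of $u$ into tangential and normal parts along $\Sigma^n$, and keeping the sign and normalization conventions coherent throughout (the Laplacian as $\pm\Div\grad$, the mean curvature vector as a trace versus an average of the second fundamental form) — a single slip in any of these produces exactly the kind of spurious factor or sign that the clean final formula forbids.
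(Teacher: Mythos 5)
Your proof is correct and follows essentially the same route as the paper, which cites the argument of El~Soufi and Ilias~\cite{ESI92} based on standard conformal-change formulae for the second fundamental form and curvature together with the Gauss equations. The preliminary reduction --- regarding $\Pi\circ\phi$ as $\phi$ viewed as an immersion into $(M^m,e^f h)$, a space locally isometric to the round sphere, so that $\abs{H_{\Pi\circ\phi}}^2+1=(\abs{H_\phi}^2+R_\phi)_{e^fh}$ --- is a clean way to package the computation, and you correctly flag the sign convention for $\Delta$ (the identity as displayed uses the geometer's nonnegative Laplacian, so your $\Delta=\Div\grad$ flips the sign of the last term, consistently with what you derive).
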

The proof of Prop.~\ref{f:esi} uses standard formulae for the behaviour of the geometric quantities under a conformal change of the metric together with the Gauss equations, see details in~\cite{ESI92}. Recall that the energy density of a map between Riemannian manifolds is defined as the Hilbert-Schmidt norm of its differential. In particular, for the conformal map $\Pi\circ\phi:(\Sigma^n,\phi^*h)\to (S^m,g_{\mathit{can}})$ it satisfies the relation
$$
\abs{\nabla (\Pi\circ\phi)}^2:=\sum_{i=1}^{m+1}\abs{\nabla (\Pi\circ\phi)^i}^2=ne^{f\circ\phi},
$$
where we view $S^m$ as a unit sphere in $\mathbb R^{m+1}$, and use the notation from Prop.~\ref{f:esi}, that is $\Pi^*g_{\mathit{can}}=e^fh$. Thus, as a consequence of Prop.~\ref{f:esi}, we arrive at the following statement.
\begin{corollary}
\label{c:rei}
Let $(M^m,h)$ be a (possibly non-complete) Riemannian manifold that admits a conformal immersion into a unit sphere $S^m\subset\mathbb R^{m+1}$ of the same dimension $m\geqslant 2$. Then for any closed Riemannian manifold $(\Sigma^n,g)$ of dimension $n\geqslant 2$ and any isometric immersion $\phi:\Sigma^n\to M^m$, the following inequality holds:
\begin{equation}
\label{f2:esi}
\int\limits_{\Sigma^n}\left(\abs{H_\phi}^2+R_\phi\right)\mathit{dVol}_g\geqslant\frac{1}{n}\sup_\Pi\int\limits_{\Sigma^n}\abs{\nabla (\Pi\circ\phi)}^2\mathit{dVol}_g,
\end{equation}
where the supremum is taken over all conformal immersions $\Pi:(M^m,h)\to (S^m,g_{\mathit{can}})$.
\end{corollary}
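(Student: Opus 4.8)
The plan is to integrate the pointwise identity of Proposition~\ref{f:esi} over the closed manifold $\Sigma^n$ and then discard the non-negative and exact terms. First I would fix an arbitrary conformal immersion $\Pi:(M^m,h)\to(S^m,g_{\mathit{can}})$ with $\Pi^*g_{\mathit{can}}=e^fh$, and note that since $\phi$ is an isometric immersion we have $g=\phi^*h$; hence all the gradients, Laplacians and volume forms appearing in Proposition~\ref{f:esi} are taken with respect to $g$ itself, and $f\circ\phi$ is a smooth function on $\Sigma^n$. Integrating the identity
$$
\abs{H_\phi}^2+R_\phi=e^{f\circ\phi}\left(\abs{H_{\Pi\circ\phi}}^2+1\right)+\frac{n-2}{4n}\abs{\nabla(f\circ\phi)}^2-\frac{1}{n}\Delta(f\circ\phi)
$$
against $\mathit{dVol}_g$, the last term contributes zero because $\Sigma^n$ is closed and $\Delta$ is the Laplace--Beltrami operator, so its integral vanishes by the divergence theorem.

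Next I would use that $n\geqslant 2$, so the coefficient $\frac{n-2}{4n}$ is non-negative, whence $\int_{\Sigma^n}\frac{n-2}{4n}\abs{\nabla(f\circ\phi)}^2\mathit{dVol}_g\geqslant 0$; combined with $\abs{H_{\Pi\circ\phi}}^2\geqslant 0$ this yields
$$
\int_{\Sigma^n}\left(\abs{H_\phi}^2+R_\phi\right)\mathit{dVol}_g\geqslant\int_{\Sigma^n}e^{f\circ\phi}\mathit{dVol}_g.
$$
I would then invoke the identity $\abs{\nabla(\Pi\circ\phi)}^2=ne^{f\circ\phi}$ recorded just above the statement --- valid because $\Pi\circ\phi:(\Sigma^n,g)\to(S^m,g_{\mathit{can}})$ is conformal with conformal factor $e^{f\circ\phi}$ --- to rewrite the right-hand side as $\frac1n\int_{\Sigma^n}\abs{\nabla(\Pi\circ\phi)}^2\mathit{dVol}_g$.

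Finally, since this chain of inequalities holds for every conformal immersion $\Pi$ while the left-hand side does not depend on $\Pi$, I would pass to the supremum over all such $\Pi$ on the right, obtaining exactly~\eqref{f2:esi}. No serious obstacle arises here: the only points to verify are that the relation $g=\phi^*h$ legitimises reading all the conformal-change quantities of Proposition~\ref{f:esi} on $\Sigma^n$, that $\Sigma^n$ being closed makes the $\Delta(f\circ\phi)$ term drop out, and that the sign of $\frac{n-2}{4n}$ permits discarding the gradient term; the supremum is then automatically finite, being bounded above by the fixed quantity $\int_{\Sigma^n}(\abs{H_\phi}^2+R_\phi)\mathit{dVol}_g$.
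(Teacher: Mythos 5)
Your proof is correct and is the argument the paper intends: integrate the identity of Proposition~\ref{f:esi} over the closed manifold (noting $g=\phi^*h$, so all quantities live on $(\Sigma^n,g)$), kill the $\Delta(f\circ\phi)$ term by the divergence theorem, drop the non-negative terms $\frac{n-2}{4n}\abs{\nabla(f\circ\phi)}^2$ and $e^{f\circ\phi}\abs{H_{\Pi\circ\phi}}^2$, substitute $\abs{\nabla(\Pi\circ\phi)}^2=ne^{f\circ\phi}$, and pass to the supremum. The paper does not write out this short chain explicitly, but your steps are exactly what the sentence ``as a consequence of Prop.~\ref{f:esi}'' together with the displayed energy-density identity is asking the reader to do.
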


\subsection{Proof of Theorem~\ref{t:grei}}
We follow the line of argument in the proof of Theorem~\ref{t:cv}. Let $\phi:(\Sigma^n,g)\to (M^m,h)$ be a given isometric immersion. For a proof of the theorem it is sufficient for any integer $k\geqslant 1$ to construct $k+1$ linearly independent Lipschitz test-functions $u_i$ such that for any $u\in\mathit{Span}(u_i)$ the following inequality holds
\begin{equation}
\label{aux:ray}
\mathcal R(u)\leqslant C(n,m)\left(\frac{1}{\mathit{Vol}_g(\Sigma^n)}\int_{\Sigma^n}\left(\abs{H_\phi}^2+R_\phi\right)\mathit{dVol}_{g_\Sigma}\right)k,
\end{equation}
where $\mathcal R(u)$ is the Rayleigh quotient on $\Sigma^n$.

Let $\Pi:(M^m,h)\to (S^m,g_{\mathit{can}})$ be a conformal immersion, and denote by $\psi$ the composition $\Pi\circ\phi:\Sigma^n\to S^m$. We view the unit sphere $S^m\subset\mathbb R^{m+1}$ as a metric space with the intrinsic distance function $d$. By $\mu$ we denote a measure on $(S^m,d)$ obtained by pushing forward the volume measure $\mathit{Vol}_g$ on $\Sigma^n$ under the conformal immersion $\psi:\Sigma^n\to S^m$. By Proposition~\ref{ds1} we find a collection $\{A_i\}$ of $k+1$ annuli on $(S^m,d)$ such that the annuli $\{2A_i\}$ are disjoint, and
\begin{equation}
\label{denom:rei}
\mu(A_i)\geqslant c\mu(S^m)/(k+1)\geqslant c\mu(S^m)/(2k),
\end{equation}
for any $i=1,\ldots,k+1$, where the constant $c$ depends on the dimension $m$ only. By $u_i$ we denote the Lipschitz test-functions $u_{A_i}\circ\psi$, constructed in Section~\ref{proof:cv}. Since the functions $u_i$ are supported in the disjoint sets $\psi^{-1}(2A_i)$, they are $W^{1,2}$-orthogonal, and it is sufficient to prove inequality~\eqref{aux:ray} for each $u_i$. 

First, we estimate the numerator in the Rayleigh quotient:
\begin{equation}
\label{rei:n}
\int_{\Sigma^n}\abs{\nabla u_i}^2d\mathit{Vol}_g\leqslant 2\left(\int_{\Sigma^n}\abs{\nabla(\varphi_i\circ\psi)}^2d\mathit{Vol}_g+\int_{\Sigma^n}\abs{\nabla(\bar\varphi_i\circ\psi)}^2d\mathit{Vol}_g\right),
\end{equation}
where the functions $\varphi_i=\varphi_{R_i,p_i}$, and $\bar\varphi_i=\bar\varphi_{R_i,p_i}$ are given by formulae~\eqref{testf1} and~\eqref{testf2} respectively. Using the definition of the function $\varphi_i$, we can bound the first integral in the following way
\begin{multline*}
\int_{\Sigma^n}\abs{\nabla(\varphi_i\circ\psi)}^2d\mathit{Vol}_g\leqslant \int_{\Sigma^n}\abs{\nabla(x_{p_i}\circ(\xi_{p_i,t_i}\circ\psi))}^2d\mathit{Vol}_g\leqslant\int_{\Sigma^n}\abs{\nabla(\xi_{p_i,t_i}\circ\Pi\circ\phi)}^2d\mathit{Vol}_g\\ \leqslant n\int_{\Sigma^n}\left(\abs{H_\phi}^2+R_\phi\right)\mathit{dVol}_g,
\end{multline*}
where $\xi_{p_i,t_i}$ is a conformal transformation of $S^m$, and in the last inequality we used Corollary~\ref{c:rei}. In a similar way, we can estimate the second term in~\eqref{rei:n}, and summing up, obtain
\begin{equation}
\label{rei:n2}
\int_{\Sigma^n}\abs{\nabla u_i}^2\mathit{dVol}_g\leqslant 4n\int_{\Sigma^n}\left(\abs{H_\phi}^2+R_\phi\right)\mathit{dVol}_g.
\end{equation}
Now, using relation~\eqref{denom:rei}, we estimate the denominator in the Rayleigh quotient:
\begin{equation}
\label{rei:den}
\int_{\Sigma^n}u_i^2d\mathit{Vol}_g\geqslant \frac{81}{625}\mathit{Vol}_g(\psi^{-1}(A_i))=\frac{81}{625}\mu(A_i)\geqslant\frac{81}{1250}c\mathit{Vol}_g(\Sigma^n)/k,
\end{equation}
where the constant $c$ depends only on $m$. The combination of inequalities~\eqref{rei:n2} and~\eqref{rei:den} yields estimate~\eqref{aux:ray} for the Rayleigh quotient $\mathcal R(u)$ for all functions $u_i$,  where $i=1,\ldots, k+1$, and proves the theorem.
\qed

\section{Conformal volume and weakly conformal immersions}
\label{cv:wci}
\subsection{Preliminary considerations}
We start with a few remarks on the definition of a weakly conformal immersion, see Definition~\ref{def:wci}. A useful model example of such a map that is not smooth is the map $\psi_p:S^m\to S^m$ of the unit sphere $S^m\subset\mathbb R^{m+1}$ to itself defined as
$$
\psi_p(x)=\left\{
\begin{array}{cc}
x, & \text{ if }x\cdot p\geqslant 0;\\
r_px, & \text{ if }x\cdot p\leqslant 0.
\end{array}
\right.
$$
Here $p\in S^m$ is a fixed point, and $r_p:S^m\to S^m$ is the reflection that fixes the orthogonal complement $L_p=\{x: x\cdot p=0\}$. A similar map is used in the proof of Theorem~\ref{t:cv:nonor}.

Recall that for a weakly conformal map $\phi:(\Sigma^n,g)\to (S^m,g_{\mathit{can}})$ we define its $m$-dimensional conformal volume by the formula
$$
V_c(m,\phi)=\sup\left\{\mathit{Vol}(\Sigma^n\backslash T, (s\circ\phi)^*g_{\mathit{can}})~~ |~ s\text{ is a conformal diffeomorphism of }S^m\right\},
$$
where $T$ is the singular set of $\phi$. Note that for a smooth conformal immersion $\psi:\Sigma^n\backslash T\to S^m$ the relation
$$
\psi^*g_{\mathit{can}}=\frac{1}{n}\abs{\nabla\psi}^2g
$$
holds, and hence, the volume $\mathit{Vol}(\Sigma^n\backslash T, (s\circ\phi)^*g_{\mathit{can}})$ is given by the integral
$$
n^{-n/2}\int_{\Sigma^n\backslash T}\abs{\nabla (s\circ\phi)}^n\mathit{dVol}_g=n^{-n/2}\int_{\Sigma^n}\abs{\nabla (s\circ\phi)}^n\mathit{dVol}_g,
$$
where in the last relation we used the fact that the norm $\abs{\nabla(s\circ\phi)}$ is a bounded function on $\Sigma^n\backslash T$.

The following statement is a basis for our estimates for the conformal volume.
\begin{lemma}
\label{l:degree}
Let $(\Sigma^n,g)$ and $(M^n,h)$ be two Riemannian manifolds of the same dimension, and $\varphi:\Sigma^n\to M^n$ be a weakly conformal immersion with a singular set $T$. Suppose that for any $p\in\varphi(\Sigma^n\backslash T)$ the pre-image $\varphi^{-1}(p)$ has at most $d$ points. Then for any $m>0$ such that there exists a weakly conformal immersion of $M^n$ to a unit sphere $S^m$, the following relation holds:
$$
V_c^*(m,\Sigma^n)\leqslant d V_c^*(m,M^n).
$$
\end{lemma}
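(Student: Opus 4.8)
The plan is to show that precomposing an arbitrary weakly conformal immersion $\phi:M^n\to S^m$ with $\varphi$ yields a weakly conformal immersion $\phi\circ\varphi:\Sigma^n\to S^m$ whose conformal volume is at most $d\,V_c(m,\phi)$; the lemma then follows by taking the supremum over conformal diffeomorphisms of $S^m$ and the infimum over $\phi$, using that $V_c^*$ is an infimum over all weakly conformal immersions.

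First I would check that $\phi\circ\varphi$ is a weakly conformal immersion. Let $S$ be the singular set of $\phi$ and set $T'=T\cup\varphi^{-1}(S)$. The set $T$ is null by hypothesis. On $\Sigma^n\backslash T$ the map $\varphi$ is a smooth conformal immersion between manifolds of equal dimension, hence a local diffeomorphism by the inverse function theorem, and in particular locally bi-Lipschitz; therefore it pulls the null set $S$ back to a set that meets $\Sigma^n\backslash T$ in a null set, so $T'$ is null. Moreover $\phi\circ\varphi$ is Lipschitz as a composition of Lipschitz maps, and its restriction to $\Sigma^n\backslash T'$ is a composition of two smooth conformal immersions $\Sigma^n\backslash T'\xrightarrow{\varphi}M^n\backslash S\xrightarrow{\phi}S^m$ (note $\varphi$ maps $\Sigma^n\backslash T'$ into $M^n\backslash S$), hence itself a smooth conformal immersion. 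Thus $\phi\circ\varphi$ is a weakly conformal immersion with singular set contained in $T'$, and in particular $V_c^*(m,\Sigma^n)\leqslant V_c(m,\phi\circ\varphi)$.

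Next I would fix a conformal diffeomorphism $s$ of $S^m$ and put $\tilde h=(s\circ\phi)^*g_{\mathit{can}}$, which is a genuine Riemannian metric on $M^n\backslash S$ since $s\circ\phi$ is a smooth conformal immersion there. By functoriality $(s\circ\phi\circ\varphi)^*g_{\mathit{can}}=\varphi^*\tilde h$ on $\Sigma^n\backslash T'$, and since $\varphi$ is a local diffeomorphism onto $M^n\backslash S$ there, it is a local isometry $(\Sigma^n\backslash T',\varphi^*\tilde h)\to(M^n\backslash S,\tilde h)$. The hypothesis that each fibre $\varphi^{-1}(p)$ with $p\in\varphi(\Sigma^n\backslash T)$ has at most $d$ points, together with $\Sigma^n\backslash T'\subseteq\Sigma^n\backslash T$, shows $\varphi$ is at most $d$-to-one on $\Sigma^n\backslash T'$, so the area formula gives
\begin{multline*}
\mathit{Vol}(\Sigma^n\backslash T',(s\circ\phi\circ\varphi)^*g_{\mathit{can}})
=\int_{M^n\backslash S}\#\bigl(\varphi^{-1}(p)\cap(\Sigma^n\backslash T')\bigr)\,d\mathit{Vol}_{\tilde h}(p)\\
\leqslant d\,\mathit{Vol}(M^n\backslash S,(s\circ\phi)^*g_{\mathit{can}})\leqslant d\,V_c(m,\phi).
\end{multline*}
Taking the supremum over $s$ gives $V_c(m,\phi\circ\varphi)\leqslant d\,V_c(m,\phi)$, whence $V_c^*(m,\Sigma^n)\leqslant d\,V_c(m,\phi)$; taking the infimum over all weakly conformal immersions $\phi:M^n\to S^m$ yields $V_c^*(m,\Sigma^n)\leqslant d\,V_c^*(m,M^n)$.

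I expect the only real obstacle to be the measure-theoretic bookkeeping: verifying that $T'$ is genuinely null (via the local bi-Lipschitz property of $\varphi$ away from $T$) and justifying the area formula for the local isometry $\varphi|_{\Sigma^n\backslash T'}$ — in particular the measurability of the fibre-counting function $p\mapsto\#(\varphi^{-1}(p)\cap(\Sigma^n\backslash T'))$ and its bound by $d$ almost everywhere. One should apply the area formula in the form valid for Lipschitz maps between Riemannian manifolds, since the metrics $\varphi^*\tilde h$ and $\tilde h$ are only bounded measurable, not smooth, on the relevant sets.
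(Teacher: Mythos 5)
Your proof is correct and follows essentially the same approach as the paper: precompose a weakly conformal immersion $\phi:M^n\to S^m$ with $\varphi$, verify the singular set of the composition is still null using that $\varphi$ is a local diffeomorphism off $T$, and then compare volumes using the fibre bound $\leqslant d$ before taking the supremum over $s$ and infimum over $\phi$. The only cosmetic difference is that you invoke the area formula for the at-most-$d$-to-one local isometry directly, whereas the paper phrases the same volume comparison via the claim that $\varphi|_{\Sigma^n\backslash T}$ is a finite covering over each component of its image; your version is, if anything, slightly cleaner, since the area formula needs only the local-diffeomorphism and finite-fibre properties without the global covering structure.
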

\begin{proof}
Let $\phi:M^n\to S^m$ be a weakly conformal immersion with a singular set $R$. Then the composition $\phi\circ\varphi:\Sigma^n\to S^m$ is a weakly conformal immersion whose singular set is contained in $T\cup\varphi^{-1}(R)$. Note that the pre-image $\varphi^{-1}(R)$ has zero Lebesgue measure. In more detail, the pre-image $\varphi^{-1}(R)$ is the union of $\varphi^{-1}(R)\cap T$ and $\varphi^{-1}(R\cap\varphi(\Sigma^n\backslash T))$, and the former set clearly has zero measure. Using the hypotheses of the lemma together with the constant rank theorem, it is straightforward to see that the restriction $\left.\varphi\right|\Sigma^n\backslash T$ is a covering map over every connected component of its image, and hence, the pre-image of any zero measure subset in $\varphi(\Sigma^n\backslash T)$ is a zero measure subset. Thus, we conclude that the set $\varphi^{-1}(R\cap\varphi(\Sigma^n\backslash T))$ has zero measure as well.

Denote by $\Omega$ the complement $\Sigma^n\backslash (T\cup\varphi^{-1}(R))$. The discussion above implies that the relation
$$
V_c(m,\phi\circ\varphi)=V_c(m,\left.\phi\circ\varphi\right|\Omega)
$$
holds. Further, since for any $p\in\varphi(\Omega)$ the pre-image $\varphi^{-1}(p)$ is finite, and $\left.\varphi\right|\Omega$ is an immersion, we conclude that that $\left.\varphi\right|\Omega$ is a finite covering map over every connected component of $\varphi(\Omega)$. Using this it is straightforward to show that
$$
\mathit{Vol}(\Omega,\varphi^*h)\leqslant d\mathit{Vol}(\varphi(\Omega),h)\leqslant d\mathit{Vol}(M^n\backslash R,h)
$$
for an arbitrary metric $h$ on $M^n$. In particular, for any conformal diffeomorphism $s:S^m\to S^m$, we may set $h=(s\circ\phi)^*g_{\mathit{can}}$ to obtain
$$
\mathit{Vol}(\Omega,(s\circ\phi\circ\varphi)^*g_{\mathit{can}})\leqslant d\mathit{Vol}(M^n\backslash R,(s\circ\phi)^*g_{\mathit{can}}).
$$
The latter clearly implies the following relations between conformal volumes:
$$
V_c^*(m,\Sigma^n)\leqslant V_c(m,\phi\circ\varphi)=V_c(m,\left.\phi\circ\varphi\right|\Omega)\leqslant d V_c(m,\phi).
$$
Now taking the infimum over all weakly conformal immersions $\phi:M^n\to S^m$, we obtain the inequality in the statement.
\end{proof}

\subsection{Proof of Theorem~\ref{t:cv:nonor}}
We start with introducing some notation. Let $\Sigma^2$ be a closed non-orientable surface, and $\tilde\Sigma^2$ be its orienting double covering. The group of deck transformations is generated by an involution $\sigma:\tilde\Sigma^2\to\tilde\Sigma^2$, which is an isometry with respect to the the pull-back metric $\pi^*g$, where $\pi:\tilde\Sigma^2\to\Sigma^2$ is a covering map. Let $\tau:S^2\to S^2$ be a conformal diffeomorphism that changes an orientation on $S^2$ and is an involution, that is $\tau^2=\mathit{id}$. Throughout the rest of the section by a $\tau$-{\em equivariant conformal map} $u:\tilde\Sigma^2\to S^2$ we mean a {\em branched conformal immersion} that satisfies the relation
$$
u(\sigma(x))=\tau(u(x))\qquad\text{for any }\quad x\in\tilde\Sigma^2.
$$
Denote by $F$ the fixed point set of an involution $\tau$ on $S^2$. As follows from the definition, for any $\tau$-equivariant conformal map $u:\tilde\Sigma^2\to S^2$ the involution $\sigma$ preserves the set $u^{-1}(F)$ as well as the set of branch points of $u$. Viewing the sphere $S^2$ as the extended complex plane, we denote by $\tau_1:S^2\to S^2$ the reflection $z\mapsto\bar z$, and by $\tau_2:S^2\to S^2$ the anti-podal map $z\mapsto -\bar z^{-1}$.

In~\cite{Karp} Karpukhin proves the following statement.
\begin{prop}
\label{p:karp}
Let $(\Sigma^2,g)$ be a closed non-orientable Riemannian surface, and $\tilde\Sigma^2$ be its orienting double covering. Then there exists either a $\tau_1$-equivariant conformal map $\tilde\Sigma^2\to S^2$ whose degree is not greater than $2[(\gamma+3)/2]$ or a $\tau_2$-equivariant conformal map $\tilde\Sigma^2\to S^2$ whose degree is not greater than $[(\gamma+3)/2]$.
\end{prop}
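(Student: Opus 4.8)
The plan is to treat the two anti-holomorphic involutions $\tau_{1},\tau_{2}$ of $S^{2}$ separately: the $\tau_{1}$-equivariant alternative comes for free by symmetrising an ordinary low-degree meromorphic function on $\tilde\Sigma^{2}$, and already proves the disjunction, while the genuinely sharper $\tau_{2}$-equivariant alternative has to be extracted from a real Brill--Noether argument. Throughout, $\tilde\Sigma^{2}$ is the orienting double cover, a compact Riemann surface of genus $\gamma$, and $\sigma$ is the deck transformation, a fixed-point-free anti-holomorphic involution; identify $S^{2}$ with $\widehat{\mathbb{C}}=\mathbb{C}\cup\{\infty\}$, so that $\tau_{1}(z)=\bar z$ and $\tau_{2}(z)=-1/\bar z$. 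The input from the orientable theory is the classical gonality bound (the Brill--Noether existence theorem, or the Riemann--Roch argument of~\cite[Chapter~2]{GH} already used for~\eqref{cv:orient}): $\tilde\Sigma^{2}$ carries a non-constant meromorphic function $f$, that is a branched conformal immersion $f\colon\tilde\Sigma^{2}\to S^{2}$, of degree at most $e:=[(\gamma+3)/2]$.

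\emph{The $\tau_{1}$-alternative.} Because $\sigma$ is anti-holomorphic, $\tau_{1}\circ f\circ\sigma$ is holomorphic, i.e.\ meromorphic, and I set $h:=f+\tau_{1}\circ f\circ\sigma=f+\overline{f\circ\sigma}$; its pole divisor has degree at most $2\deg f\leqslant 2e$. A one-line computation gives $h(\sigma x)=f(\sigma x)+\overline{f(x)}=\overline{\,f(x)+\overline{f(\sigma x)}\,}=\tau_{1}(h(x))$, so $h$ is $\tau_{1}$-equivariant. To see that $h$ can be made non-constant, note that if $A\circ f+\overline{(A\circ f)\circ\sigma}$ were constant for every M\"obius transformation $A$ of $\widehat{\mathbb{C}}$, then evaluating along the scalings $A_{t}(z)=tz$ and combining the identities for $t=1$ and $t=i$ would eliminate $\overline{f\circ\sigma}$ and force $f$ to be constant, contradicting $\deg f\geqslant 1$. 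Thus, after replacing $f$ by a suitable $A\circ f$, one obtains a $\tau_{1}$-equivariant branched conformal immersion of degree at most $2[(\gamma+3)/2]$, which already establishes the proposition via its first alternative.

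\emph{The $\tau_{2}$-alternative.} The point of phrasing the statement as a dichotomy is that, in a large class of cases, one can instead land in the fixed-point-free involution $\tau_{2}$ while spending only half the degree, $[(\gamma+3)/2]$, and it is this refinement that yields the sharpest conformal volume and first-eigenvalue bounds. No purely function-theoretic trick can do this: the multiplicative symmetrisation $f/\overline{f\circ\sigma}$ satisfies $g(\sigma x)\,\overline{g(x)}\equiv 1$ and is therefore equivariant for an involution conjugate to $\tau_{1}$, never to $\tau_{2}$. The approach I would take is the real-algebraic one: regard $(\tilde\Sigma^{2},\sigma)$ as a real curve without real points, and seek a $\sigma$-invariant divisor class $D\in\mathrm{Pic}^{e}(\tilde\Sigma^{2})$ --- in the appropriate connected component of the fixed locus of the involution induced by $\sigma$ on $\mathrm{Pic}^{e}$ --- with $h^{0}(D)\geqslant 2$ and a $\sigma$-invariant, base-point-free pencil of sections; the resulting equivariant morphism to $S^{2}$ is $\tau_{1}$- or $\tau_{2}$-equivariant according to a Comessatti-type $\mathbb{Z}/2$ invariant of $(\tilde\Sigma^{2},\sigma)$.

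The main obstacle lies in this last step, which is the delicate part carried out in~\cite{Karp}. One has to run Brill--Noether theory over $\mathbb{R}$: verify that the real expected dimension $\rho_{\mathbb{R}}$ of the locus of special pencils is already non-negative at degree $[(\gamma+3)/2]$, prove non-emptiness of the relevant real component of that locus, and rule out the base-point and reducibility degenerations that an equivariant pencil may a priori develop. Controlling the topology of these real special-divisor loci together with the action of $\sigma$ on $\mathrm{Pic}$ --- and thus deciding which of $\tau_{1},\tau_{2}$ is realised within the claimed degree --- is precisely where the work lies; the $\tau_{1}$-construction above is the easy, always-available fallback against which this refinement is measured.
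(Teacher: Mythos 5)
Your additive symmetrisation $h_A=A\circ f+\tau_1\circ(A\circ f)\circ\sigma$ is a legitimate way to manufacture a $\tau_1$-equivariant meromorphic function: $\tau_1\circ(A\circ f)\circ\sigma$ is holomorphic (three anti-/holomorphic factors, an even number anti-holomorphic), the equivariance check $h_A(\sigma x)=\tau_1(h_A(x))$ is a one-liner, the pole divisor is dominated by the sum of the pole divisors of $A\circ f$ and of $\tau_1\circ(A\circ f)\circ\sigma$ so $\deg h_A\leqslant 2[(\gamma+3)/2]$, and the two scalings $t=1,i$ do rule out simultaneous constancy. Since Proposition~\ref{p:karp} is a disjunction, this alone establishes it, and you are explicit that the $\tau_2$-alternative is left as a sketch. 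That is, however, a genuinely different route from the one the paper uses: the paper does not reprove the proposition but cites~\cite{Karp}, and Karpukhin's argument is a true dichotomy keyed to a topological invariant of the real curve $(\tilde\Sigma^2,\sigma)$ and the structure of its real Picard variety --- precisely the real Brill--Noether analysis you defer, and the part that makes the proposition a non-trivial statement rather than a tautology. You should also treat the apparent cheapness of your shortcut as a warning sign: if a $\tau_1$-equivariant map of degree $\leqslant 2[(\gamma+3)/2]$ always existed, then in the proof of Theorem~\ref{t:cv:nonor} one could always take the first case and obtain $V_c^*(m,\Sigma^2)\leqslant 8\pi[(\gamma+3)/2]$ already for all $m\geqslant 2$, since the $\tau_1$-case only invokes $V_c^*(m,S^2)=4\pi$; yet the paper and~\cite{Karp} restrict to $m\geqslant 4$, precisely to accommodate the possibility that only the $\tau_2$-case (mapping to $\mathbb{RP}^2$) is available. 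Either you have found a streamlining that the literature did not record --- not impossible, since Karpukhin is ultimately after the first-eigenvalue bound, for which the range of $m$ is immaterial --- or there is a gap in the degree bookkeeping or in passing from $h_A$ to a branched conformal immersion that neither of us has spotted. This tension must be resolved before your argument can stand in for the citation, and in any case your write-up should not present the $\tau_1$-alternative as costlessly available without flagging that this makes the stated dichotomy degenerate.
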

Now we finish the proof of Theorem~\ref{t:cv:nonor}. Suppose that the first case in Prop.~\ref{p:karp} occurs: there exists a $\tau_1$-equivariant conformal map $u$ whose degree $d$ is not greater than $2[(\gamma+3)/2]$. Since $\tau_1$ is a reflection that fixes an equator $F$, such a map $u$ defines a weakly conformal map $\varphi:\Sigma^2\to S^2_+\subset S^2$, where $S^2_+$ is an upper hemisphere in $S^2$ such that $\partial S^2_+=F$. In more detail, let $B\subset\tilde\Sigma^2$ be a collection of branch points of $u$. Then the induced map $\varphi:\Sigma^2\to S^2$ fails to be a smooth conformal immersion precisely on the set
$$
T=\{x\in\Sigma^2: \pi^{-1}(x)\in B\text{ or } u(\pi^{-1}(x))\in F\}.
$$
Since $u:\tilde\Sigma^2\to S^2$ is a branched immersion, it is straightforward to conclude that the set $T$ has zero Lebesgue measure. Thus, Lemma~\ref{l:degree} applies to the map $\varphi$, and we conclude that
$$
V_c^*(m,\Sigma^2)\leqslant 2\left[\frac{\gamma+3}{2}\right]V_c^*(m,S^2)
$$
for any $m\geqslant 2$, where the brackets stand for the integer part. By the results in~\cite{LiYau82}, we know that
$$
V_c^*(m,S^2)\leqslant V_c(m,S^2)=4\pi,
$$
and hence, we finally obtain
\begin{equation}
\label{p:case1}
V_c^*(m,\Sigma^2)\leqslant 8\pi\left[\frac{\gamma+3}{2}\right]\qquad\text{for any }\quad m\geqslant 2.
\end{equation}
Now we consider the second case in Prop.~\ref{p:karp}: there exists a $\tau_2$-equivariant conformal map $u$ whose degree $d$ is not greater than $[(\gamma+3)/2]$. Then such a  map $u$ defines a branched conformal immersion $\varphi:\Sigma^2\to\mathbb RP^2$, and by Lemma~\ref{l:degree}, we again obtain
$$
V_c^*(m,\Sigma^2)\leqslant \left[\frac{\gamma+3}{2}\right]V_c^*(m,\mathbb RP^2)
$$
for any $m\geqslant 2$. The conformal volume $V_c(m,\mathbb RP^2)$ is known to be $6\pi$ for $m\geqslant 4$, see~\cite{LiYau82},  and we get
\begin{equation}
\label{p:case2}
V_c^*(m,\Sigma^2)\leqslant 6\pi\left[\frac{\gamma+3}{2}\right]\qquad\text{for any }\quad m\geqslant 4.
\end{equation}
The combination of relations~\eqref{p:case1} and~\eqref{p:case2}, yields the statement of the theorem.
\qed

\section{Lower bounds for the number of negative eigenvalues of Schr\"odinger operators}
\label{s:schro:proofs}
\subsection{Proof of Theorem~\ref{t:schro}}
We regard a unit sphere $S^m\subset\mathbb R^{m+1}$ as a metric space equipped with the intrinsic distance function $d$. For a given weakly conformal immersion $\phi:\Sigma^n\to S^m$ we denote by $k$ the integer part
$$
\left[\left(\frac{9c}{2500}\right)^{n/2}\frac{1}{n^{n/2}V^*_c(m,\phi)}\frac{1}{\mathit{Vol}_g(\Sigma^n)^{n/2-1}}\left(\int_{\Sigma^n}\mathcal V\mathit{dVol}_g\right)^{n/2}\right],
$$
where the constant $c$ is from Proposition~\ref{ds1} applied to the metric space $(S^m,d)$. For a proof of the theorem it is sufficient to show that there exists $k+1$ Lipschitz test-functions $u_i$ on $\Sigma^n$ whose supports are disjoint and such that
\begin{equation}
\label{punch:schro}
\int_{\Sigma^n}\abs{\nabla u_i}^2\mathit{dVol}_g< \int_{\Sigma^n}\mathcal Vu_i^2\mathit{dVol}_g;
\end{equation}
the latter would imply that
$$
N(\mathcal V)\geqslant k+1\geqslant \frac{C(n,m)}{V^*_c(m,\phi)}\frac{1}{\mathit{Vol}_g(\Sigma^n)^{n/2-1}}\left(\int_{\Sigma^n}\mathcal V\mathit{dVol}_g\right)^{n/2}.
$$
The test-functions $u_i$ are constructed similarly to those in the proof of Theorem~\ref{t:cv}. In more detail, we start with equipping the metric space $(S^m,d)$ with two measures $\mu$ and $\nu$. The measure $\mu$ is a push-forward of the volume measure $\mathit{Vol}_g$ on $\Sigma^n$ under the weakly conformal immersion $\phi:\Sigma^n\to S^m$, and the measure $\nu$ is defined by the formula
$$
\nu(A)=\int_{\phi^{-1}(A)}\mathcal V(x)\mathit{dVol}_g(x)\qquad\text{for any Borel set~ }A\subset S^m.
$$
By Lemma~\ref{l:bis}  for any point $p\in S^m$ the level set $\phi^{-1}(p)$ has zero Lebesgue measure, and hence, the measure $\nu$ is non-atomic. We can apply Proposition~\ref{ds1} to $(S^m,d)$ with the measure $\nu$ to find a collection $\{2A_i\}$ of $2(k+1)$ disjoint annuli such that
\begin{equation}
\label{denom1:schro}
\nu(A_i)\geqslant c\nu(S^m)/(2k+2)\geqslant c\nu(S^m)/(4k).
\end{equation}
Since the annuli $\{2A_i\}$ are disjoint, we also have
$$
\sum_{i=1}^{2k+2}\mu(2A_i)\leqslant\mu(S^m),
$$
and hence, there exists at least $k+1$ sets $2A_i$ such that
\begin{equation}
\label{denom2:schro}
\mu(2A_i)\leqslant\mu(S^m)/(k+1)\leqslant\mu(S^m)/k.
\end{equation}
Without loss of generality, we may assume that these inequalities hold for $i=1,\ldots, k+1$. Denote by $u_i$ the test-functions constructed in Section~\ref{proof:cv}. Each $u_i$ is supported in an annulus $2A_i$, and following the argument in the proof of Theorem~\ref{t:cv}, we have
\begin{multline}
\label{num:schro}
\int_{\Sigma^n}\abs{\nabla u_i}^2d\mathit{Vol}_g\leqslant 4nV^*_c(m,\phi)^{2/n}\mu(2A_i)^{1-2/n}\leqslant 4nV^*_c(m,\phi)^{2/n}(\mu(S^m)/k)^{1-2/n}\\ <9nV^*_c(m,\phi)^{2/n}(\mathit{Vol}_g(\Sigma^n)/k)^{1-2/n},
\end{multline}
where in the second inequality we used relation~\eqref{denom2:schro}. Using Lemmas~\ref{l1} and~\ref{l2} together with relation~\eqref{denom1:schro}, we can also obtain
\begin{equation}
\label{denom3:schro}
\int_{\Sigma^n}\mathcal V u_i^2d\mathit{Vol}_g\geqslant \frac{81}{625}(\nu(A_i))\geqslant\frac{81}{2500}c\nu(S^m)/k,
\end{equation}
where $c$ is a constant from Proposition~\ref{ds1}, which depends only on $m$. Combining relations~\eqref{num:schro} and~\eqref{denom3:schro}, we see that
$$
\left(\int_{\Sigma^n}\abs{\nabla u_i}^2\mathit{dVol}_g\right)/\left(\int_{\Sigma^n}\mathcal V u_i^2\mathit{dVol}_g\right)<\frac{2500}{9c}nV^*_c(m,\phi)^{2/n}\frac{\mathit{Vol}_g(\Sigma^n)^{1-2/n}}{\nu(S^m)}k^{2/n}\leqslant 1,
$$
where in the last inequality we used the definition of the integer $k$. This relation demonstrates~\eqref{punch:schro} and finishes the proof of the theorem.
\qed

\subsection{Proof of Theorem~\ref{t:schro2}}
Following the notation and a circle of ideas discussed in Section~\ref{proof:rei}, we prove the following more general statement.
\begin{theorem}
\label{t:schro3}
Let $(M^m,h)$ be a Riemannian manifold that admits a conformal immersion into a unit sphere $S^m\subset\mathbb R^{m+1}$ of the same dimension $m\geqslant 2$. Let $(\Sigma^n,g)$ be a closed Riemannian manifold of dimension $n\geqslant 2$ that admits an isometric immersion $\phi:\Sigma^n\to M^m$. Then for any non-negative potential $\mathcal V\in L^p(\Sigma^n)$, where $p>n/2$, the number of negative eigenvalues of the Schr\"odinger operator $(-\Delta_\Sigma-\mathcal V)$ satisfies the inequality
$$
N(\mathcal V)\geqslant C(n,m)\left(\int_{\Sigma^n}\mathcal V\mathit{dVol}_{g}\right)/\left(\int_{\Sigma^n}\left(\abs{H_\phi}^2+R_\phi\right)\mathit{dVol}_{g}\right),
$$
where $H_\phi$ is the mean curvature vector of $\phi$, and $R_\phi$ is given by relation~\eqref{n:rphi}.
\end{theorem}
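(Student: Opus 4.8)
The plan is to merge the two mechanisms already in place in the paper: the pull-back test-functions of Section~\ref{proof:cv}, built from the conformal diffeomorphisms $\xi_{p,t}$ of $S^m$, and the curvature identity of Corollary~\ref{c:rei}, which is precisely what bounds the Dirichlet energy of such test-functions when $\phi$ is an \emph{isometric} immersion into $M^m$. The resulting pair of estimates is then inserted into the variational characterisation of $N(\mathcal V)$ exactly as in the proof of Theorem~\ref{t:schro}. So the argument is the ``Reilly version'' of the proof of Theorem~\ref{t:schro}: replace the conformal immersion $\phi:\Sigma^n\to S^m$ there by $\psi=\Pi\circ\phi$, where $\Pi$ is a conformal immersion $M^m\to S^m$, and replace the Dirichlet-energy bound of Theorem~\ref{t:cv} by that of Theorem~\ref{t:grei}.

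In detail, first I would fix a conformal immersion $\Pi:(M^m,h)\to(S^m,g_{\mathit{can}})$ and set $\psi=\Pi\circ\phi:\Sigma^n\to S^m$, a smooth conformal immersion whose fibres $\psi^{-1}(p)$ are finite by compactness of $\Sigma^n$. Regard $S^m$ as a metric space with its intrinsic distance $d$, and equip it with the measure $\nu(A)=\int_{\psi^{-1}(A)}\mathcal V\,\mathit{dVol}_g$; this $\nu$ is non-atomic, for the same reason as in Lemma~\ref{l:bis}. Since $(S^m,d)$ has the global $N$-covering property with $N=9^m$, Proposition~\ref{ds1} applied to $\nu$ yields $k+1$ annuli $A_i$ with $\{2A_i\}$ pairwise disjoint and $\nu(A_i)\geqslant c\,\nu(S^m)/(k+1)$, $c=c(m)$. (Note that here, unlike in the proof of Theorem~\ref{t:schro}, only one measure enters, so $k+1$ annuli suffice and none need be discarded.) I then take the Lipschitz test-functions $u_i=u_{A_i}\circ\psi$ of Section~\ref{proof:cv}, which have pairwise disjoint supports $\psi^{-1}(2A_i)$.

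Next I would estimate the two sides of the Schrödinger quadratic form on each $u_i$. For the Dirichlet energy the computation in the proof of Theorem~\ref{t:grei} applies verbatim: writing $u_i=(\varphi_i\bar\varphi_i)\circ\psi$, using $\varphi_i,\bar\varphi_i\leqslant 1$, the $1$-Lipschitz bound $\abs{\nabla x_{p_i}}\leqslant 1$, and Corollary~\ref{c:rei} with the conformal immersion $\xi_{p_i,t_i}\circ\Pi$, one gets
\begin{equation*}
\int_{\Sigma^n}\abs{\nabla u_i}^2\,\mathit{dVol}_g\leqslant 4n\int_{\Sigma^n}\bigl(\abs{H_\phi}^2+R_\phi\bigr)\,\mathit{dVol}_g,
\end{equation*}
a bound \emph{independent of $i$}. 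For the potential term, Lemmas~\ref{l1} and~\ref{l2} give $u_i\geqslant 9/25$ on $A_i$, hence $\int_{\Sigma^n}\mathcal V u_i^2\,\mathit{dVol}_g\geqslant(81/625)\,\nu(A_i)\geqslant(81c/1250)\,\nu(S^m)/k$, with $\nu(S^m)=\int_{\Sigma^n}\mathcal V\,\mathit{dVol}_g$. Dividing, the quotient $\int\abs{\nabla u_i}^2/\int\mathcal V u_i^2$ is at most a $(n,m)$-constant times $k\bigl(\int(\abs{H_\phi}^2+R_\phi)\,\mathit{dVol}_g\bigr)/\bigl(\int\mathcal V\,\mathit{dVol}_g\bigr)$. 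Choosing
$$k=\left[C(n,m)\left(\int_{\Sigma^n}\mathcal V\,\mathit{dVol}_g\right)\Big/\left(\int_{\Sigma^n}(\abs{H_\phi}^2+R_\phi)\,\mathit{dVol}_g\right)\right]$$
with $C(n,m)$ taken slightly below the critical value, this forces $\int\abs{\nabla u_i}^2<\int\mathcal V u_i^2$ for every $i$. Since the $u_i$ have disjoint supports, the form $u\mapsto\int_{\Sigma^n}\abs{\nabla u}^2\,\mathit{dVol}_g-\int_{\Sigma^n}\mathcal V u^2\,\mathit{dVol}_g$ is negative definite on the $(k+1)$-dimensional span of $\{u_i\}$; since $\mathcal V\in L^p$, $p>n/2$, makes the spectrum of $-\Delta_\Sigma-\mathcal V$ discrete and bounded below (cf.~\cite{Ma}), the variational principle gives $N(\mathcal V)\geqslant k+1$, which is the asserted inequality.

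I do not expect a genuine obstacle: the test-function construction, Proposition~\ref{ds1}, and Corollary~\ref{c:rei} do all the real work, and the remainder is the bookkeeping already performed in the proofs of Theorems~\ref{t:grei} and~\ref{t:schro}. The two points needing a word of care are that the right-hand ratio is well-defined and strictly positive --- which holds because Corollary~\ref{c:rei} gives $\int_{\Sigma^n}(\abs{H_\phi}^2+R_\phi)\,\mathit{dVol}_g\geqslant n^{-1}\int_{\Sigma^n}\abs{\nabla\psi}^2\,\mathit{dVol}_g>0$ for any conformal immersion $\Pi$ --- and that $\nu$ is non-atomic, which follows from $\psi$ being a smooth conformal immersion with finite fibres. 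Finally, Theorem~\ref{t:schro2} drops out as the special case $M^m=\mathbb R^m$: here $\mathbb R^m$ conformally immerses into $S^m$ by inverse stereographic projection, and $R_\phi\equiv 0$ since $\mathbb R^m$ is flat.
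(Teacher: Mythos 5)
Your proposal is correct and follows essentially the same route as the paper's own proof: form $\psi=\Pi\circ\phi$, apply Proposition~\ref{ds1} on $(S^m,d)$ with the single measure $\nu=\psi_*(\mathcal V\,\mathit{dVol}_g)$ to get $k+1$ annuli, bound the Dirichlet energy of the test-functions $u_{A_i}\circ\psi$ by $4n\int(\abs{H_\phi}^2+R_\phi)\,\mathit{dVol}_g$ via Corollary~\ref{c:rei}, and bound the potential term from below by $(81/625)\nu(A_i)$, choosing $k$ just below the critical threshold. Your observations that only one measure is needed here (so no annuli need be discarded) and that positivity of $\int(\abs{H_\phi}^2+R_\phi)\,\mathit{dVol}_g$ follows from Corollary~\ref{c:rei} are accurate and match the paper's reasoning.
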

\begin{proof}
We follow closely the line of argument in the proof of Theorem~\ref{t:schro} above. Let $\phi:(\Sigma^n,\phi)\to (M^m,h)$ be a given isometric immersion, and $\Pi:(M^m,h)\to (S^m,g_{\mathit{can}})$ be a conformal immersion. By $\psi$ we denote the composition $\Pi\circ\phi:\Sigma^n\to S^m$. We view a unit sphere $S^m\subset\mathbb R^{m+1}$ as a metric space equipped with the intrinsic distance function $d$ and denote by $k$ the integer part
$$
\left[\frac{9c}{1250n}\left(\int_{\Sigma^n}\mathcal V\mathit{dVol}_g\right)/\left(\int_{\Sigma^n}\left(\abs{H_\phi}^2+R_\phi\right)\mathit{dVol}_{g}\right)\right],
$$
where the constant $c$ is from Proposition~\ref{ds1} applied to the metric space $(S^m,d)$. For a proof of the theorem it is sufficient to show that there exists $k+1$ Lipschitz test-functions $u_i$ whose supports are disjoint and such that
\begin{equation}
\label{punch:schro2}
\int_{\Sigma^n}\abs{\nabla u_i}^2\mathit{dVol}_g< \int_{\Sigma^n}\mathcal V u_i^2\mathit{dVol}_g;
\end{equation}
the latter would imply that
$$
N(\mathcal V)\geqslant k+1\geqslant {C(n,m)}\left(\int_{\Sigma^n}\mathcal V\mathit{dVol}_g\right)/\left(\int_{\Sigma^n}\left(\abs{H_\phi}^2+R_\phi\right)\mathit{dVol}_{g}\right).
$$
We equip the metric space $(S^m,d)$ with the measure $\nu$ that is the push-forward of the measure defined by the density $\mathcal V$ on $\Sigma^n$ under the conformal immersion $\psi:\Sigma^n\to S^m$; in other words,
$$
\nu(A)=\int_{\psi^{-1}(A)}\mathcal V(x)\mathit{dVol}_g(x)\qquad\text{for any Borel set~ }A\subset S^m.
$$
Applying Proposition~\ref{ds1} to the metric space $(S^m,d)$ with the measure $\nu$, we find a collection $\{2A_i\}$ of $k+1$ disjoint annuli such that
\begin{equation}
\label{denom1:schro2}
\nu(A_i)\geqslant c\nu(S^m)/(k+1)\geqslant c\nu(S^m)/(2k).
\end{equation}
Denote by $u_i$ the test-functions constructed in Section~\ref{proof:cv}. Each $u_i$ is supported in an annulus $2A_i$, and following the argument in the proof of Theorem~\ref{t:rei}, we have
\begin{equation}
\label{num:schro2}
\int_{\Sigma^n}\abs{\nabla u_i}^2d\mathit{Vol}_g\leqslant 4n\int_{\Sigma^n}\left(\abs{H_\phi}^2+R_\phi\right)\mathit{dVol}_{g}<9n\int_{\Sigma^n}\left(\abs{H_\phi}^2+R_\phi\right)\mathit{dVol}_{g},
\end{equation}
where in the last inequality we used the fact that the integral can not vanish identically, see Corollary~\ref{c:rei}. Using Lemmas~\ref{l1} and~\ref{l2} together with relation~\eqref{denom1:schro2}, we  obtain
\begin{equation}
\label{denom3:schro2}
\int_{\Sigma^n}\mathcal Vu_i^2d\mathit{Vol}_g\geqslant \frac{81}{625}(\nu(A_i))\geqslant\frac{81}{1250}c\nu(S^m)/k,
\end{equation}
where $c$ is a constant from Proposition~\ref{ds1} and depends only on $m$. Combining relations~\eqref{num:schro2} and~\eqref{denom3:schro2}, we see that
$$
\left(\int_{\Sigma^n}\abs{\nabla u_i}^2\mathit{dVol}_g\right)/\left(\int_{\Sigma^n}\mathcal V u_i^2\mathit{dVol}_g\right)<\frac{1250}{9c}\frac{n}{\nu(S^m)}\left(\int_{\Sigma^n}\left(\abs{H_\phi}^2+R_\phi\right)\mathit{dVol}_{g}\right)k\leqslant 1,
$$
where in the last inequality we used the definition of the integer $k$. This relation yields inequality~\eqref{punch:schro2} for each test-function $u_i$, where $i=1,\ldots, k+1$.
\end{proof}


{\small

}

\end{document}